\documentclass[12pt, a4paper,leqno,oneside]{amsart}
\usepackage{color}
\usepackage{amsmath}
\usepackage{amssymb}

\usepackage{tikz}
\usepackage{pgfplots}
\usepackage{enumerate}

\usepackage{amsthm}

\usepackage{graphicx}
\usepackage{courier}

\usepackage[colorlinks=true, pdfstartview=FitV, linkcolor=blue, citecolor=blue, urlcolor=blue,pagebackref=false]{hyperref}

\usepackage{bbm}

\usepackage{psfrag}

\usepackage{wasysym}

\usepackage{fullpage}

\pagestyle{plain}

\marginparwidth = 57pt

\usepackage{type1cm}

\newtheorem{theorem}{Theorem}[section]
\newtheorem{corollary}[theorem]{Corollary}
\newtheorem{proposition}[theorem]{Proposition}
\newtheorem{lemma}[theorem]{Lemma}

\numberwithin{equation}{section}

\theoremstyle{definition}
\newtheorem{definition}[theorem]{Definition}
\theoremstyle{remark}
\newtheorem{remark}[theorem]{Remark}

\renewcommand{\leq}{\leqslant}                   
\renewcommand{\geq}{\geqslant}                   

\DeclareMathOperator\Exp{Exp}

\def\d{\mathrm{d}}
\def\m{m}

\def\Z{\mathbb{Z}}
\def\R{\mathbb{R}}

\newcommand{\1}[1]{{\mathbbm{1}}_{#1}}

\overfullrule 1mm

\usepackage{titlesec}
\titleformat{\section}{\Large\bfseries}{\thesection}{1em}{}
\titleformat{\subsection}{\bfseries}{\thesubsection}{1em}{}

\usepackage{array}
\newcolumntype{e}{>{\displaystyle}r @{\,} >{\displaystyle}c @{\,} >{\displaystyle}l}

  \newcounter{constant}
  \newcommand{\newconstant}[1]{\refstepcounter{constant}\label{#1}}
  \newcommand{\useconstant}[1]{c_{\textnormal{\tiny \ref{#1}}}}
  \setcounter{constant}{-1}

\def\clap#1{\hbox to 0pt{\hss#1\hss}}

\def\mathclap{\mathpalette\mathclapinternal}

\def\mathclapinternal#1#2{\clap{$\mathsurround=0pt#1{#2}$}}

\usepackage{calc}


\def\arraypar#1{\parbox[c]{\textwidth - 2cm}{\centering #1}}

\usepackage{environ}
\NewEnviron{display}{
\begin{equation}\begin{array}{c}
  \arraypar{\BODY}
\end{array}\end{equation}
}

\newcommand{\mcup}{\textstyle \bigcup\limits}
\newcommand{\motimes}{\textstyle \bigotimes\limits}

\usepackage{xfrac}


\title{\mbox{} \\ \LARGE \usefont{T1}{tnr}{b}{n} \selectfont A\lowercase{bsorbing-state} \lowercase{transition} \lowercase{for} S\lowercase{tochastic} S\lowercase{andpiles} \lowercase{and} A\lowercase{ctivated} R\lowercase{andom} W\lowercase{alks}\\
\mbox{}
}

\author{\large \itshape S\lowercase{idoravicius, }V.$^1$ \itshape \qquad \quad T\lowercase{eixeira, }A.$^1$ \\ \mbox{}}
\address{$^1$ Instituto Nacional de Matem\'atica Pura e Aplicada --
IMPA,\newline Estrada Dona Castorina 110, 22460-320, Rio de Janeiro RJ, Brazil\newline
e-mail: {\itshape \texttt{augusto@impa.br}}\newline
e-mail: {\itshape \texttt{vladas@impa.br}}
}

\date{\today}

\begin{document}
\begin{abstract}
  We study the dynamics of two conservative lattice gas models on the infinite $d$-dimensional hypercubic lattice: the Activated Random Walks (ARW) and the Stochastic Sandpiles Model (SSM), introduced in the physics literature in the early nineties.
  Theoretical arguments and numerical analysis predicted that the ARW and SSM undergo a phase transition between an \emph{absorbing phase} and an \emph{active phase} as the initial density crosses a critical threshold.
  However a rigorous proof of the existence of an absorbing phase was known only for one-dimensional systems.
  In the present work we establish the existence of such phase transition in any dimension.
  Moreover, we obtain several quantitative bounds for how fast the activity ceases at a given site or on a finite system.
  The multi-scale analysis developed here can be extended to other contexts providing an efficient tool to study non-equilibrium phase transitions.
\end{abstract}

\maketitle

Mathematics Subject Classification (2000) 60K35, 82C20, 82C22, 82C26

\renewcommand\footnotemark{}
\renewcommand\footnoterule{}
\let\thefootnote\relax\footnotetext{{\bf Keywords} Particle systems, absorption, sandpiles, non-equilibrium phase transitions}

\section{Introduction}

Models of avalanches in sandpiles introduced in \cite{MR949160} became a paradigm example of a phenomenon called self-organized criticality.
In contrast with typical statistical mechanics systems, models of this type are not equipped with a tuning parameter such as temperature, for which a phase transition is observed.
Instead, they are expected to naturally drive themselves to a critical equilibrium distribution, featuring power law statistics.
For a mathematical overview of this model see \cite{MR2581895}.

Since its introduction, the subject has attracted much attention and several related models have been studied (e.g. Abelian Networks, Stochastic Sandpiles, Activated Random Walkers, etc.), see for instance \cite{0305-4470-24-7-009}, \cite{MR2220576} and \cite{2013arXiv1309.3445B}.
These particle systems share some similar features and, in particular, present a delicate balance between \emph{activation} and \emph{stabilization}  of particles.

To be more concrete, let us briefly describe one of the models we analyze here: the Stochastic Sandpile Model (SSM).
The SSM is continuous time particle system on a finite or infinite graph, where every site $x$ contains a number of particles $\eta(x)$.
Let $\kappa > 0$ be a fixed integer. If $\eta(x) < \kappa$,   we say that the site is stable.
But whenever the number of particles reaches $\kappa$, the site $x$ is said to be unstable and after an exponential time it sends $\kappa$ particles to its neighboring sites uniformly at random.
This toppling rule is endowed with the mechanisms of activation and stabilization that we mentioned above.

\vspace{4mm}

{\bf Finite systems -} The original models of self organized criticality were defined on finite lattices and this requires the introduction of two additional ingredients, called \emph{driving} and \emph{dissipation}.
For sake of concreteness, suppose that the model is defined on a finite box of $\mathbb{Z}^d$.
Then, whenever a particle is sent to the exterior of the box, it is eliminated from the system.
This is called  \emph{dissipation}.
Under this rule, since the box is finite, the system reaches an absorbing state after a finite time.
At this point we add another particle uniformly over the box, a procedure called \emph{driving}.
The addition of this new particle could of course destabilize the chosen site, and the toppling of this site could destabilize some of its neighbors and so on, until a stable configuration is reached again.

The chain reaction that may be provoked by a single toppling is what causes the system to be so interesting.
These avalanches are believed to display power law distributions at equilibrium and this motivates the claim that they display a critical behavior. See \cite{MR1044086}, \cite{MR2581895} for a proof of such behavior on some specific examples.

\vspace{4mm}

{\bf Infinite systems -} In an attempt to explain how driving/dissipation models (DDM) drift themselves naturally towards criticality, Dickman, Mu\~noz, Vespignani, and Zapperi introduced in \cite{2001AIPC..574..102M} a corresponding conservative lattice gas model, called Fixed Energy Sandpile (FES).

The FES model is defined on the infinite lattice $\mathbb{Z}^d$, starting with an i.i.d. initial distribution of particles, with density $\zeta > 0$.
However, in the FES model the number of particles is conserved, meaning that the driving and dissipation mechanisms are not present.
The system evolves in continuous time, with unstable sites toppling at constant rate.
Simulations from \cite{2001AIPC..574..102M} indicate that the FES model undergoes a phase transition at a critical density $\zeta_c \in (0, \infty)$, that is ``for energy densities below $\zeta_c$, the FES-Sandpile falls into an \emph{absorbing state}, while for values above that threshold activity is indefinitely sustained, i.e.
the FES is in an \emph{active phase}.''

As we said, the main motivation behind the introduction of the FES model was to better understand the DDM sandpiles.
The reasoning behind this goes as follows.
For DDM models, particles are gradually introduced into the system, but while the density inside the box remains below $\zeta_c$, the system is in an absorbing phase and will accommodate those extra particles in its bulk, effectively increasing its density.
On the other hand, as long as the system's density is above $\zeta_c$, activity would be sustained and the dissipation at the boundary will bring the density down.
This explanation found support for instance in \cite{MR2220576}, Ch. 15.4.5. and \cite{MR2150148}.

The proposed relation between FES and DDM systems predicts in particular that $\zeta_c$ coincides with the asymptotic density $\zeta_s$, observed on finite systems at equilibrium, as the size of the box grows.
This has been verified for some exactly solvable models in $d = 1$, see \cite{MR2150148}.
However some recent works (both numerical \cite{2010PhRvL.104n5703F} and rigorous for $d = 1$ \cite{2012arXiv1211.4760F}) have found a discrepancy between these values in some models.
This indicates that the relation between FES and DDM models may be less direct, suggesting that issues such as non-ergodicity and toppling invariants \cite{2006EPJB...52...91C} may play an important role in this connection.
See also \cite{PhysRevE.84.066119} and \cite{2014arXiv1402.3283L} for a detailed description of this relation in the case of Abelian Sandpiles.
It is also important to mention that stochastic and deterministic models seem to belong to different universality classes, see \cite{2006PhRvL..96e8003D}.

\vspace{4mm}

{\bf Main results -} From a rigorous perspective, we are still far from understanding the relation between FES models and self organized criticality for stochastic models.
In fact, very little is known even about basic properties of FES models.
This article attempts to fill some of these gaps, most notably, the existence of a phase transition for some important models.

Before stating our main results, let us describe another particle system that fits into our scope and plays a central role in the present work, the so called Activated Random Walks (ARW) model. All of our results hold for both the ARW and the SSM models and could in principle be adapted to other processes, see Section~\ref{s:other_models}.

The ARW model is a special case of a class introduced by F. Spitzer in the 1970's and has since then been intensively studied, see \cite{DRS09} and references therein. The model can be described as a reaction-diffusion system, where particles can be of type $A$ or $B$ and are subject to the following transitions. Particles of type $A$ perform simple random walks with jump rates $D_A = 1$, while $B$-type particles do not move $D_B = 0$. An $A$-particle turns into a $B$-particle ($A \to B$ at rate $\lambda > 0$). On the other hand, whenever an $A$-particle jumps onto a $B$ particle, they become immediately active ($A + B \to 2A$ at infinity rate).

The first contribution of the current paper is to show that
\begin{equation}
  \begin{array}{c}
  \text{the ARW on the infinite lattice undergoes a phase transition}\\
  \text{at some $\zeta_c \in (0,\infty)$ and the same also holds true for the SSM,}
\end{array}
\end{equation}
see Theorem~\ref{t:absorption}.
To be more precise, our work shows the existence of an absorbing phase (by showing that for $\zeta$ small enough the system ceases activity), as conjectured in \cite{DRS09} and \cite{2012InMat.188..127R}. The fact that $\zeta_c < \infty$ (existence of an active phase for large $\zeta$) was proved for $d = 1$ in \cite{2012InMat.188..127R} and later for every $d \geq 1$ in \cite{MR2651824}.

Although FES seem closer to a traditional statistical mechanics problem, where a tuning parameter controls the phase transition, there are important differences that make the former harder to analyze.
The main being that FES models are intrinsically out of equilibrium.
Tools for dealing with non-equilibrium phase transitions are still incipient.

The techniques we develop to prove the above results are robust and give a clear picture of how absorption is achieved. In particular, our proofs work
\begin{itemize}
\item for different models, see Section~\ref{s:other_models},
\item for any dimension $d \geq 1$ and
\item they provide quantitative results, as described in \eqref{e:short_tails} and \eqref{e:few_exit}.
\end{itemize}
Moreover, the proofs have plenty of room for improvement and we believe that following works will elucidate these systems even further.

Let us mention some of the previous works that have been done on this subject. Some partial results in this direction have been obtained for $d = 1$  in \cite{MR2150148}, Proposition 2.2.
Existence of the absorbing phase for $d=1$ was established in \cite{2012InMat.188..127R}.
Although the proof of this one dimensional case is very instructive and elegant, it has serious obstructions to be generalized to higher dimensions.

\vspace{4mm}

As we mentioned above, the techniques developed in the current paper go beyond the mere existence of a phase transition for FES models. To illustrate this fact, we show two quantitative results concerning the \emph{expected time of absorption} and the \emph{warm-up phase of DDM models}.

Let $\tau$ be the last time of activity for the origin in either the ARW or SSM on the infinite lattice. Then for $\zeta$ small enough, we show that
\begin{equation}
  \label{e:fast_decay}
  P_\zeta [\tau > l] \leq c \exp\{ -c \log(l)^2\}, \text{ for every $l \geq 0$,}
\end{equation}
see Theorem~\ref{t:absorption}.

This result motivates the definition of
\begin{equation}
  \zeta_* = \sup_\zeta \{ E_\zeta(\tau) < \infty \},
\end{equation}
which can be seen as a strengthening of the definition of $\zeta_c$.
From \eqref{e:fast_decay}, one sees that $\zeta_* > 0$, leaving open the question of whether it equals $\zeta_c$, in analogy with the case of percolation, see \cite{Men86}.

Another interesting quantitative statement that we establish concerns the DDM type models away from equilibrium. For this, consider either the ARW or SSM on a finite box with side length $n$ in the presence of driving and dissipation at the boundary. Then, if we start from an empty configuration and if $\zeta$ is small enough,
\begin{equation}
  \begin{array}{c}
  \text{after the insertion of $\zeta n^d$ particles, no more than $n^{d - 1 + o(1)}$}\\
  \text{of them get dissipated in the boundary.}
  \end{array}
\end{equation}
See Theorem~\ref{t:dd} for a more precise statement of the above. In Figure~\ref{f:warmup} we illustrate the above result with a simulation.

\begin{figure}[ht]
  \label{f:warmup}
  \centering
  \begin{tikzpicture}[scale=1]
  \begin{axis}[title=Simulation for a $30 \times 30$ box,xlabel=Particles introduced, ylabel=Particles remaining,font=\tiny]
  \addplot[smooth] coordinates {
    (10  ,10) (20  ,20) (30  ,30) (40  ,40) (50  ,50) (60  ,60) (70  ,70) (80  ,80) (90  ,90) (100 ,100) (110 ,110) (120 ,120) (130 ,130) (140 ,140) (150 ,150) (160 ,160) (170 ,170) (180 ,180) (190 ,190) (200 ,200) (210 ,210) (220 ,220) (230 ,230) (240 ,240) (250 ,250) (260 ,260) (270 ,270) (280 ,280) (290 ,290) (300 ,300) (310 ,310) (320 ,320) (330 ,330) (340 ,340) (350 ,350) (360 ,360) (370 ,370) (380 ,380) (390 ,390) (400 ,400) (410 ,410) (420 ,420) (430 ,430) (440 ,440) (450 ,450) (460 ,460) (470 ,470) (480 ,480) (490 ,490) (500 ,500) (510 ,510) (520 ,520) (530 ,529) (540 ,539) (550 ,549) (560 ,559) (570 ,569) (580 ,579) (590 ,589) (600 ,599) (610 ,609) (620 ,619) (630 ,629) (640 ,639) (650 ,649) (660 ,659) (670 ,669) (680 ,679) (690 ,689) (700 ,699) (710 ,709) (720 ,719) (730 ,729) (740 ,739) (750 ,749) (760 ,759) (770 ,769) (780 ,779) (790 ,789) (800 ,799) (810 ,809) (820 ,819) (830 ,829) (840 ,839) (850 ,849) (860 ,859) (870 ,869) (880 ,879) (890 ,887) (900 ,897) (910 ,907) (920 ,917) (930 ,927) (940 ,935) (950 ,945) (960 ,955) (970 ,965) (980 ,975) (990 ,985) (1000,995) (1010,1005) (1020,1015) (1030,1025) (1040,1035) (1050,1045) (1060,1055) (1070,1065) (1080,1075) (1090,1085) (1100,1094) (1110,1104) (1120,1114) (1130,1124) (1140,1134) (1150,1144) (1160,1153) (1170,1160) (1180,1170) (1190,1177) (1200,1187) (1210,1197) (1220,1205) (1230,1215) (1240,1225) (1250,1235) (1260,1245) (1270,1255) (1280,1263) (1290,1273) (1300,1283) (1310,1292) (1320,1302) (1330,1312) (1340,1322) (1350,1332) (1360,1342) (1370,1352) (1380,1362) (1390,1372) (1400,1381) (1410,1391) (1420,1399) (1430,1409) (1440,1419) (1450,1428) (1460,1438) (1470,1448) (1480,1458) (1490,1468) (1500,1471) (1510,1479) (1520,1487) (1530,1497) (1540,1507) (1550,1513) (1560,1519) (1570,1524) (1580,1531) (1590,1541) (1600,1551) (1610,1551) (1620,1561) (1630,1568) (1640,1578) (1650,1587) (1660,1597) (1670,1607) (1680,1612) (1690,1620) (1700,1630) (1710,1634) (1720,1642) (1730,1652) (1740,1662) (1750,1672) (1760,1681) (1770,1691) (1780,1701) (1790,1711) (1800,1721) (1810,1731) (1820,1741) (1830,1740) (1840,1742) (1850,1750) (1860,1751) (1870,1758) (1880,1765) (1890,1774) (1900,1779) (1910,1787) (1920,1796) (1930,1806) (1940,1805) (1950,1815) (1960,1819) (1970,1760) (1980,1761) (1990,1771) (2000,1774) (2010,1774) (2020,1778) (2030,1765) (2040,1770) (2050,1776) (2060,1783) (2070,1771) (2080,1781) (2090,1791) (2100,1782) (2110,1786) (2120,1779) (2130,1785) (2140,1777) (2150,1769) (2160,1776) (2170,1782) (2180,1786) (2190,1796) (2200,1804) (2210,1796) (2220,1794) (2230,1804) (2240,1813) (2250,1804) (2260,1792) (2270,1797) (2280,1799) (2290,1804) (2300,1801) (2310,1804) (2320,1810) (2330,1819) (2340,1810) (2350,1776) (2360,1786) (2370,1792) (2380,1780) (2390,1790) (2400,1797) (2410,1806) (2420,1816) (2430,1814) (2440,1820) (2450,1827) (2460,1837) (2470,1819) (2480,1819) (2490,1829) (2500,1839) (2510,1815) (2520,1791) (2530,1800) (2540,1796) (2550,1800) (2560,1802) (2570,1808) (2580,1790) (2590,1791) (2600,1772) (2610,1771) (2620,1762) (2630,1771) (2640,1781) (2650,1783) (2660,1793) (2670,1793) (2680,1798) (2690,1776) (2700,1766) (2710,1774) (2720,1776) (2730,1785) (2740,1780) (2750,1763) (2760,1765) (2770,1772) (2780,1776) (2790,1782) (2800,1789) (2810,1799) (2820,1801) (2830,1807) (2840,1812) (2850,1797) (2860,1807) (2870,1810) (2880,1790) (2890,1798) (2900,1804) (2910,1809) (2920,1815) (2930,1819) (2940,1800) (2950,1805) (2960,1793) (2970,1803) (2980,1795) (2990,1770) (3000,1770) (3010,1767) (3020,1763) (3030,1771) (3040,1775) (3050,1785) (3060,1783) (3070,1784) (3080,1794) (3090,1799) (3100,1797) (3110,1805) (3120,1805) (3130,1804) (3140,1807) (3150,1810) (3160,1802) (3170,1790) (3180,1797) (3190,1807) (3200,1812) (3210,1814) (3220,1821) (3230,1819) (3240,1808) (3250,1806) (3260,1812) (3270,1791) (3280,1793) (3290,1802) (3300,1791) (3310,1779) (3320,1786) (3330,1761) (3340,1764) (3350,1769) (3360,1775) (3370,1781) (3380,1770) (3390,1778) (3400,1780) (3410,1782) (3420,1792) (3430,1777) (3440,1787) (3450,1786) (3460,1791) (3470,1795) (3480,1786) (3490,1769) (3500,1774) (3510,1781) (3520,1788) (3530,1788) (3540,1795) (3550,1801) (3560,1800) (3570,1810) (3580,1805) (3590,1802) (3600,1792) (3610,1802) (3620,1811) (3630,1816) (3640,1825) (3650,1800) (3660,1810) (3670,1800) (3680,1805) (3690,1804) (3700,1813) (3710,1810) (3720,1776) (3730,1785) (3740,1790) (3750,1797) (3760,1807) (3770,1799) (3780,1789) (3790,1764) (3800,1772) (3810,1764) (3820,1773) (3830,1783) (3840,1773) (3850,1765) (3860,1775) (3870,1775) (3880,1783) (3890,1792) (3900,1801) (3910,1803) (3920,1813) (3930,1816) (3940,1804) (3950,1789) (3960,1788) (3970,1794) (3980,1791) (3990,1801) (4000,1805) (4010,1803) (4020,1795) (4030,1801) (4040,1801) (4050,1800) (4060,1796) (4070,1796) (4080,1796) (4090,1785) (4100,1792) (4110,1802) (4120,1812) (4130,1812) (4140,1820) (4150,1826) (4160,1834) (4170,1840) (4180,1838) (4190,1836) (4200,1842)
   };
   \end{axis}
  \end{tikzpicture}
  \caption{Simulation of the stochastic sandpiles model on a $30 \times 30$ box.
  The box starts empty and particles are successively introduced uniformly over
  the square one by one and topple until the configuration stabilizes. Whenever a particle exits
  the box it is eliminated (dissipated). Observe that the overwhelming majority of
  particles get absorbed at the early stages of particle introduction, see
  Theorem~\ref{t:dd} for a precise statement supporting this claim.}
\end{figure}
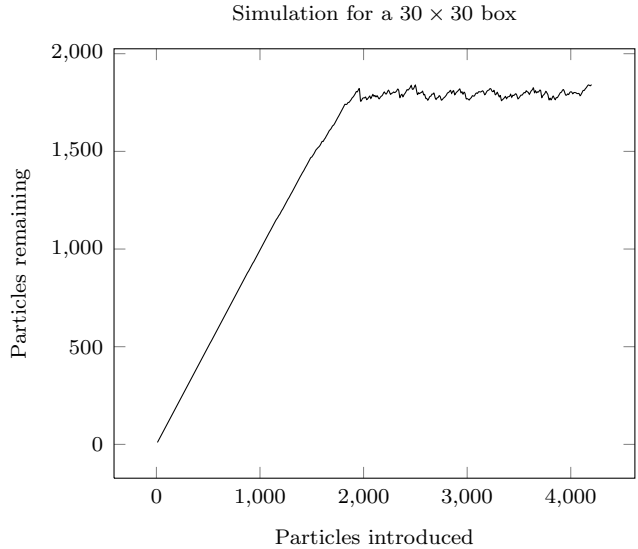

\vspace{4mm}

We now briefly comment on the difficulties encountered in proving such results.
Various proofs of phase transition in statistical mechanics follow what is known as ``energy versus entropy'' approach. The essence of this can be described along the following lines.
In order to show that a certain event does not occur (an event that one doesn't expect at a given phase), one first characterizes a geometric/combinatorial structure that is necessary for its occurrence.
If the number of such structures is overwhelmed by the high energy price to construct them, one can show that such class has vanishing probability.
This is very successful in perturbative statistical mechanics, appearing in different forms such as the Peierls argument or the Pirogov-Sinai theory.
Moreover, the conservation of particles in the system gives rise to long range dependence in time.
This makes it difficult to find a suitable structure behind perpetually continued activity in this model, whose occurrence could be ruled out in this way.

The main techniques that we employ in order to prove our main results are renormalization and sprinkling, that we describe below.

\vspace{4mm}

{\bf Renormalization and sprinkling -} Renormalization techniques provide a way to control how certain parameters of the model vary as we change the scales of the system.
In our case the existence of an absorption phase is intuitive if the initial density of particles is low enough: at time zero, most of regions of space have only few particles which quickly become inactive.
However, fluctuations of the initial density give rise to spatial boxes with an exceptionally high number of particles that will remain active for an extended period of time, or, fluctuations of the dynamics produce boxes, where the initial density is low, but particles are unusually long-term active.
It is expected, though, that such exceptional boxes should not be able to create much damage if we look at them at a larger scale.
Thus, the important parameter to control in our context is the probability $p_k$, that a given box at scale $k$ leaks particles far away in space, see \eqref{e:pk}.
Our multi-scale analysis elucidates the mechanism behind the decay of the probabilities $p_k$ as the scale increases.
Turning this description into a rigorous proof involves controlling the long range dependencies of the model, as well the complications that appear once one conditions on the occurrence of rare events.
We deal with these issues using the Sieving Lemma~\ref{l:couple2}, which allows us to start from an ``almost worst case'' configuration and recover some independence.

The main advantage of this technique is that it looks very little into the details of the dynamics, making us hopeful that they could be adapted to work in other problems.
Moreover, the recursive inequalities that we devise can provide interesting quantitative results, such as Theorems~\ref{t:absorption} and \ref{t:dd}.
The renormalization picture of why absorption happens is very instructive and fits to our intuition that no matter how big a defect is, it will only affect a neighborhood of comparable size.

Another important ingredient that appears in the proof is \emph{sprinkling}. It consists of increasing slightly the density of particles in the system, when needed in order to blur the dependence that may appear by conditioning.
This idea already appeared in \cite{MR2952084} and \cite{PSSS11}.
These techniques become remarkably powerful when combined with the concept of \emph{soft local times} introduced in \cite{PT12}, see the key Lemma~\ref{l:couple2}.

We believe that the methods developed here will help deepen our understanding of absorption transitions and will open various possibilities for future works towards understanding the relation between FES and DDM models under a rigorous perspective.
Moreover, following the general theory of Renormalization Group, one could expect that an a similar multi-scale analysis coukd work not only in perturbative regimes, but lead to results close to criticality, although this would require  several new ingredients.

{\bf Organization -} This paper is organized as follows.
In Section~\ref{s:notation} we establish some notation and define precisely the model of ARW.
Section~\ref{s:triggering} is devoted to showing a triggering statement, which is later bootstrapped into our main quantitative estimates, see Theorem~\ref{t:main} in Section~\ref{s:renorm}.
Our main results are then proved in Section~\ref{s:infinite}.
Sections~\ref{s:simulation}~and~\ref{s:sieving} contain some auxiliary results that are needed through the paper.
Finally we show in Section~\ref{s:other_models} how to extend our results to other FES models such as SSM.

{\bf Acknowledgments -}    V. S. is supported by
Brazilian CNPq grants 308787/2011-0 and 476756/2012-0, and FAPERJ grant
E-26/102.878/2012-BBP and ESF RGLIS grant.
A.T. is grateful for the support from CNPq, through grants 306348/2012-8 and 478577/2012-5.

\section{Notation}
\label{s:notation}

We write $d(x,y)$ for the $\ell_\infty$-distance (also called supremum distance) in $\Z^d$. For $A \subset \mathbb{Z}^d$, we write $d(x,A)$ for $\min_{y \in A} d(x,y)$ and let $\partial_i A$ stand for the internal boundary of $A$, that is $\partial_i A = \{ x \in A; d(x,\mathbb{Z}^d \setminus A) = 1 \}$.

Given $x \in \mathbb{Z}^d$, we let $P_x$ denote the law on $D(\R_+, \Z^d)$ of a continuous time simple random walk starting from $x$. On the space $D(\mathbb{R}_+, \mathbb{Z}^d)$, we denote by $X_t$ the canonical projections, that is for $w \in D(\mathbb{R}_+, \mathbb{Z}^d)$, $X_t(w) = w(t)$.

Through this article we will consider a particle system on $\mathbb{Z}^d$ that is referred to as \emph{activated random walks} or the \emph{sleepy random walkers} in the literature. To make its definition precise, we will describe its generator in detail.

Each site in our system can assume a state in the space $\mathbb{S} = \{0,1,\dots\} \cup \{\mathfrak{s}\}$ where $\mathfrak{s}$ indicates the presence of a sleeping particle there, while $\{0, 1, \dots\}$ stores the number of active particles at that site. Observe from the above that we do not allow more than one sleeping particle at any site.

We denote a configuration of our system by $\eta:\mathbb{Z}^d \to \mathbb{S}$, and given such $\eta$, we define two modifications of this configuration, representing the allowed transformations in our dynamics. The first of them is denoted by $\eta^{(y)}$ (for $y \in \mathbb{Z}^d$) and indicates that the particles at site $y$ attempted to ``sleep''. Note below that sleeping is only allowed if $\eta(y) = 1$.
\begin{equation}
  \eta^{(y)}(x) =
  \begin{cases}
    \eta(x) \qquad & \text{if $x \neq y$ or $\eta(y) \neq 1$ and}\\
    \mathfrak{s} & \text{if $x = y$ and $\eta(y) = 1$.}
  \end{cases}
\end{equation}

The second transformation in our dynamics involves a jump attempt from site $y \in \mathbb{Z}^d$ to some of its neighbors $z$. Observe below that such a jump is only allowed if there is at least one active (that is, not sleeping) particle at $y$. Moreover, once a jump is performed it will wake up a sleepy particle that may be present at $z$. More precisely, if $y$ and $z$ are neighbors
\begin{equation}
  \eta^{(y \to z)}(x) =
  \begin{cases}
    \eta(x) \qquad & \text{if $x \not \in \{y,z\}$ or $\eta(y) \in \{\mathfrak{s},0\}$,}\\
    \eta(y) - 1 & \text{if $x = y$ and $\eta(y) \geq 1$,}\\
    \eta(z) + 1 & \text{if $x = z$, $\eta(y) \geq 1$ and $\eta(z) \neq \mathfrak{s}$ and}\\
    2 & \text{if $x = z$, $\eta(y) \geq 1$ and $\eta(z) = \mathfrak{s}$.}
  \end{cases}
\end{equation}

The transformations above allow us to define the generator of our process as follows. Fixed some sleep rate $\lambda > 0$, for $f: \mathbb{S}^{\mathbb{Z}^d} \to \mathbb{R}_+$, we define
\begin{equation}
  \label{e:generator}
  \mathcal{L}f(\eta) = \sum_{y \in \mathbb{Z}^d} \lambda \big( f(\eta^{(y)}) - f(\eta) \big) + \sum_{y \sim z} \eta(y) \big( f(\eta^{(y \to z)}) - f(\eta)\big).
\end{equation}
Observe that the factor $\eta(y)$ multiplying the last parenthesis above indicates that the rate of jump from $y$ to $z$ is proportional to the number of active particles at $y$. Observe that $\mathfrak{s}$ is only multiplied with zero and we tacitly assume that this product vanishes.

Our particle system is defined on the space $D(\mathbb{R}_+, \mathbb{S}^{\mathbb{Z}^d})$, endowed with the canonical projections $(\eta_t)_{t \geq 0}$ representing the configuration of particles at time $t$. Given an initial collection of particles $(x_j)_{j \in J}$ we can define $\eta_0(y) = |\{j; x_j = y\}|$ to be the number of particles at $y$ at times zero. Supposing that $\eta_0(y)$ does not grow exponentially as $y \to \infty$,
\begin{display}
  \label{e:Pmuxj}
  we define $\mathbb{P}_{(x_j)_{j \in J}}$ to be the law on the space $D(\mathbb{R}_+, \mathbb{S}^{\mathbb{Z}^d})$, starting from $\eta_0$ and under which $\eta_t$ evolves according to the generator $\mathcal{L}$ in \eqref{e:generator}.
\end{display}
To see why such process exists, one could use the theory developed in \cite{MR2108619}, or alternatively use the graphical construction given by the Diaconis-Fulton representation of the process that we introduce in Subsection~\ref{ss:df}. Implicitly, this means that every particle at time zero is active (since the value $\mathfrak{s}$ does not appear in $\eta_0$).

Given a function $\sigma:\mathbb{Z}^d \to \mathbb{R}_+$, we can define a Poisson point process with intensity measure $\sigma$. Most of the times, the intensity $\sigma$ will be taken to be a constant $\zeta > 0$ or $\zeta 1_A$, where $1_A$ stands for the indicator function of $A \subseteq \mathbb{Z}^d$.

Then, we can define $\eta_0(x)$ as the number of points at $x$ in the above Poisson process and
\begin{display}
  \label{e:Pmu}
  define $\mathbb{P}_\sigma$ to be the law on the space $D(\mathbb{R}_+, \mathbb{S}^{\mathbb{Z}^d})$, governing the evolution of $\eta_t$ under $\mathcal{L}$ and starting from a Poisson point process with intensity $\sigma$.
\end{display}
This is well defined whenever $\sigma$ is bounded (which will always be the case throughout this article). We also write, with a slight abuse of notation, $\mathbb{P}_\zeta$ (where $\zeta > 0$) to indicate that the process starts with a Poisson point process with uniform density $\zeta$.

Through this paper, we will be interested in the existence of an absorbing phase for our particle system at low densities. The relevant event will be
\begin{equation}
  \label{e:fixation}
  \{\text{the system fixates}\} = \bigcap_{x \in \mathbb{Z}^d}\{ \text{$\eta(x)$ changes only finitely many times} \}.
\end{equation}
In the literature, this is sometimes referenced as \emph{local fixation}.

With this notation in place, we can state our main results
\newconstant{c:rho_star}
\begin{theorem}
  \label{t:absorption}
  There exists $\useconstant{c:rho_star} > 0$ such that, for every $\zeta < \useconstant{c:rho_star}$
  \begin{equation}
    \mathbb{P}_\zeta [\text{the system fixates locally}] = 1.
  \end{equation}
  which is a consequence of
  \begin{equation}
    \label{e:short_tails}
    \mathbb{P}_\zeta [ \text{number of times that $\eta_t(0)$ changes is larger than $l$}] \leq c \exp\{ - c \log(l)^2\}.
  \end{equation}
\end{theorem}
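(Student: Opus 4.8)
The plan is to derive \eqref{e:short_tails}, and then the local fixation statement, from the renormalization bound of Theorem~\ref{t:main}, which controls the probability $p_k$ from \eqref{e:pk} that a scale-$k$ box leaks activity far away in space. Throughout, $c>0$ is a constant that may depend on $d,\lambda$ and the fixed small value of $\zeta$, and may change from line to line. The first, essentially bookkeeping, step is to pass to the abelian (Diaconis--Fulton) description of Subsection~\ref{ss:df}: stabilizing the configuration by toppling only inside a finite set $V$ yields an odometer $u_V$ which is non-decreasing in $V$, and $u:=\lim_{V\uparrow\mathbb Z^d}u_V$ bounds the total number of topplings performed at each site in the infinite dynamics. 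If $N_0$ denotes the number of changes of $\eta_t(0)$, then inspecting the transition rules --- each such change is a toppling at $0$, a toppling at a neighbour of $0$ with target $0$, or a sleep/wake at $0$, and between two successive sleeps at $0$ a particle must have been delivered to $0$ --- gives $N_0\leq c\sum_{d(x,0)\leq1}u(x)+1$. Hence it suffices to bound $\mathbb P_\zeta[\sum_{d(x,0)\leq1}u_V(x)>c\,l]$ uniformly in $V$, since $u_V\uparrow u$.

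Fix the scales $L_k$ of Section~\ref{s:renorm} and call a scale-$k$ box \emph{bad} when it fails the confinement property behind the definition of $p_k$. By construction, on the event that every scale-$k$ box meeting a box $R$ of side $\asymp L_k$ around the origin is good, the activity originating in the central part of $R$ never leaves $R$ in any restricted stabilization; together with the fact that $R$ carries at most $2\zeta L_k^d$ particles off an event of probability $\leq e^{-cL_k^d}$, this confinement caps $\sum_{d(x,0)\leq1}u_V(x)$ by an explicit increasing (polynomial) function $G(L_k)$. Choosing $k=k(l)$ with $G(L_{k(l)})\asymp l$, so that $\log L_{k(l)}\leq c\log l$, we get
\[
  \Big\{\textstyle\sum_{d(x,0)\leq1}u_V(x)>c\,l\Big\}\ \subseteq\ \{\text{some scale-}k(l)\text{ box meeting }R\text{ is bad}\}\ \cup\ \{R\text{ carries more than }2\zeta L_{k(l)}^d\text{ particles}\}.
\]
Only $O(1)$ scale-$k(l)$ boxes meet $R$, so a union bound with Theorem~\ref{t:main} and the Poisson large-deviation estimate gives, uniformly in $V$,
\[
  \mathbb P_\zeta\Big[\textstyle\sum_{d(x,0)\leq1}u_V(x)>c\,l\Big]\ \leq\ c\exp\{-c(\log L_{k(l)})^2\}+e^{-cL_{k(l)}^d}\ \leq\ c\exp\{-c(\log l)^2\}.
\]
Letting $V\uparrow\mathbb Z^d$ and using the previous paragraph yields \eqref{e:short_tails}. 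Local fixation follows immediately: $\{\eta_t(0)\text{ changes infinitely often}\}=\bigcap_{l\geq1}\{N_0>l\}$ has probability $\leq\inf_l c\exp\{-c(\log l)^2\}=0$, so by translation invariance of $\mathbb P_\zeta$ this holds at every $x\in\mathbb Z^d$, and a countable union over $x$ gives $\mathbb P_\zeta[\text{the system fixates locally}]=1$.

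The real difficulty is entirely in Theorem~\ref{t:main}, not in this deduction: to control the decay of $p_k$ one must start the scale-$(k+1)$ estimate from a configuration conditioned on the rare, dependence-creating event that several scale-$k$ sub-boxes are bad, and then recover enough independence and Poisson structure to close the recursion --- which is exactly where sprinkling and the Sieving Lemma~\ref{l:couple2}, through soft local times, come in. Within the deduction above, the one point demanding care is the link used in the second paragraph between the spatial confinement recorded by the good boxes and the temporal quantity $N_0$: one must ensure that the definition of a good box, possibly after a harmless enlargement, also bounds the odometer inside the box, so that confinement plus a bounded initial mass really does cap the number of changes at the origin. If the event in \eqref{e:pk} only records spatial leakage, a short extra step bounding the in-box odometer of a confined, low-density region of side $L_k$ must be inserted there.
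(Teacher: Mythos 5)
There are two genuine gaps in your deduction, and they are precisely the points where the paper's proof does real work beyond Theorem~\ref{t:main}. First, your central claim that spatial confinement plus a bound of order $\zeta L_k^d$ on the number of particles in $R$ ``caps'' $\sum_{d(x,0)\leq 1}u_V(x)$ by a deterministic polynomial $G(L_k)$ is not justified, and it is not true as a deterministic statement: a confined, low-density collection of activated walkers stabilizes almost surely, but the number of topplings/changes at the origin before stabilization is unbounded (particles can keep re-waking each other), so no event of the form ``all nearby boxes are good'' can yield a deterministic cap. What you would need is a quantitative tail bound on the total in-box activity of a confined system, which is an additional nontrivial probabilistic ingredient not supplied by Theorem~\ref{t:main} or the Sieving Lemma; you flag this yourself but call it ``a short extra step,'' which underestimates it. The paper avoids bounding any odometer altogether: it converts the temporal quantity into a spatial one by splitting according to whether some particle crosses more than three of the dyadic annuli $B_i$. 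On the complement, only particles started in $B_{i_0}\cup B_{i_0+1}\cup B_{i_0+2}$ can affect the origin, and these are handled by killing them on $B_{i_0+3}$ and bounding (their number) $\times$ (their hitting time of $B_{i_0+3}$), with $i_0$ chosen via $2^{10d(i_0+1)}\leq l$ so that this product is $\ll l$; no bound on the stabilization activity of a confined box is ever needed.

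Second, your confinement event does not match the quantity Theorem~\ref{t:main} controls, and it ignores the far particles. The probability $p_k$ in \eqref{e:pk} is defined for the process started from a Poisson field supported \emph{only} on the kernel $C^2_m$, with the buffer $C^0_m\setminus C^2_m$ initially empty; under $\mathbb{P}_\zeta$ the buffers are not empty, so ``every scale-$k(l)$ box meeting $R$ is good'' is not an event whose probability is bounded by $O(1)\cdot p_{k(l)}$. Moreover, particles starting outside $R$ (at all distances) can reach the origin and change $\eta_t(0)$, and a single-scale, $O(1)$-box union bound says nothing about them. This is exactly why the paper proves Theorem~\ref{t:set_displace} (which extends Theorem~\ref{t:main} to general starting sets $A$, at the cost of the factor $|A|$ and a density reduction by $4\cdot 3^d$), redistributes the homogeneous Poisson particles onto the sieved set $D$ via the stopping time $T$ and the soft-local-time domination \eqref{e:pit_stop}, and then sums the bound of Theorem~\ref{t:set_displace} over \emph{all} annuli $B_i$, $i\geq i_0$ — the convergence of $\sum_i (2^i)^d\exp\{-c\log^2(2^i)\}$ is what makes the $\exp\{-c\log^2 l\}$ tail survive the union over infinitely many scales. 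To repair your argument you would have to (i) insert a tail estimate for the activity of a confined finite system, and (ii) replace the single-scale local event by a multi-annulus scheme with the empty-buffer/redistribution step, at which point you have essentially reconstructed the paper's proof.
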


\newconstant{c:dd}
For the model on a finite box with driving/dissipation, we have
\begin{theorem}
  \label{t:dd}
  Fix any $\zeta < \useconstant{c:rho_star}$ and $\varepsilon > 0$. Then there exists a constant $\useconstant{c:dd} = \useconstant{c:dd}(\varepsilon)$ such that, on a box $B$ of side length $n$
  \begin{equation}
    \label{e:few_exit}
    \mathbb{P}_{\zeta \1{B}} \big[\text{more than $n^{d - 1 + \varepsilon}$ particles exit $B$}\big] \leq c \exp\{ \useconstant{c:dd} \log(l)^2\}.
  \end{equation}
\end{theorem}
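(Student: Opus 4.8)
The plan is to derive Theorem~\ref{t:dd} as a corollary of the multi-scale machinery summarized by \eqref{e:pk} and Theorem~\ref{t:main}, using the same probabilities $p_k$ that the infinite-volume argument relies on. The key observation is an \emph{abelian} one: in the Diaconis--Fulton representation (Subsection~\ref{ss:df}), the total number of particles that ever cross a fixed hyperplane — and hence the number dissipated through a given face of $\partial_i B$ — does not depend on the order in which topplings are performed. Therefore I can legitimately stabilize the finite box $B$ by first stabilizing, in the infinite lattice $\Z^d$, the Poisson$(\zeta\1{B})$ configuration obtained by placing all $\zeta n^d$ particles at once, and only afterwards identify which of those particles have wandered out of $B$. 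A particle leaves $B$ only if, in the infinite-volume stabilization of $\zeta\1{B}$, some box at scale $k$ adjacent to $\partial_i B$ "leaks" activity to distance of order its own diameter; this is exactly the bad event whose probability is $p_k$ in \eqref{e:pk}.

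Here are the steps in order. First, fix the scale $k=k(n)$ with diameter $L_k$ of order $n^{\delta}$ for a small $\delta=\delta(\varepsilon)>0$ to be chosen, so that $L_k\to\infty$ with $n$ but $L_k$ is still a small power of $n$. Second, tile a neighborhood of the boundary $\partial_i B$ by $O(n^{d-1}/L_k^{d-1})$ scale-$k$ boxes; by Theorem~\ref{t:main}, for $\zeta<\useconstant{c:rho_star}$ each such box leaks to distance $\gg L_k$ with probability at most $p_k\le \exp\{-c\log(L_k)^2\}\le \exp\{-c\delta^2\log(n)^2\}$, which dominates any inverse polynomial in $n$. Third, on the complementary event every particle that starts in $B$ stays within distance $O(L_k)$ of its starting point once one accounts for the telescoping of scales (the same geometric summation $\sum_j L_j$ that appears in the infinite-volume proof), so only particles born within distance $O(L_k)$ of $\partial_i B$ can possibly exit $B$. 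Fourth, the expected number of such particles is $\zeta\cdot O(n^{d-1}L_k)=O(n^{d-1+\delta})$, and a Poisson/Chernoff tail bound shows the actual number exceeds $n^{d-1+\varepsilon}$ with probability at most $\exp\{-cn^{\varepsilon-\delta}\}$ once $\delta<\varepsilon$. Fifth, combine the union bound over the $O(n^{d-1})$ boundary boxes (each contributing $p_{k(n)}$) with this Chernoff term; both are $\le c\exp\{-c\log(n)^2\}$, which after rewriting $\log(n)^2$ in terms of the relevant parameter gives the bound stated in \eqref{e:few_exit}.

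The main obstacle is the abelian reduction in the presence of the \emph{sleep} mechanism: the ARW dynamics is not obviously abelian because a particle that has fallen asleep can be reawakened, so the final configuration and the set of dissipated particles could a priori depend on the toppling order. The standard fix — treating sleeping as an extra "$\mathfrak{s}$-toppling" instruction in the Diaconis--Fulton stack — does restore abelianness for the \emph{odometer} and hence for the count of crossings of any edge, and this is presumably already set up in Subsection~\ref{ss:df}; I would cite that and check that dissipation through the boundary is a deterministic function of the odometer restricted to boundary edges, so that the infinite-volume comparison is valid. A secondary, more technical point is that the infinite-volume leakage events $p_k$ are defined for boxes embedded in $\Z^d$ with a translation-invariant Poisson background, whereas here the background is $\zeta\1{B}$, i.e.\ empty outside $B$; but emptying sites outside $B$ only \emph{decreases} activity (fewer particles, monotonicity of the odometer in the initial configuration, which again follows from the Diaconis--Fulton construction), so $p_k$ computed with the full-space density is an upper bound for the boundary-box leakage probability in the finite problem, and the argument goes through.
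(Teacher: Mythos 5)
There is a genuine gap in the central step of your argument: you assert that a scale-$k$ box near $\partial_i B$ ``leaking'' activity in the actual system is ``exactly the bad event whose probability is $p_k$'', and you justify the comparison by monotonicity (``emptying sites outside $B$ only decreases activity''). But the monotonicity goes the wrong way for what you need. The quantity $p_k$ in \eqref{e:pk} is defined for an initial configuration that is Poisson \emph{only on the kernel} $C^2_m$, with the buffer annulus $C^0_m \setminus C^2_m$ and everything else empty. In your finite-box problem the initial configuration is Poisson on all of $B$: each boundary box contains particles throughout $C^0_m$ (not just in $C^2_m$), and all neighbouring boxes are also populated, so their particles can enter $C^0_m$, wake sleepers and push the box's own particles further. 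By Theorem~\ref{t:monotonicity} this configuration dominates the one defining $p_k$, so the per-box leakage probability in your system is \emph{not} bounded above by $p_k$; the inequality you would get from monotonicity points in the opposite direction. Bridging exactly this discrepancy is what the paper's Theorem~\ref{t:set_displace} is for: one first turns off sleeping (Corollary~\ref{c:offsleep}), checks the configuration is balanced, and uses the Sieving Lemma~\ref{l:couple2} to dominate the relocated particles by a sieved law supported on the kernels $C^2_{m'}$, at the price of a sprinkling in the density and the $4\cdot 3^d$ loss coming from the overlap of the enlarged boxes; only then can Theorem~\ref{t:main} be applied. Your union bound over boundary boxes, and the claim that on the good event particles stay within $O(L_k)$ of their birthplace, both silently assume this reduction has already been made.

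For comparison, the paper's proof of Theorem~\ref{t:dd} is much shorter than your scheme precisely because it delegates all of the above to Theorem~\ref{t:set_displace}: write $B' = [n^{\varepsilon/2}, n-n^{\varepsilon/2})^d \cap \Z^d$, bound the number of particles starting in the layer $B\setminus B'$ by $n^{d-1+\varepsilon}$ via a Poisson large-deviation estimate (these particles are simply conceded), and bound $\mathbb{P}_{\zeta \1{B'}}[\text{some particle exits } B]$ by $c\,|B'|\exp\{-c\log^2(n^{\varepsilon/2})\}$ using Theorem~\ref{t:set_displace} with $A = B'$ and $s = n^{\varepsilon/2}$; the Diaconis--Fulton monotonicity is used only for the (correct-direction) observation that the particles of $B'$ exit $B$ no more easily in the decomposed experiment. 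If you replace your direct appeal to $p_k$ by an appeal to Theorem~\ref{t:set_displace} (or reproduce its sieving argument for your boundary tiling), your outline becomes sound, and your abelian/odometer remarks and the Chernoff count of boundary-layer particles are fine; minor quibbles aside (the Chernoff exponent should be of order $n^{d-1+\varepsilon}$, and interior boxes must also be confined, not only boundary ones), the missing ingredient is the sieving step, not the combinatorics.
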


Throughout the text we let $c$ denote positive constants, which may depend on $d$ and $\lambda$ and may change from time to time. Further dependence will be denoted explicitly. So, for instance $c(\alpha)$ depends on $\alpha$ and possibly on $d$ and $\lambda$. Moreover, numbered constants such as $c_1, c_2, \dots$ refer to their first appearance in the text.

\subsection{Diaconis-Fulton representation}
\label{ss:df}

This section is dedicated to an important graphical representation of the above described stochastic process.
Such construction was developed in various different articles, such as \cite{MR1218674}, \cite{MR1375419}, \cite{MR2747064} and \cite{MR1622393}.
However, the most complete presentation of this graphical construction for our current purposes has been presented in Rolla's PhD thesis \cite{2008PhDT.......168R}.
The two main advantages of this representation are that it provides notions of monotonicity and commutativity to this system, which are not apparent at first.

We give here a somewhat informal description of the construction, while referring the reader to the more complete description presented in Section~1.3 of \cite{2008PhDT.......168R}.
Intuitively speaking, in this graphical construction, instructions telling what the particles should do (attempt to sleep, jump in such direction...) are stored in each site.
So let us fix a collection $(F_{x,j})_{x \in \mathbb{Z}^d, j \geq 1}$ of elements in the set of instructions  $\Gamma = \{\mathfrak{s}, e_1, -e_1, \dots, e_d, -e_d\}$.
Later we will assign an i.i.d. distribution to the instructions $F_{x,j}$, but for now they could be arbitrary.

We start by describing how the construction goes for a finite initial number $n$ of particles at positions $x_1, \dots, x_n$ having states $\tau_1, \dots, \tau_n \in \{\mathfrak{s}, \mathfrak{a}\}$, where $\mathfrak{s}$ indicates that the particle is sleeping and $\mathfrak{a}$ stands for active.
Whenever a given particle's clock rings, if it is active, it reads the envelope on its current site, executes the corresponding instruction and ``burns that envelope''.
The notion of ``burning'' is implemented through the local time (or total activity) of particles $J \in \{0, 1, \dots\}^{\mathbb{Z}^d}$, that typically starts with value zero everywhere.
The function $J_x$ tells us how many envelopes have been read at site $x$.

To finish our graphical construction, we need a process $T$ giving the time of updates $T = \{0 = t_0 < t_1 < \dots \}$ (later we will take $t_1, t_2, \dots$ to be a Poisson point process) and a sequence of particle indices $N = (n_1, n_2, \dots) \in \{1, \dots, n\}^{\mathbb{Z}_+}$.
The index $n_i$ tells which particle is supposed to act at time $t_i$ (these will later be chosen as i.i.d. and uniformly distributed indices).

We now proceed iteratively as follows
\begin{enumerate}[\qquad 1)]
\item at time $t_i$, the particle $n_i$ reads the envelope $F_{x_{n_i}, J_{x_{n_i}}}$,
\item then, the total activity $J$ is increased by one at position $x_{n_i}$ and
\item the position $x_{n_i}$ and the state $\tau_{n_i}$ of that particle are updated accordingly.
\end{enumerate}

It is not hard to see that the above construction results in the same distribution of particles' evolution (starting at positions $x_1, \dots, x_n$ at states $\tau_1, \dots ,\tau_n$) as long as
\begin{enumerate}[\qquad 1)]
\item $F_{x,j}$ are i.i.d. with proper distributions to reflect the sleeping rate,
\item $T$ is a Poisson Point process with rate $n(1 + \lambda)$ and
\item $N$ is i.i.d. and uniformly distributed over $\{1, \dots, n\}$.
\end{enumerate}

Although the above construction is completely equivalent to the one provided by the generator, it possesses two crucial features which are stated in the theorems below.

We say that a given realization of the graphical construction $\omega = \big( (x, \tau), F, T, N \big)$ \emph{stabilizes} if, by following the above described procedure, we eventually obtain an inactive configuration (that is, $\tau_i = \mathfrak{s}$ for every $i$). This clearly occurs almost surely if $n$ is finite.

\begin{theorem}[Commutativity, Theorem~1.2 in \cite{2008PhDT.......168R}]
  \label{t:commutative}
  Let $\omega = \big( (x, \tau), F, T, N \big)$ be a realization of the graphical construction that stabilizes.
  Suppose that $\omega' = \big( (x', \tau'), F', T', N' \big)$ has the same envelopes and initial state as $\omega$, that is $(x, \tau) = (x', \tau')$ and $F = F'$. Then $\omega'$ also stabilizes and, at the final configuration,
  \begin{equation}
    \text{$\sum_{i=1}^n \delta_{x_i} = \sum_{i=1}^n \delta_{x'_i}$ and $J = J'$}.
  \end{equation}
\end{theorem}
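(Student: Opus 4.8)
\medskip

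\noindent\textbf{Proof proposal.} The plan is to discard the continuous--time layer at once: the configuration is constant between successive clock rings, so a realisation $\omega=\bigl((x,\tau),F,T,N\bigr)$ affects the final state only through the order of the moves that actually get executed, i.e.\ through the subsequence of $N$ formed by those indices whose particle is active when its clock rings. I would therefore call a \emph{move} the operation ``pick a site $z$ carrying an active particle, read the envelope $F_{z,J_z}$, increase $J_z$ by one, and update that particle accordingly'', call a finite sequence of moves \emph{legal} from a configuration if each of its moves is admissible when it is performed, and \emph{complete} if it terminates at an inactive configuration; here $J$ is the odometer from the construction, recording how many moves took place at each site. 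In this language the assertion is the \emph{abelian property}: any two complete legal sequences that start from $(x,\tau)$ and consume the envelopes $F$ end at the same pair $\bigl(\sum_i\delta_{x_i},J\bigr)$, the clock data $T,N$ being irrelevant beyond guaranteeing that moves keep being offered.

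The engine is a \emph{local exchange lemma}: whenever two legal moves are simultaneously available at \emph{distinct} sites $x\neq y$, performing them in either order is legal and produces the same configuration. When the two moves affect disjoint sets of sites this is immediate (and in the general case one first checks that neither move can destroy the active particle that makes the other admissible, so both orders are legal). The one genuinely delicate case is when a move at $x$ pushes a particle onto $y$ (or vice versa), and here the ``wake on arrival'' rule is exactly what restores the symmetry: for instance, if $y$ carries a single active particle with $F_{y,J_y}=\mathfrak{s}$, then in the order $x$--then--$y$ that particle is no longer alone and stays active, while in the order $y$--then--$x$ it first falls asleep and is then woken by the incoming particle; both orders leave two active particles at $y$ and increment $J_x$ and $J_y$ once each. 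The remaining sub-cases --- two active particles at a common site $x=y$, both moves firing into a common third site, one hopping onto the other's site without any sleeping involved, and so on --- are symmetric in the two particles and checked directly.

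From the exchange lemma I would derive the \emph{least action principle} by induction on the length of the first sequence: if $\beta$ is any complete legal sequence and $\alpha$ any legal sequence from the same configuration, then the odometer of $\alpha$ is pointwise at most the odometer of $\beta$. In the inductive step, if $\alpha$ begins with a move at $x$ then $\beta$ must itself contain a move at $x$ --- otherwise the active particle initially at $x$ is never removed and $\beta$'s final configuration is active, contradicting completeness --- and, since every move of $\beta$ that precedes its first $x$-move sits at another site, iterating the exchange lemma slides that $x$-move to the front of $\beta$ without changing $\beta$'s final configuration or odometer; stripping the now common leading move and applying the inductive hypothesis to the two tails closes the step. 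Applying the principle in both directions, any two complete legal sequences have equal odometers; and since the final particle configuration is a function of the initial one, the odometer and the envelopes --- the net flow through each site is read off from the odometer and $F$, and a site is left with a sleeper precisely when its last move was a sleep--instruction and its final occupancy equals one --- all complete legal sequences end at the same $\bigl(\sum_i\delta_{x_i},J\bigr)$. The same bound shows that \emph{every} legal sequence contains at most $\sum_z J_z$ moves, so any clock sequence $N'$ that calls each particle infinitely often (an i.i.d.\ uniform one does so almost surely) cannot postpone stabilisation forever; hence $\omega'$ stabilises too, and its final state coincides with that of $\omega$.

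The step I expect to be the crux is the exchange lemma in the presence of sleeping: the rule ``a particle sleeps only when it is alone'' is not symmetric under reordering, and the whole scheme would collapse were it not for the compensating ``wake on arrival'' convention, so this single case carries the entire abelian phenomenon --- everything downstream is the standard confluence/least--action bookkeeping. A secondary point to handle with care is the reduction from the continuous--time description to sequences of moves, together with the mild fairness hypothesis on $N'$ (each particle called infinitely often) that underlies the statement.
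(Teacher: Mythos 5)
The paper does not actually prove this statement itself --- it is quoted from Rolla's thesis and the reader is referred to Section~1.3 there --- and your argument is essentially the proof given in that reference: a local exchange (abelian) lemma for two legal moves at distinct sites, with the sleep/wake interaction as the only delicate case, upgraded by the sliding/least-action induction to show that all complete legal sequences have the same odometer and hence the same final unlabeled configuration, together with the reduction of the continuous-time data $(T',N')$ to a legal sequence of moves. Your treatment of the crux case (sleep attempt at $y$ versus arrival from $x$, restored by the wake-on-arrival rule) and your fairness caveat on $N'$ (needed so that $\omega'$ keeps being offered moves, and satisfied almost surely by the i.i.d.\ uniform choice in the construction) are exactly the points the cited proof handles, so the proposal is correct and takes the same route.
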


Another very important feature of this construction is related to monotonicity.
Intuitively speaking, if we add more particles or remove $\mathfrak{s}$ envelopes, we sustain activity for a longer time.
We say that $\omega' \preccurlyeq \omega$ if
\begin{enumerate}[\qquad 1)]
\item $\sum\limits_{i = 1}^{n'} \delta_{x'_i} \leq \sum\limits_{i = 1}^n \delta_{x_i}$, that is: $\omega'$ has no more particles on each site than $\omega$,
\item $\sum\limits_{i \leq n';\atop \tau'_i = \mathfrak{a}} \delta_{x'_i} \leq \sum\limits_{i \leq n;\atop \tau_i = \mathfrak{a}}^{\phantom{\infty}} \delta_{x_i}$, that is: $\omega$ has no more active particles on each site than $\omega'$ and
\item the envelopes of $F'$ can be obtained by removing some $\mathfrak{s}$ envelopes from $F$.
\end{enumerate}
We can now state the following
\begin{theorem}[Monotonicity, Theorem~1.3 in \cite{2008PhDT.......168R}]
  \label{t:monotonicity}
  Let $\omega = \big( (x, \tau), F, T, N \big)$ be a realization of the graphical construction that stabilizes.
  Suppose that $\omega' \preccurlyeq \omega$, then $\omega'$ also stabilizes and the final activity of $\omega'$ (number of burned envelopes) is smaller or equal to that for $\omega$, that is $J'_x \leq J_x$ for every $x \in \mathbb{Z}^d$.
\end{theorem}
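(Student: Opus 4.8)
By Theorem~\ref{t:commutative} (more precisely, the abelian property behind it), whether a realization stabilizes and, if so, its final activity do not depend on the schedule $(T,N)$. So I would analyse $\omega$ along a maximal legal execution — one in which the particle acting at each step is active, run until no active particle remains, hence finite since $\omega$ stabilizes — and exhibit some legal execution of $\omega'$ that stabilizes with $J'_x\le J_x$. It is also convenient to reduce first to an \emph{atomic} modification: since $\omega'\preccurlyeq\omega$ one can go from the data of $\omega$ to that of $\omega'$ in finitely many steps, each of which removes one active particle, removes one sleeping particle, or deletes one $\mathfrak{s}$-instruction from one stack; by transitivity it is enough to treat a single such step.

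The main device is a \emph{shadowing coupling} in which the smaller realization imitates the larger one site by site. When the larger realization reads its $j$-th envelope at a site $x$: if that envelope is an $\mathfrak{s}$ deleted in passing to $F'_{x,\cdot}$, the smaller realization idles at $x$; otherwise it reads the corresponding envelope at $x$. Since the smaller realization then acts only where and when the larger one acts, and reads there precisely the larger one's reads with the deleted $\mathfrak{s}$'s removed, its execution is finite — so it stabilizes — and at each site it burns at most as many envelopes as the larger realization, that is $J'_x\le J_x$. This last inequality is where the third condition in the definition of $\preccurlyeq$ is used.

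What must be checked for the coupling to make sense is the invariant that, whenever the larger realization is about to read a \emph{non-deleted} envelope at $x$, the smaller realization has an active particle at $x$ ready to read the matching envelope. Propagating this invariant through the execution is the real work: one keeps track, site by site, of the jump-reads performed so far by the two realizations — these control the occupations via the conservation identity (occupation equals initial number plus jumps in minus jumps out) — together with which sites currently hold a sleeping particle, and verifies that a single read of either type preserves the relation; the first two conditions in the definition of $\preccurlyeq$ are what feed this bookkeeping.

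The $\mathfrak{s}$-instructions are, I expect, the main obstacle, because the naive invariant ``the configuration of $\omega'$ is $\preccurlyeq$ that of $\omega$ at every instant'' is false: if $\omega$ reads an $\mathfrak{s}$ that puts a lone particle to sleep while the matching envelope has been deleted in $\omega'$, then a subsequent jump into that site wakes \emph{two} active particles in $\omega$ but only \emph{one} in $\omega'$, so the active-particle comparison fails even though $\omega$ has done strictly more work. The read-based invariant above is designed to survive exactly such steps; turning it into a precise statement and checking it by cases — jump-read versus $\mathfrak{s}$-read, and a lone particle versus several — is the delicate part, and is the content of Theorem~1.3 of \cite{2008PhDT.......168R}.
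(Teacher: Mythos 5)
Your strategy---use the abelian property (Theorem~\ref{t:commutative}) to free yourself from the schedule $(T,N)$, reduce by transitivity to an atomic modification, and then dominate the smaller realization by a coupling that follows the larger one---is the right kind of argument, and you correctly isolate the genuine subtlety: instantaneous domination of configurations fails when $\omega$ reads an $\mathfrak{s}$ whose counterpart was deleted in $\omega'$. But as written the proposal is an outline, not a proof. The whole content of the statement is the invariant you call ``read-based'', and it is never formulated, let alone propagated through a single read: your shadowing rule ``otherwise it reads the corresponding envelope at $x$'' presupposes that $\omega'$ has an active particle at $x$ at that instant, which is exactly what can fail---not only in the deleted-$\mathfrak{s}$ scenario you describe, but already when $\omega'$ starts with one particle fewer at $x$, or when an earlier divergence has left the $\omega'$-particle at $x$ asleep or absent. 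If you instead let $\omega'$ skip such reads, the two stacks at $x$ fall out of alignment, the claim that $\omega'$ ``reads precisely the larger one's reads with the deleted $\mathfrak{s}$'s removed'' no longer holds, and $J'_x\le J_x$ requires a different bookkeeping (e.g.\ an argument that once $\omega'$ falls behind at a site it stays behind). You acknowledge this yourself by deferring the case analysis to Theorem~1.3 of \cite{2008PhDT.......168R}; but that case analysis \emph{is} the proof, so what you have is a plan with a genuine gap at its center rather than a complete argument.

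For the comparison you were asked about: the paper contains no proof of this theorem at all---it is imported verbatim from Section~1.3 of \cite{2008PhDT.......168R}, and the text says so explicitly. So your proposal cannot be measured against an argument in the paper; it can only be measured against the cited one, which it gestures toward (abelianness plus a monotonicity comparison of the burned-envelope counts $J_x$) without reproducing the step where the work is done. To turn your sketch into a proof you would need to state the site-by-site invariant precisely (in terms of the number of jump-reads and $\mathfrak{s}$-reads performed by each realization at each site, the induced occupations via the conservation identity, and the set of sleeping sites), verify it for each of the atomic modifications and for each type of read, and also make sure your list of atomic steps matches the partial order $\preccurlyeq$ as defined in the paper (turning an active particle into a sleeping one, not only deleting particles, must be covered).
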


These theorems will be used several times throughout the text and their proofs can be found in \cite{2008PhDT.......168R}, Section~1.3.
An important consequence of the above stated theorems is the following

\begin{corollary}
  \label{c:offsleep}
  Fix a finite collection of particles $(x_i)_{i \in I}$ and a stopping time $T_i$ for each of them. Then, for any event $A$ which is increasing in the final accumulated activity $(J_x)$,
  \begin{equation}
    \label{e:offsleep}
    \mathbb{P}_{(x_i)_{i \in I}} (A) \leq E_{(x_i)_{i \in I}} \big( \mathbb{P}_{(X^i_{T_i})_{i \in I}} (A) \big),
  \end{equation}
  where the expectation above is taken with respect to an independent collection of simple random walks.
\end{corollary}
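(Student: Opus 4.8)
I would work entirely inside the Diaconis--Fulton representation of Subsection~\ref{ss:df} and exploit Theorems~\ref{t:commutative} and~\ref{t:monotonicity}. Fix the i.i.d.\ envelope stacks $(F_{x,j})$. The plan is to compare the ARW started from $(x_i)_{i\in I}$ with the process obtained by first letting each particle $i$ perform, using those same stacks, a ``free excursion'' --- it reads envelopes, executes every jump instruction, and, whenever it reads an $\mathfrak s$, is forbidden to fall asleep and simply burns that envelope and moves on --- stopped at time $T_i$; after all $|I|$ excursions are completed (say in the order $1,\dots,|I|$), all particles are active, at positions $Y_1,\dots,Y_{|I|}$, and one continues with the genuine ARW dynamics. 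Note that the trajectory of particle $i$ during its excursion is a simple random walk, so $T_i$ is a legitimate stopping time for it.

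First I would verify the domination $\mathbb P_{(x_i)}(A)\le \mathbb P(A\ \text{for the modified process})$. Forbidding particle $i$ to sleep during its excursion is the same, at the level of particle trajectories, as deleting from the stacks the $\mathfrak s$-envelopes it reads in that window; by the monotonicity theorem (Theorem~\ref{t:monotonicity}, applied with the same particles and the same active states but fewer $\mathfrak s$-envelopes) this can only raise the final accumulated activity $(J_x)$, and by the commutativity theorem (Theorem~\ref{t:commutative}) the result does not depend on the order in which excursions and ordinary updates were scheduled. Since $A$ is increasing in $(J_x)$, the event $A$ for the genuine ARW is contained, on this coupling, in $A$ for the modified process.

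Next I would identify the law of the modified process. Each excursion reads the stacks in a read-one-instruction-then-decide fashion, so, conditionally on the outcome of the first $i-1$ excursions, the not-yet-consumed envelopes are again i.i.d.\ with the original law and independent of the past. Two consequences: (i) $X^1,\dots,X^{|I|}$ are independent simple random walks issued from $x_1,\dots,x_{|I|}$, so the stopped positions $(Y_i)_i$ have the law of $(X^i_{T_i})_i$ for an independent family of walks --- reading ``later'' envelopes at a commonly visited site is distributionally the same as reading fresh ones, which is why distinct excursions do not become dependent; and (ii) conditionally on the excursions, the leftover stacks are fresh i.i.d.\ envelopes, so the post-excursion evolution is distributed as the ARW started from $(Y_i)_i$ with all particles active. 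Putting the two steps together yields $\mathbb P_{(x_i)}(A)\le E_{(x_i)}\big(\mathbb P_{(X^i_{T_i})}(A)\big)$, i.e.\ \eqref{e:offsleep}.

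The step I expect to require the most care is the bookkeeping of the ``final accumulated activity'' across the two phases: one has to make sure that the activity attributed to the excursion phase is accounted for consistently with how $(J_x)$ enters the right-hand side (so that the excursions, which burn extra envelopes, only help and never hurt), and that deleting $\mathfrak s$-envelopes inside a randomly chosen, stopping-time-determined window is a bona fide instance of Theorem~\ref{t:monotonicity}. The other potential subtlety --- that distinct particles may traverse, hence consume, overlapping stacks, seemingly destroying the independence of the excursions --- is resolved by the exploration argument in step (i).
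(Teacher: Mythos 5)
Your construction (same envelope stacks; let each particle read and execute jump instructions while burning the $\mathfrak s$'s it meets, up to its own stopping time; then resume the true dynamics) is exactly the argument the paper has in mind --- the paper itself offers only the one-sentence sketch below Theorem~\ref{t:monotonicity} --- and your two intermediate claims are fine: the pathwise domination by the ``no sleep until $T_i$'' realization is a legitimate, realization-by-realization application of Theorem~\ref{t:monotonicity} (the deleted $\mathfrak s$-envelopes being determined by the realization is harmless, since the comparison is deterministic for each fixed pair of instruction arrays), and the exploration argument correctly identifies the excursions as independent simple random walks and the leftover stacks as fresh i.i.d.\ envelopes.

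The genuine gap is in the last gluing step, and it is precisely the point you labelled ``bookkeeping'' and then dismissed with ``the excursions only help and never hurt''. They do hurt: monotonicity gives $J^{\mathrm{genuine}}_x \le J^{\mathrm{exc}}_x + J^{\mathrm{post}}_x$, where $J^{\mathrm{exc}}$ is the activity burned along the excursion paths and $J^{\mathrm{post}}$ is the activity of the post-excursion ARW, and only $J^{\mathrm{post}}$ has the law appearing on the right-hand side of \eqref{e:offsleep}. For a general increasing event you cannot discard $J^{\mathrm{exc}}$: take a single particle at $x_1$, $T_1$ the hitting time of a sphere of radius $n$, and $A=\{J_{x_1}\ge 1\}$; the left-hand side of \eqref{e:offsleep} equals $1$ (the particle must read at least one envelope at $x_1$), while the right-hand side is the probability that a fresh ARW particle started at distance $n$ ever returns to $x_1$ while active, which is small. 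So the inequality as you (and the corollary) state it does not follow from your two steps, and indeed cannot hold for all increasing $A$. What is true, and what the paper actually uses, is the weaker bound with $J^{\mathrm{exc}}+J^{\mathrm{post}}$ on the right; the applications then either choose $T_i$ so that the stopped excursion paths cannot create activity in the region the event depends on (in Lemma~\ref{l:tiny_fraction} the $S_j$ are hitting times of spheres contained in $B_L$, so excursions never touch $\partial_i B_L$), or they bound separately the probability that an excursion escapes before its stopping time, as in \eqref{e:escape_before_Sj}, \eqref{e:no_exit} and the event ``no $X^j$ hits $\partial C^{q}_m$ before $T_j$'' built into Lemma~\ref{l:couple2}. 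To close your proof you must either restrict the class of events in this way or add the excursion activity to the right-hand side; as written, the final implication fails. (A minor additional remark: invoking Theorem~\ref{t:commutative} for schedule-invariance of the modified process is not quite right pathwise, since the set of deleted $\mathfrak s$-envelopes depends on the schedule; it is cleaner to simply fix one schedule, which is all the comparison needs.)
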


Intuitively speaking, the above corollary says that we can turn off the ``sleeping envelopes'', wait until each particle reaches its stopping time and then turn back to the original dynamics.
This procedure can only increase the total activity of the system.

It is also important to note that restricting ourselves to finite starting configurations such as above is not a big restriction due to the following.
Suppose that we are interested in the variable $R$ giving the number of times that $\eta_t(0)$ changed in the whole evolution of the system.
Then $R = \lim_{s \to \infty} R_s$, where $R_s$ is the number of updates in $\eta_t(0)$, for $t \leq s$.
Moreover,
\begin{equation}
  \label{e:finite_approx}
  \mathbb{P}_{\mu}[R_s \geq l] = \lim_{M \to \infty} \mathbb{P}_{\1{B(0,M)} \cdot \mu} [R_s \geq l].
\end{equation}
The proof of this statement can be found in (1.3), Section~1.4 of \cite{2008PhDT.......168R}.

\section{Vanishing density and triggering}
\label{s:triggering}

In this section, we will prove two statements concerning the absorption of particles on a finite box. These statements are important steps towards our final result and their proofs illustrate well of some of the techniques used throughout the paper.

These two results can be viewed as a weaker form of our final Theorem~\ref{t:main}, as they deal with a vanishing density of particles in the box (as the size of the box grows). But they are necessary in order to trigger our renormalization scheme later.

\begin{lemma}
  \label{l:tiny_fraction}
  Let $B_L = [0,L)^d \cap \Z^d$ be a box of side length $L$ and define its half-kernel $B_L' = [L/4, 3L/4)^d \cap \Z^d$. Now fix $\delta > 0$ and an arbitrary collection of particles $(x_j)_{j = 1}^{k}$ in $B_L'$ where $k = \lfloor L^{1-\delta} \rfloor$. Then
  \begin{equation}
    \label{e:tiny_fraction}
    \mathbb{P}_{(x_j)_{j = 1}^{k}} \big[ \text{some particle reaches $\partial_i B_L$} \big] \leq c(\delta) \exp \{ - c L^{\delta/2} \}.
  \end{equation}
\end{lemma}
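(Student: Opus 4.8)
The plan is to use Corollary~\ref{c:offsleep} to reduce the problem to a question about a collection of independent simple random walks, and then apply a union bound over the $k = \lfloor L^{1-\delta}\rfloor$ particles. Since the event ``some particle reaches $\partial_i B_L$'' is increasing in the accumulated activity $(J_x)$, Corollary~\ref{c:offsleep} allows us to turn off the sleep instructions entirely; in fact we may simply let every particle perform an unrestricted continuous-time simple random walk for all time (take the stopping times $T_i = \infty$, or more carefully $T_i$ large and pass to a limit). Under this domination, if a particle started at $x_j \in B_L'$ is ever to contribute to a change at a site of $\partial_i B_L$, the underlying random walk $X^j$ must itself travel $\ell_\infty$-distance at least $L/4$ from $x_j$ (since $\dist(B_L', \partial_i B_L) \geq L/4 - 1$). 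Thus it suffices to bound $P_{x_j}\big[\sup_{t\geq 0} d(X_t, x_j) \geq L/4 - 1\big]$ — but a recurrent simple random walk visits every site, so this probability is $1$, and the naive domination is too lossy. The fix is to not send time to infinity but to run the walks only up to a time comparable to the \emph{total} activity the sleeping dynamics can produce before absorption, or — cleaner — to observe that in the actual sleepy dynamics the total number of jumps is a.s. finite and, by the monotonicity/commutativity framework, can be bounded in terms of a simpler stabilization procedure on $B_L'$; I'd run each particle for a geometric/exponential number of steps reflecting the finite expected life before sleeping.

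More concretely, here is the route I would actually take. Each particle, while awake, attempts to sleep at rate $\lambda$ and jumps at rate $1$; so between consecutive ``wakings'' it makes a ${\rm Geometric}(\lambda/(1+\lambda))$ number of jumps, i.e.\ a geometric number with mean $1/\lambda$. A particle only gets re-woken when another particle jumps onto it. With only $k$ particles in the system, the total number of jumps performed by all particles before global stabilization is a random variable $\mathcal{N}$, and one shows $\mathcal{N}$ has exponential tails with $\E[\mathcal{N}] \leq C k$: indeed, assign to each particle a ``credit'' and note that each jump either consumes one unit of one particle's remaining geometric clock or wakes a sleeping particle, but the number of wakings is itself bounded by the number of jumps onto occupied-by-sleeper sites, and a counting/martingale argument (or the explicit Diaconis–Fulton bound, stabilizing particles one at a time and noting each newly added particle adds on average $O(1)$ further jumps when the ambient density is $O(L^{-\delta}) \ll 1$) closes the loop. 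Granting $P[\mathcal{N} > m] \leq c\exp\{-c m/k\}$, I then split: either $\mathcal{N} \leq L^{1+\delta/2}$, in which case every particle makes at most $L^{1+\delta/2}$ jumps and so moves $\ell_\infty$-distance at most $L^{1+\delta/2}$ — wait, that's already bigger than $L$, so this split must instead be: $\mathcal{N} \leq$ (number small enough that a walk of that many steps is unlikely to travel $L/4$). A single simple random walk path of $m$ steps has $P[\,\exists\, t\leq m:\ d(X_t, X_0) \geq r\,] \leq 2d\,m\exp\{-cr^2/m\}$ by a reflection/maximal-inequality bound on each coordinate. Taking $m = L^{1+\delta/2}$ gives travel-probability $\leq c L^{1+\delta/2}\exp\{-cL^{2}/L^{1+\delta/2}\} = cL^{1+\delta/2}\exp\{-cL^{1-\delta/2}\}$, and $P[\mathcal{N} > L^{1+\delta/2}] \leq c\exp\{-cL^{1+\delta/2}/L^{1-\delta}\} = c\exp\{-cL^{\delta/2+\delta}\}$. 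A union bound over the $k \leq L$ particles then yields $\mathbb{P}_{(x_j)}[\text{some particle reaches }\partial_i B_L] \leq cL\cdot\big(L^{1+\delta/2}e^{-cL^{1-\delta/2}} + e^{-cL^{\delta/2+\delta}}\big) \leq c(\delta)\exp\{-cL^{\delta/2}\}$, which is the claimed bound (the worst of the two exponents being the $L^{\delta/2}$-type term after absorbing polynomial factors).

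Let me restate the clean skeleton: (i) invoke Corollary~\ref{c:offsleep} / the Diaconis–Fulton representation to dominate by independent walks run for a controlled number of steps; (ii) prove the total-activity tail bound $P[\mathcal{N} > m] \leq c\exp\{-cm/k\}$ using the geometric sleeping clocks and a counting argument exploiting that the density $kL^{-d} = O(L^{-\delta})$ is small, so few wakings occur; (iii) on the event $\{\mathcal{N} \leq L^{1+\delta/2}\}$ each of the $k$ walks makes $\leq L^{1+\delta/2}$ steps, and a per-coordinate Azuma/reflection maximal inequality gives that such a walk reaches $\ell_\infty$-distance $L/4$ with probability $\leq c L^{1+\delta/2}\exp\{-cL^{1-\delta/2}\}$; (iv) combine with a union bound over particles and absorb polynomial prefactors into the exponent, adjusting $c(\delta)$.

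I expect step (ii) — the exponential tail on the total number of topplings/jumps before stabilization — to be the main obstacle. One must be careful that re-wakings do not cause a runaway: with $k$ particles spread in a box of volume $L^d$, the chance that a given jump lands on a currently-sleeping particle is small (heuristically $O(kL^{-d})$ per jump at the start, though clustering could inflate this), and one needs a genuine argument — e.g.\ a supermartingale built from (remaining geometric-clock credits) $+$ (number of active particles), or the thesis's stabilize-one-particle-at-a-time bound — to make ``few wakings'' rigorous without circularity. Everything else (the maximal inequality for the walk, the union bound, the bookkeeping of exponents in $\delta$) is routine. An alternative that sidesteps (ii) altogether: apply Corollary~\ref{c:offsleep} with stopping times $T_i$ equal to the first time the $i$-th walk exits a box of radius $L/8$ around its start; then the event in question forces some walk to exit, and — since after turning sleep off and running to $T_i$ the particles are all within $B(x_j, L/8) \subset$ a box of radius $3L/8 < L/2$ — actually the particles are already at $\partial_i B_L$ only if they exited, so the domination again reduces to $k$ independent exit probabilities; but a recurrent walk exits any finite box a.s., so this still needs the time to be cut off, bringing us back to controlling the clock — hence (ii) seems unavoidable in some form.
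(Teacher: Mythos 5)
Your plan hinges entirely on step (ii), the claim that the total number of jumps $\mathcal{N}$ before stabilization satisfies $\E[\mathcal{N}]\leq Ck$ with a tail $P[\mathcal{N}>m]\leq c\exp\{-cm/k\}$, and this is a genuine gap: you do not prove it, and as stated it is false under the hypotheses of Lemma~\ref{l:tiny_fraction}. The lemma allows an \emph{arbitrary} collection of $k=\lfloor L^{1-\delta}\rfloor$ particles in $B_L'$ --- in particular all $k$ particles may start at a single site. At fixation every particle is asleep and each site carries at most one sleeping particle, so the final positions are $k$ distinct sites, hence of diameter at least $ck^{1/d}$; since each jump moves a particle by one step, this forces $\mathcal{N}\geq c\,k^{1+1/d}$ deterministically. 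For $\delta$ small this exceeds your cutoff $L^{1+\delta/2}$ (indeed $k^{1+1/d}=L^{(1-\delta)(d+1)/d}\gg L^{1+\delta/2}$ whenever $\delta<2/(3d+2)$), so $P[\mathcal{N}>L^{1+\delta/2}]=1$ for that configuration and your splitting collapses. The heuristic you offer ("density $O(L^{-\delta})$, so few wakings") implicitly assumes the particles are spread out, which the lemma does not grant; and enlarging the cutoff toward $L^{2-\epsilon}$ (the most the maximal inequality tolerates) only makes the required activity bound harder, still with no proof in sight. Your own closing remark concedes that some control of the clock is unavoidable in your scheme --- that control is precisely what is missing.

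The paper avoids any bound on total activity. It uses the room available in $B_L\setminus B_L'$ to build $k$ \emph{disjoint} annuli $A_1,\dots,A_k$ of width $L^{\delta/2}$ around $B_L'$ (possible since $kL^{\delta/2}\leq L^{1-\delta/2}\ll L/4$), assigns one annulus to each particle, and applies Corollary~\ref{c:offsleep} with $T_j$ the hitting time of the mid-sphere of $A_j$ to relocate particle $j$ into the middle of its own annulus (during this sleep-free phase no particle can overshoot past its annulus, so none reaches $\partial_i B_L$). After relocation the particles sit in pairwise disjoint annuli, so until the first time some particle exits its own annulus there are no interactions at all, and each particle is a single sleepy walker that must make at least $cL^{\delta/2}$ jumps before sleeping in order to exit; the probability of that is $\exp\{-cL^{\delta/2}\}$, and a union bound over the $k$ particles gives \eqref{e:tiny_fraction}. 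If you want to salvage your approach you would need a per-configuration activity bound of strength comparable to this, which is essentially the hard content of the whole problem; the annulus/isolation trick is what lets the paper get away with only a single-particle estimate.
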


\begin{proof}
Recall that $d(x,y)$ stands for the supremum norm on $\Z^d$. We now define a sequence of disjoint annuli by
\begin{equation}
  \label{e:A_j}
  A_j = \{x \in \Z^d; d(x,B_L') \in ( j L^{\delta/2}, (j+1) L^{\delta/2}) \}, \text{ for $j = 0, \dots, k $}.
\end{equation}
Note that for $L > c$ all the sets $A_j$ are disjoint and contained inside $B_L \setminus B_L'$, see Figure~\ref{f:traps}.

\begin{figure}[h]
\centering
  \begin{tikzpicture}[scale=.5,font=\tiny]
    \draw[thick] (0,0) rectangle (10,10);
    \draw[thick] (1,1) rectangle (9,9);
    \draw[thick] (2,2) rectangle (8,8);
    \draw[thick] (3,3) rectangle (7,7);
    \draw[gray, very thin] (0.5,0.5) rectangle (9.5,9.5);
    \draw[gray, very thin] (1.5,1.5) rectangle (8.5,8.5);
    \draw[gray, very thin] (2.5,2.5) rectangle (7.5,7.5);
    \draw[fill, white] (9.1,9.1) rectangle (9.9,9.9); \node[below left] at (10.3,10) {$A_2$};
    \draw[fill, white] (8.1,8.1) rectangle (8.9,8.9); \node[below left] at (9.3,9) {$A_1$};
    \draw[fill, white] (7.1,7.1) rectangle (7.9,7.9); \node[below left] at (8.3,8) {$A_0$};
  \end{tikzpicture}
  \caption{An illustration of the annuli $A_j$ defined in \eqref{e:A_j}. As well as the middle rings (in gray) where the particles are stopped.}
  \label{f:traps}
\end{figure}
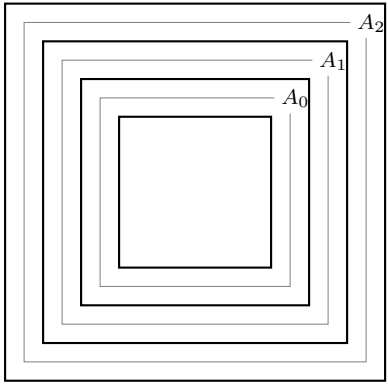

We chose the number of annuli to equal the number of particles so that we can associate them and define the following stopping times
\begin{equation}
  S_j = \inf \{t \geq 0; d(X^j_t, B_L') = \lfloor (j + \tfrac 12) L^{\delta/2} \rfloor \}.
\end{equation}
So that each particle $X^j$ is stopped in the middle of its corresponding annulus $A_j$.

Finally, using Corollary~\ref{c:offsleep},
\begin{equation}
  \begin{split}
    \mathbb{P}_{(x_j)_{j = 1}^{k}} \big[ & \text{some particle
      reaches $\partial_i B_L$} \big]\\
    & \;\; \leq \sup_{\{y_j\}, j \leq k \text{ such that} \atop d(y_j,
      B_L') = \lfloor (j + 1/2) L^{\delta/2} / 2 \rfloor}
    \mathbb{P}_{(x_j)_{j = 1}^{k}}
    \big[\text{some particle $X^j$ reaches $\partial_i A_j$} \big]\\
    & \;\; \leq k \exp\{ - c L^{\delta/2} \},
  \end{split}
\end{equation}
finishing the proof of the lemma by taking $c(\delta)$ large enough.
\end{proof}

In the above lemma, we strongly relied on the fact that the number of particles is much smaller than the side length of the cube, in order to show that they get absorbed before exiting. In what follows we will prove a theorem that deals with a much larger number of particles (now of smaller order than the volume of the cube) but which follows a Poisson distribution.

\begin{theorem}
  \label{t:triggering}
  Fix $\epsilon > 0$. Now let $B_L = [0,L)^d \cap \Z^d$ be a box of side length $L$ and define its kernel $B_L' = [L^\epsilon, L - L^\epsilon)^d \cap \Z^d$. If $\mathbb{P}_L$ denotes the law of the activated random walks process starting from a Poisson point process of particles in $B_L'$ with density $L^{-\epsilon}$ then
  \begin{equation}
    \label{e:triggering}
    \mathbb{P}_L[ \text{some particle reaches $\partial_i B_L$} ] \leq c (\epsilon) \exp \{ - c L^{-\epsilon/4d} \}.
  \end{equation}
\end{theorem}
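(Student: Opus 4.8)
The plan is to pass to an intermediate scale $\ell$, show that at this scale the initial Poisson cloud is everywhere sparse, apply Lemma~\ref{l:tiny_fraction} inside each block of side $\ell$, and then glue the blocks together using the abelian/monotone structure of the Diaconis--Fulton representation. Fix $\ell=\lceil L^{\epsilon/d}\rceil$ and tile $\mathbb{Z}^d$ by the cubes $R_z=\ell z+[0,\ell)^d$, $z\in\mathbb{Z}^d$, keeping the $O(L^d)$ of them that meet $B_L$; let $\tilde R_z\supset R_z$ be the concentric cube of side $2\ell$, so that $R_z$ is (a translate of) the half-kernel of $\tilde R_z$ in the sense of Lemma~\ref{l:tiny_fraction}. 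Under $\mathbb{P}_L$ the number $N_z$ of particles initially in $R_z$ is Poisson with mean at most $\ell^dL^{-\epsilon}=O(1)$, so, with $k:=\lfloor\ell^{1/2}\rfloor$, a Chernoff bound gives $\mathbb{P}_L[N_z>k]\le e^{-c\ell^{1/2}}$; a union bound over the relevant blocks yields
\[
  \mathbb{P}_L[G^c]\le c(\epsilon)\,e^{-cL^{\epsilon/2d}},\qquad
  G:=\big\{N_z\le k\ \text{for every block }R_z\text{ meeting }B_L\big\},
\]
which is amply smaller than what is needed.

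It remains to control $\mathbb{P}_L[\text{some particle reaches }\partial_iB_L;\,G]$. Call $R_z$ \emph{bad} if, running the dynamics with \emph{only} the particles initially in $R_z$ present, one of them reaches $\partial_i\tilde R_z$. On $G$ the block $R_z$ carries at most $k\le\lfloor(2\ell)^{1/2}\rfloor$ particles, located in the half-kernel of $\tilde R_z$; padding with dummy particles up to $\lfloor(2\ell)^{1/2}\rfloor$ and combining monotonicity (Theorem~\ref{t:monotonicity}) with Lemma~\ref{l:tiny_fraction} (applied to $\tilde R_z$ with $\delta=\tfrac12$) shows that, conditionally on the initial configuration, $R_z$ is bad with probability at most $c\,e^{-c(2\ell)^{1/4}}\le c\,e^{-cL^{\epsilon/4d}}$. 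Moreover the event that $R_z$ is bad depends only on the instructions in $\tilde R_z$ and on the driving randomness of the particles started in $R_z$; hence, colouring $z$ by parity --- so that the cubes $\tilde R_z$ of a fixed colour are pairwise disjoint --- makes the badness events of like-coloured blocks independent, and this persists after conditioning on the product event $G$. The crux of the argument, and the step I expect to be the main obstacle, is to upgrade these isolated estimates to the full dynamics: stabilising the configuration block by block (legitimate by commutativity, Theorem~\ref{t:commutative}), one must show that activity can be transported across the wide collar $B_L\setminus B_L'$ only through a connected \emph{cascade} of blocks, each of which is either bad or fails to absorb, within a slightly enlarged cube, the particles it has inherited from its predecessors in the cascade. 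The delicate point is to keep this inheritance under control --- the number of particles successively spilling from one block to the next must be shown to grow only polynomially along the cascade, so that each step still falls within the scope of Lemma~\ref{l:tiny_fraction} at a mildly larger scale and therefore has conditional probability $\le c\,e^{-cL^{\epsilon/4d}}$, uniformly.

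Granting this, the proof concludes with a Peierls-type count. On $G$, a particle reaching $\partial_iB_L$ forces a cascade as above joining a block meeting $B_L'$ to a block meeting $\partial_iB_L$, and since $\dist(B_L',\partial_iB_L)=L^\epsilon-1\gg\ell$ such a cascade uses at least $n_0$ blocks with $n_0\ge1$ (in fact $n_0$ is a positive power of $L$, though only $n_0\ge1$ will be used). There are at most $L^dC^n$ cascades on $n$ blocks rooted in the $O(L^d)$ relevant cubes, and by the independence of like-coloured blocks a fixed cascade on $n$ blocks is realised with probability at most $\big(c\,e^{-cL^{\epsilon/4d}}\big)^{n/2^d}$. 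For $L$ large, $C\big(c\,e^{-cL^{\epsilon/4d}}\big)^{1/2^d}<\tfrac12$, so summing $L^dC^n\big(c\,e^{-cL^{\epsilon/4d}}\big)^{n/2^d}$ over $n\ge n_0\ge1$ gives $\mathbb{P}_L[\text{some particle reaches }\partial_iB_L;\,G]\le c(\epsilon)\,e^{-cL^{\epsilon/4d}}$. Together with the estimate on $\mathbb{P}_L[G^c]$, this yields \eqref{e:triggering}.
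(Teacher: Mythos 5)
Your proposal has a genuine gap, and it is exactly the step you flag yourself: the passage from ``isolated'' block estimates to the full dynamics. Your event ``$R_z$ is bad'' is defined by running the dynamics with \emph{only} the particles initially in $R_z$; but by monotonicity (Theorem~\ref{t:monotonicity}) a block that is good in isolation can still emit particles in the true dynamics once invaders from a neighbouring block arrive and wake its sleepers, and there is no a priori bound on how many particles spill from one block into the next. Consequently the implication ``some particle reaches $\partial_i B_L$ on $G$ $\Rightarrow$ there is a connected cascade of blocks each of which is bad (or fails to absorb its inheritance)'' is not established, and the quantitative claim that the inherited particle numbers grow only polynomially along the cascade is precisely the hard part: because particles are conserved and sleepers can be reactivated arbitrarily late, activity has long-range dependence in time, which is why a naive Peierls/contour count of ``bad blocks'' does not go through for this model (this is the obstruction the paper's introduction describes). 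The subsequent independence claim is also fragile: once a block's badness is replaced by ``fails to absorb what it inherits,'' the event depends on the dynamics of other blocks' particles, so the factorization $\big(c\,e^{-cL^{\epsilon/4d}}\big)^{n/2^d}$ over like-coloured blocks is no longer justified; and since your enlarged cubes $\tilde R_z$ overlap, even the isolated-badness events of adjacent blocks share instructions.

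The paper's proof of Theorem~\ref{t:triggering} avoids any cascade bookkeeping. It uses Corollary~\ref{c:offsleep} to ``turn off sleeping'' and lets each particle run until the stopping time $S_j$ of Lemma~\ref{l:fall}, at which it sits in one of the half-kernels $C_i'$ of a \emph{disjoint} paving of side $\ell=\lfloor L^{\epsilon/d}\rfloor$ (Lemma~\ref{l:fall} also shows no particle travels far, hence none exits $B_L$, before $S_j$). The stopped cloud is again Poisson, so with high probability each half-kernel holds at most $L^{\epsilon/2d}$ particles, and then a single application of Lemma~\ref{l:tiny_fraction} (with $\delta=1/2$ at scale $\ell$, union over boxes) finishes the proof: in that lemma every particle is assigned its \emph{own} annulus, all annuli across all boxes being pairwise disjoint, so on the event that no particle leaves its private annulus the particles never interact at all and the single-walker estimate is self-consistent. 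In other words, the interaction problem is killed in one shot at the level of individual particles, not managed block-by-block; your Chernoff step is a fine (and simpler) substitute for the paper's redistribution only up to the point where the particles must also be placed in the half-kernels of a disjoint paving, which is what the stopping times $S_j$ achieve and what your overlapping $\tilde R_z$'s do not. To repair your argument you would either have to prove the cascade/inheritance control you postulate (which is essentially a new and difficult estimate) or revert to the per-particle annulus scheme of Lemma~\ref{l:tiny_fraction} after a redistribution step as in the paper.
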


In the proof of the above theorem we will consider a fine paving of the lattice $\Z^d$ by disjoint boxes, allowing us to use the previous Lemma~\ref{l:tiny_fraction} for each of them. Let us use this opportunity to introduce some definitions which will be useful through the rest of the paper.

We say that a collection of boxes $\{C_i\}_{i \in I}$ is a disjoint paving of $\Z^d$ with side length $\ell$ if there is some $x \in \Z^d$ such that
\begin{equation}
 \label{e:paving}
 \{C_i\}_{i \in I} = \{[0,\ell)^d \cap \Z^d + \ell k + x\}_{k \in \Z^d}.
\end{equation}

For each box $C_i = [0, \ell)^d \cap \Z^d + \ell k$, in the above paving we pick its half-kernel $C_i' = [\ell/4, 3\ell/4) \cap \Z^d + \ell k$ as in Lemma~\ref{l:tiny_fraction} above.


Before going to the proof of Theorem~\ref{t:triggering}, we will analyze what we call the hopping process of a simple random walk and its hitting time of the half-kernels $\cup_{i \in I} C_i'$.

The hopping process will be nothing more than a discrete time Markov chain, obtained when one only looks at the random walk position at some multiples of a fixed constant. Although one could also prove Theorem~\ref{t:triggering} without using the hopping process, we decided to employ it here, since it will be again important in Section~\ref{s:sieving}.

The next lemma deals with a single random walker and will be used to prove Theorem~\ref{t:triggering} and in other parts of the article.

\begin{lemma}
\label{l:fall}
Fix the paving $\{C_i\}_{i \in I}$ and the half-kernels $\{C_i'\}_{i \in I}$ as above. Now fix the arithmetic sequence $t_l = l r$, for $l \geq 0$, where $r \geq \ell^2$ is given. Then the stopping time
\begin{equation}
  \label{e:S_time}
  S = \inf \{ t_l; l \geq 0 \text{ and } X_{t_l} \in \cup_{i \in I} C_i' \}
\end{equation}
is $P_x$-almost surely finite and
\newconstant{c:shake}
\newconstant{c:shake2}
\begin{equation}
  \label{e:not_far}
  \sup_{x \in \Z^d} P_x[\text{$d(X_t,x) \geq D$ for some $t \leq S$}] \leq \useconstant{c:shake} \exp \big\{ - \useconstant{c:shake2} \frac{D}{\sqrt{r}} \big\}.
\end{equation}
\end{lemma}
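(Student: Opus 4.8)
Write $\mathcal K=\bigcup_{i\in I}C_i'$ for the union of the half‑kernels and let $Y_l:=X_{t_l}$ be the hopping process, i.e.\ the discrete‑time chain obtained by sampling the walk at the times $t_l=lr$, so that $S=\inf\{t_l:\,Y_l\in\mathcal K\}$. All estimates below are uniform in the starting point, which gives the $\sup_x$ in \eqref{e:not_far}. The argument has three parts: a uniform lower bound for one hopping step to land in $\mathcal K$; an exponential tail for $S$ measured in number of steps; and a Gaussian deviation bound for the walk over those steps.

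First I would prove that there is $p_0=p_0(d)>0$ with $P_w[X_r\in\mathcal K]\ge p_0$ for every $w\in\Z^d$, every $\ell$ and every $r\ge\ell^2$. By the Markov property at time $r-\ell^2$, $P_w[X_r\in\mathcal K]=E_w\bigl[P_{X_{r-\ell^2}}[X_{\ell^2}\in\mathcal K]\bigr]\ge\inf_{w'}P_{w'}[X_{\ell^2}\in\mathcal K]$, so it suffices to treat $r=\ell^2$. For $\ell$ larger than a constant this follows from the local central limit theorem for the continuous‑time walk: $P_w[X_{\ell^2}=z]\ge c(d)\ell^{-d}$ whenever $d(z,w)\le\ell$, while any box of side $\ell$ meets the $\ell$‑periodic set $\mathcal K$ in at least $c(d)\ell^d$ points (the paving has product structure and a length‑$\ell$ window meets each coordinate pattern in $\lfloor\ell/2\rfloor$ sites); multiplying and summing over such $z$ yields a constant lower bound. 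The finitely many small values of $\ell$ are handled by observing that $w\mapsto P_w[X_{\ell^2}\in\mathcal K]$ is strictly positive and $\ell$‑periodic in $w$, so one takes $p_0$ to be the infimum over all cases. Applying the Markov property successively at $t_0,t_1,\dots$ then gives $P_x[S>t_l]\le(1-p_0)^l$, so $S<\infty$ $P_x$‑a.s., and $\lceil S/r\rceil$ is stochastically dominated by a geometric variable of parameter $p_0$.

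To conclude I would combine this with the standard deviation bound for the continuous‑time walk,
\[\sup_w P_w\bigl[\sup\nolimits_{s\le T}d(X_s,w)\ge u\bigr]\ \le\ C\exp\{-c\min(u^2/T,\,u)\}\qquad(T,u>0),\]
which follows by conditioning on whether the Poisson number of jumps in $[0,T]$ exceeds twice its mean (Poisson tail on one event, Azuma--Hoeffding for the embedded walk on the other). Fix $D$; we may assume $D\ge2\sqrt r$, since otherwise \eqref{e:not_far} is trivial after enlarging $\useconstant{c:shake}$. Set $m=\lceil D/\sqrt r\rceil$, so $rm\le\sqrt r\,D+r\le2\sqrt r\,D$, and bound
\[P_x\bigl[\sup\nolimits_{t\le S}d(X_t,x)\ge D\bigr]\ \le\ P_x[S>rm]+P_x\bigl[\sup\nolimits_{t\le rm}d(X_t,x)\ge D\bigr].\]
The first term is at most $(1-p_0)^m\le\exp\{-|\log(1-p_0)|\,D/\sqrt r\}$ by the previous paragraph. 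For the second, the displayed deviation bound with $T=rm\le2\sqrt r\,D$ and $u=D$ gives $C\exp\{-cD/(2\sqrt r)\}$, using $\min(D^2/(rm),D)\ge D/(2\sqrt r)$. Adding the two contributions and relabelling the constants gives \eqref{e:not_far}.

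The only genuinely delicate point is the uniform positive lower bound of the second paragraph: it is needed uniformly in the scale $\ell$ and in all $r\ge\ell^2$, which is exactly why the reduction to $r=\ell^2$ via the Markov property, followed by a local‑CLT estimate and a compactness argument for the small scales, is the natural route. Everything afterwards is routine given classical random‑walk tail estimates.
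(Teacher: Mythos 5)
Your proof is correct and follows essentially the same route as the paper: a uniform lower bound for a single hopping step to land in $\cup_i C_i'$ (the paper obtains it by projecting to the torus and invoking the invariance principle, you via a heat-kernel/local-CLT lower bound at time $\ell^2$ plus counting kernel points in an $\ell$-box), yielding a geometric tail for $S$ in units of $r$, combined with the Azuma-plus-Poisson-jump-count displacement estimate and a cutoff at roughly $D/\sqrt{r}$ hopping steps, exactly as in the paper's choice $s = D/\sqrt{r}$. No gaps worth flagging.
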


\begin{proof}
We denote by $\pi_\ell$ the canonical projection from $\Z^d$ to the torus $\mathbb{T}^d_\ell = (\Z/\ell \Z)^d$. Clearly $\pi_\ell$ induces an isomorphism of each half-kernel $C_i'$ into a fixed set $C' \subset \mathbb{T}^d_\ell$.

Using the invariance principle on the torus, since $r \geq \ell^2$ we see that
\begin{equation}
  P_x \big[ X_{t_{l+1}} \in \mcup_{i \in I} C_i' \big| X_{t_l} = y\big] \geq c > 0, \text{ uniformly over $y \in \Z^d$}.
\end{equation}
In view of the above, it becomes clear that $S$ is almost surely finite. Moreover, by the Markov property,
\begin{equation}
  \label{e:S_fast}
  \sup_{x \in \Z^d} P_x[ S \geq s r ] \leq c \exp \{- c s\}.
\end{equation}

We now need to bound the maximal distance traveled by the walker during this time.
For this we use Azuma's inequality for the maximum of a martingale (see for instance \cite{McD98} (41) p.28), and comparing the continuous random walk with the discrete one, we can bound
\begin{align}
  \nonumber
  P_0 \big[ & \max_{t \leq sr} d(X_t, 0) \geq D \big]\\
  & \begin{array}{e}
    & \leq & 2d \exp \Big\{ -\frac{c D^2}{sr} \Big\} + P_0 \big[ \text{$X_t$ does more than $2sr$ jumps before time $sr$} \big]\\[4mm]
    & \leq & c \exp \Big\{ -c \frac{D^2}{sr}\Big\} + c \exp \big\{ - c sr \big\}.
  \end{array}
\end{align}

We now join the above with \eqref{e:S_fast} with $s = D/\sqrt{r}$ to finish the proof of the lemma.
\end{proof}

We are now in position to provide the

\begin{proof}[Proof of Theorem~\ref{t:triggering}]
We first choose a paving $\{ C_i \}_{i \in I}$ of $\Z^d$ with side length $\ell = \lfloor L^{\epsilon/d} \rfloor$. Let $\sum_{j \in J} \delta_{x_j}$ be the realization of a Poisson point process of intensity $L^{-\epsilon} \1{B'_L}$ as in the statement of the theorem. We now define, for $r = \ell^2$, the stopping time $S_j$ as in \eqref{e:S_time} corresponding to the $j$-th particle.

We will use Corollary~\ref{c:offsleep} in order to let the particles walk without sleeping (and consequently not interacting), until the stopping time $S_j$. Note that each particle stops at its own time. At the end of this procedure we hope that the particles will be in such a configuration that allows us to use Theorem~\ref{l:tiny_fraction}

The first issue we have to rule out is that some particle $X_j$ escapes $B_L$ before it stops (at $S_j$). This can be bounded using Lemma~\ref{l:fall} by
\begin{equation}
  \label{e:escape_before_Sj}
  \mathbb{P}_L [\text{some $X_j$ escapes $B_L$ before $S_j$}] \leq \mathbb{P}_L[\# J \geq L^d] + \useconstant{c:shake} L^d \exp \{ - \useconstant{c:shake2} L^{\epsilon - \epsilon/2d}\}.
\end{equation}
The first term is bounded by a simple large deviations estimate and all of the above can be accommodated in the error of \eqref{e:triggering}.

We now claim that
\begin{display}
  \label{e:points_kernels}
  $\omega = \sum_{j \in J} \delta_{X_{S_j}}$ is a Poisson point process, with intensity measure $\nu$ that:\\1) has support on $\cup_{i \in I} C_i'$ and\\2) satisfies $\nu(C_i') \leq \ell^d L^{-\epsilon} \leq 1$.
\end{display}
To see why $\omega$ is a Poisson point process, one simply observe that $\omega$ is a random mapping of another Poisson process, where each particle is mapped independently from each other. To estimate $\nu(C_i')$ we first note that we can stochastically dominate $\omega$ by the process $\omega'$, obtained in the same way as $\omega$, but starting with a Poisson point process of particles on the entire lattice (with intensity $L^{-\epsilon}$). Thus, by ergodicity of $\omega'$, we can infer that its intensity measure satisfies $\nu'(C_i') = \ell^d L^{-\epsilon}$. This together with the domination of $\omega$ by $\omega'$, establishes \eqref{e:points_kernels}.

We now show that with high probability no half-kernel $C'_i$ ends up with more than $L^{\epsilon/2d}$ particles. For this, we use \eqref{e:points_kernels} and a tail estimate on a Poisson($1$) random variable, to obtain that for any given $i \in I$,
\begin{equation}
  \label{e:few_inside}
  \mathbb{E} \Big[ \motimes_{j \in J} P_{x_j} \big[ \# \{j \in J; X^j_{S_j} \in C_i'\} \geq L^{\epsilon/2d} \big] \Big] \leq c \exp \{ - c L^{\epsilon/2d} \},
\end{equation}
where $\mathbb{E}$ stands for the expectation governing $(x_j)_{j \in J}$.

Finally, fix any possible configuration $(y_j)_{j \in J}$ for the $(X^j_{S_j})_{j \in J}$'s such that $\#\{j \in J; y_j \in C_i'\} \leq L^{\epsilon/2d}$ for every $i \in I$. Then we use Lemma~\ref{l:tiny_fraction} to obtain that
  \begin{equation}
    \label{e:tiny_fraction2}
    \mathbb{P}_{(y_j)_{j = 1}^{k}} \big[ \text{some particle reaches $\partial_i B_L$} \big] \leq c(\epsilon) \exp \{ - c L^{\epsilon/4d} \}.
  \end{equation}
This, combined with \eqref{e:escape_before_Sj} and \eqref{e:few_inside} establishes the desired result.
\end{proof}

\section{Renormalization}
\label{s:renorm}

In this section we present the core argument for our main Theorem~\ref{t:main}. For this we will assume the validity of an auxiliary result, the so-called Sieving Lemma, which will be later proved in Section~\ref{s:sieving}.

To establish our main result we follow a multi-scale renormalization argument, which employs the following scale sequence.
Fix $L_0 = 10000$ and define
\begin{equation}
  \label{e:Lk}
  L_{k + 1} = \lfloor L_k^\gamma \rfloor^{2} L_k, \text{ for $k \geq 0$,}
\end{equation}
where $\gamma = 1/10$ from now on.

The values $10000$ and $1/10$ are not intrinsically relevant for the proof and there are other values which would work equally well.

We should think of this scale sequence as growing roughly like $L_{k+1} \sim L_k^{1.2}$.
This can be made precise observing that
\newconstant{c:growth}
\begin{equation}
  \label{e:growth}
  L_{k}^{\gamma} \geq \lfloor L_k^{\gamma} \rfloor \geq \useconstant{c:growth} L_k^{\gamma}, \text{ for every $k \geq 1$}.
\end{equation}
which implies
\begin{equation}
  \label{e:Lk_growth}
  L_k \geq \useconstant{c:growth}^k L_0^{(1+2\gamma)^k} \geq \useconstant{c:growth}^k L_0^{1.2^k}.
\end{equation}

We also introduce the set
\begin{equation}
 \label{e:Mk}
 M_k = \{k\} \times \mathbb{Z}^d, \text{ for $k \geq 0$,}
\end{equation}
which serves to index boxes at scale $k$,
\begin{equation}
 C^0_m = [0, L_k)^d \cap \mathbb{Z}^d + L_k i, \text{ where $m = (k,i) \in M_k$.}
\end{equation}
Note that $\{C^0_m\}_{m \in M_k}$ form a disjoint tiling of $\mathbb{Z}^d$ with side length $L_k$.

We will also need the intermediate scale
\begin{equation}
 \label{e:Rk}
 R_{k+1} = \lfloor L_k^\gamma \rfloor L_k \; \; \big(= L_{k+1}/\lfloor L_k^\gamma \rfloor \big), \text{ for $k \geq 0$,}
\end{equation}
which lies between $L_k$ and $L_{k+1}$.
\newconstant{c:Rsmall}
Note that for $k \geq \useconstant{c:Rsmall}$, we have that $5 R_k \leq L_k$, so that we can define the (non-empty) boxes
\begin{equation}
  \label{e:C12m}
  C^1_m = [R_k, L_k - R_k)^d \cap \Z^d + L_k \cdot i \quad \text{ and } \quad C^2_m = [2R_k, L_k - 2R_k)^d \cap \Z^d + L_k \cdot i,
\end{equation}
where again $m = (k,i) \in \mathbb{Z}^d$.
Note that $C^2_m \subset C^1_m \subset C^0_m$ for every $m \in M_k$, see Figure~\ref{f:boxes}.

The boxed $C^1$ and $C^2$ are kernels (in a sense similar to that introduced in Theorem~\ref{t:triggering}). The annuli between $C^2 \setminus C^1$ and $C^1 \setminus C^0$ can be thought as buffer zones that start without particles.

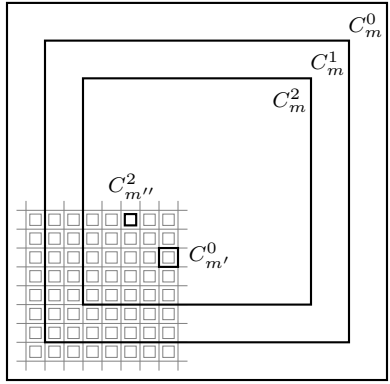
\begin{figure}[h]
\centering
  \begin{tikzpicture}[scale=.5,font=\tiny]
    \foreach \x in {1,...,9} { \draw[color=gray,very thin] (0.5*\x,0.25) -- (0.5*\x,4.75); }
    \foreach \y in {1,...,9} { \draw[color=gray,very thin] (0.25,0.5*\y) -- (4.75,0.5*\y); }
    \foreach \x in {1,...,8} { \foreach \y in {1,...,8} { \draw[color=gray,very thin] (0.5*\x + .1, 0.5*\y + .1) rectangle (0.5*\x + .4, 0.5*\y + .4); } }
    \draw[thick] (0,0) rectangle (10,10); \node[below left] at (10.2,10) {$C^0_m$};
    \draw[thick] (1,1) rectangle (9,9); \node[below left] at (9.2,9) {$C^1_m$};
    \draw[thick] (2,2) rectangle (8,8); \node[below left] at (8.2,8) {$C^2_m$};
    \draw[thick] (4,3) rectangle (4.5,3.5); \node[right] at (4.5,3.3) {$C^0_{m'}$};
    \draw[thick] (3.1,4.1) rectangle (3.4,4.4); \node[above] at (3.3,4.5) {$C^2_{m''}$};
  \end{tikzpicture}
  \caption{An illustration of the boxes $C^q_m$ ($q = 0, 1, 2$) and sub-boxes at the previous scale.}
  \label{f:boxes}
\end{figure}

We can now define the main quantity we want to bound
\begin{equation}
  \label{e:pk}
  \begin{split}
    p_k(\zeta) & = \sup_{m \in M_k} \mathbb{P}_{\zeta \1{C_m^2}}\big[ \text{some particle reaches $\partial_i C^0_m$} \big]\\
    & = \mathbb{P}_{\zeta \1{C^2_{(k,0)}}}\big[ \text{some particle reaches $\partial_i C^0_{(k,0)}$} \big].
  \end{split}
\end{equation}
Recall the definition of $\mathbb{P}_\sigma$ in \eqref{e:Pmu}.

Using the above definition, we can obtain the following consequence of Theorem~\ref{t:triggering}
\begin{corollary}
\label{c:trigger}
For $k \geq c$, we have
\begin{equation}
  \label{e:cor_trigger}
  p_k(L_k^{-1/2}) \leq c \exp\{-c L_k^{\frac{1}{8d}}\}.
\end{equation}
\end{corollary}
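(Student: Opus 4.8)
The plan is to obtain \eqref{e:cor_trigger} by directly specializing Theorem~\ref{t:triggering} to the box $C^0_{(k,0)} = [0,L_k)^d \cap \Z^d$, with a suitable choice of the exponent $\epsilon$ so that the two kernels match up and the initial density becomes $L_k^{-1/2}$. Concretely, I would set $L = L_k$ and choose $\epsilon = 1/2$ in Theorem~\ref{t:triggering}. With that choice the density $L^{-\epsilon} = L_k^{-1/2}$ is exactly the one appearing in $p_k(L_k^{-1/2})$, and the conclusion of Theorem~\ref{t:triggering} reads
\begin{equation}
  \mathbb{P}_{L_k}[\text{some particle reaches }\partial_i B_{L_k}] \leq c(\tfrac12)\exp\{-c L_k^{\,\epsilon/4d}\} = c\exp\{-c L_k^{\,1/8d}\},
\end{equation}
which is precisely the right-hand side of \eqref{e:cor_trigger}.

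The one point that needs care is that the kernel in Theorem~\ref{t:triggering} is $B_L' = [L^\epsilon, L-L^\epsilon)^d\cap\Z^d$, whereas $p_k$ is defined via the smaller kernel $C^2_{(k,0)} = [2R_k, L_k - 2R_k)^d \cap \Z^d$ from \eqref{e:C12m}. So I would first check that $C^2_{(k,0)} \subseteq B'_{L_k}$ for all large $k$, i.e.\ that $2R_k \geq L_k^{1/2}$. By \eqref{e:Rk} and \eqref{e:growth}, $R_k = \lfloor L_{k-1}^\gamma\rfloor L_{k-1} \geq \useconstant{c:growth} L_{k-1}^{1+\gamma}$, while $L_k = \lfloor L_{k-1}^\gamma\rfloor^2 L_{k-1} \leq L_{k-1}^{1+2\gamma}$; hence $R_k \geq \useconstant{c:growth} L_k^{(1+\gamma)/(1+2\gamma)}$, and since $(1+\gamma)/(1+2\gamma) = (1.1)/(1.2) > 1/2$ with $\gamma = 1/10$, we indeed get $2R_k \geq L_k^{1/2}$ once $k$ is large enough. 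Consequently $C^2_{(k,0)} \subseteq B'_{L_k}$, and a Poisson point process of intensity $L_k^{-1/2}$ restricted to $C^2_{(k,0)}$ is stochastically dominated by one of the same intensity restricted to $B'_{L_k}$.

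With that inclusion in hand, the monotonicity Theorem~\ref{t:monotonicity} (applied through the standard coupling of the two Poisson processes: the configuration on $C^2_{(k,0)}$ is obtained from the one on $B'_{L_k}$ by deleting particles, so the accumulated activity, and in particular the event that some particle reaches $\partial_i C^0_{(k,0)}$, can only decrease) yields
\begin{equation}
  p_k(L_k^{-1/2}) = \mathbb{P}_{L_k^{-1/2}\1{C^2_{(k,0)}}}[\text{some particle reaches }\partial_i C^0_{(k,0)}] \leq \mathbb{P}_{L_k}[\text{some particle reaches }\partial_i B_{L_k}],
\end{equation}
and combining with the displayed bound from Theorem~\ref{t:triggering} finishes the proof, after absorbing constants and enlarging the threshold on $k$ so that both ``$L > c$''-type requirements of Theorem~\ref{t:triggering} and the inequality $2R_k \geq L_k^{1/2}$ hold. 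I do not expect any genuine obstacle here; the only mild subtlety is the bookkeeping that confirms $C^2_{(k,0)}$ sits inside the kernel $B'_{L_k}$, which is why I would spell out the exponent comparison $(1+\gamma)/(1+2\gamma) > 1/2$ explicitly.
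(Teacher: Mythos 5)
Your proposal is correct and follows essentially the same route as the paper: choose $\epsilon = 1/2$ in Theorem~\ref{t:triggering}, verify via the growth estimate $R_k \geq \useconstant{c:growth} L_k^{(1+\gamma)/(1+2\gamma)} = \useconstant{c:growth} L_k^{11/12} \gg L_k^{1/2}$ that $C^2_{(k,0)} \subseteq B'_{L_k}$, and then conclude by monotonicity (Theorem~\ref{t:monotonicity}) applied to the coupled Poisson processes. The only difference is cosmetic: you spell out the exponent comparison and the domination step slightly more carefully than the paper does.
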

The above corollary will be used to trigger the recursion relation on the $p_k$'s in Theorem~\ref{t:main} below.

\begin{proof}
Note first that
\begin{equation}
  R_k = \frac{L_k}{\lfloor L_{k-1}^\gamma \rfloor} = \lfloor L_{k-1}^\gamma \rfloor L_{k-1} \overset{\eqref{e:growth}}\geq \useconstant{c:growth} L_{k-1}^{1 + \gamma} \geq \useconstant{c:growth} L_k^{\frac{1 + \gamma}{1 + 2\gamma}} = \useconstant{c:growth} L_k^{11/12},
\end{equation}
which grows much faster than $L^{1/2}$. Therefore, we choose $\epsilon = 1/2$ in Theorem~\ref{t:triggering}.

Observe that $C_{(k,0)}^0 = B_{L_k}$ and for $k \geq c$, we have $R_k \leq \sqrt{L_k}$, so that $C_{(k,0)}^2 \subseteq B'_{L_k}$ (see definition in Theorem~\ref{t:triggering}). Then, by Theorem~\ref{t:monotonicity},
\begin{equation}
  \begin{split}
    p_k(L_0^{-1/2}) & = \mathbb{P}_{L_0^{-1/2} \1{C_{(0,0)}^2}}
    [\text{some particle reaches $\partial_i C_{(0,0)}^0$}]\\
    & \overset{}\leq \mathbb{P}_{L_0^{-1/2} \1{B'_{L_k}}}
    [\text{some particle reaches $\partial_i C_{(k,0)}^0$}] \leq c \exp\{-c L_0^{\frac{1}{8d}}\},
  \end{split}
\end{equation}
finishing the proof of the Corollary.
\end{proof}

The next result improves on the one above, since it works for non-vanishing densities.

\medskip

Throughout our proof, we will employ an argument called ``sprinkling'', in which a slight change in the density of the system compensates for the interdependence between boxes. For this reason, we introduce the sequence of densities
\begin{equation}
  \label{e:rhok}
  \zeta_k = \zeta_0 \big(1 - \frac{1}{4}\sum_{j=1}^k \frac{1}{j^2}\big), \text{ for any given $\zeta_0 > 0$.}
\end{equation}
The exact choice of $\zeta_k$'s is not crucial for our arguments to work. What we need from this sequence is that $\inf_k \zeta_k \geq \zeta_0/2$ and that $\zeta_k - \zeta_{k-1}$ is sufficiently large, so that we can apply a large deviations estimate in the proof of the Sieving Lemma~\ref{l:couple2} in Section~\ref{s:sieving}.

The following result is the key ingredient to prove our main theorem. It relates the probabilities $p_{k+1}$ with the probabilities in the previous scale $p_k$.

\newconstant{c:main1}
\newconstant{c:main3}
\newconstant{c:main4}
\begin{theorem}
\label{t:main_aux}
There exists $\useconstant{c:main1}, \useconstant{c:main3}, \useconstant{c:main4}$ such that, if $k \geq \useconstant{c:main1}$ and $\zeta_0 \in \big(8(k+1)^2 L_k^{-\gamma/3}, 1\big)$, we have
\begin{equation}
  \label{e:main_aux}
  p_{k+1}(\zeta_{k+1}) \leq \Big( \frac{L_{k+1}}{L_k} \Big)^{2d} p_k(\zeta_k)^2 + \useconstant{c:main3} \exp\{ - \useconstant{c:main4} L_k^{\gamma/3}\}, \text{ for $k \geq 0$}.
\end{equation}
\end{theorem}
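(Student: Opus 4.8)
The plan is to show that escape at scale $k+1$ forces at least two ``bad'' scale-$k$ sub-boxes, to bound the conditional probability of each such occurrence by $p_k(\zeta_k)$ using sprinkling and the Sieving Lemma, and to conclude by a union bound over pairs of sub-boxes. Concretely, fix $m=(k+1,0)$ and let $\{C^0_{m'}\}_{m'\in\mathcal I}$ be the $N:=(L_{k+1}/L_k)^d$ scale-$k$ boxes tiling $C^0_m$; call $m'$ \emph{bad} if during the evolution some particle visits $\partial_i C^0_{m'}$. The geometric input is that, on the event defining $p_{k+1}(\zeta_{k+1})$, there are at least two distinct bad sub-boxes: a particle can leave $C^2_m$, where all particles start, only after crossing the internal boundary of some sub-box meeting $C^2_m$, and it can reach $\partial_i C^0_m$ only after crossing the internal boundary of some sub-box meeting $\partial_i C^0_m$; these two sub-boxes are distinct because $C^2_m$ and $\partial_i C^0_m$ are separated by the two buffer annuli, each of width $R_{k+1}=\lfloor L_k^\gamma\rfloor L_k$, which is much larger than $L_k$. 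Hence $p_{k+1}(\zeta_{k+1})\leq \mathbb{P}_{\zeta_{k+1}\1{C^2_m}}\big[\text{at least two sub-boxes are bad}\big]$.

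Next I would reveal the sub-boxes one at a time in a fixed order spiralling outward from the centre, the abelian/monotonicity structure of the Diaconis--Fulton representation making this ordering legitimate. When a sub-box $m'$ is revealed, turn off its sleep instructions (Corollary~\ref{c:offsleep}) and let every particle that ever enters $C^0_{m'}$ walk until it either leaves $C^0_{m'}$ again or reaches $C^2_{m'}$, freezing it there; by Theorem~\ref{t:monotonicity} this only increases activity. The point is that, \emph{conditionally on everything revealed so far} --- a potentially very rare event --- the configuration of frozen particles inside $C^2_{m'}$ is dominated by a Poisson point process on $C^2_{m'}$ of intensity at most $\zeta_k$, the excess $\zeta_k-\zeta_{k+1}=\zeta_0/(4(k+1)^2)$ over the starting density being exactly the sprinkle produced by the Sieving Lemma~\ref{l:couple2} (via soft local times on the hopping process of Lemma~\ref{l:fall}) to blur the conditioning, except on an event of probability at most $\varepsilon_k:=\useconstant{c:main3}\exp\{-\useconstant{c:main4}L_k^{\gamma/3}\}$; it is here that the hypothesis $\zeta_0>8(k+1)^2L_k^{-\gamma/3}$, equivalently $\zeta_k-\zeta_{k+1}>2L_k^{-\gamma/3}$, is needed for the large-deviation estimate behind the coupling. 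On the coupling event the conditional probability that $m'$ is bad is at most $p_k(\zeta_k)$ by the definition \eqref{e:pk} and Theorem~\ref{t:monotonicity}. Hence the number of bad sub-boxes is stochastically dominated by a $\mathrm{Binomial}(N,\,p_k(\zeta_k)+\varepsilon_k)$ variable, so
\[
  p_{k+1}(\zeta_{k+1})\ \leq\ \binom N2\big(p_k(\zeta_k)+\varepsilon_k\big)^2\ \leq\ N^2\,p_k(\zeta_k)^2 + 2N^2\varepsilon_k .
\]
Since $N^2=(L_{k+1}/L_k)^{2d}$ grows only polynomially in $L_k$ whereas $\varepsilon_k$ decays exponentially in $L_k^{\gamma/3}$, the term $2N^2\varepsilon_k$ is absorbed (after renaming constants) into $\useconstant{c:main3}\exp\{-\useconstant{c:main4}L_k^{\gamma/3}\}$, which is \eqref{e:main_aux}.

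The main obstacle is the middle step: justifying that, after conditioning on the rare behaviour in all previously revealed sub-boxes, the particle flux into the current sub-box is still dominated by a fresh, slightly denser Poisson process, so that the badness events become effectively independent. This is precisely what the Sieving Lemma provides; the delicate points are the choice of stopping times at which the incoming particles are frozen in the kernel $C^2_{m'}$, arranging the revealing order so that the exposed history is measurable with respect to data the sieving step treats as given, and checking that the sprinkle $\zeta_0/(4(k+1)^2)$ --- hence the lower bound $\zeta_0>8(k+1)^2L_k^{-\gamma/3}$ --- suffices for the large-deviation bound to overcome the entropy factor $N^2$ counting pairs of sub-boxes.
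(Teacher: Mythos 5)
Your overall counting strategy is not the one the paper uses, and its key step has a genuine gap. The paper does not extract the square from ``two spatially distinct bad sub-boxes plus a binomial domination''; it extracts it from two \emph{successive} rounds of leakage: starting from a sieved law, it introduces the stopping time $D_1$ in \eqref{e:D_1} (first time \emph{any} particle reaches \emph{any} $\partial_i C^0_{m'}$, $m'\in M_k$), bounds $\mathbb{P}[D_1<\infty]$ by $(L_{k+1}/L_k)^d p_k(\zeta_k)$ via a union bound, and then, by the strong Markov property, multiplies by a \emph{supremum over balanced configurations} of the probability of a second escape, which is re-sieved by Lemma~\ref{l:couple2} and again costs $(L_{k+1}/L_k)^d p_k(\zeta_k)$ (see \eqref{e:term_c}--\eqref{e:term_c2}). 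At no point does the paper condition on a rare event: the second factor is a worst-case bound over $\zeta$-balanced inputs, and the product structure comes from the Markov property, not from any independence between sub-boxes.

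The gap in your plan is precisely the step you flag as the main obstacle and then assert the Sieving Lemma resolves: the claim that, \emph{conditionally on the revealed (rare) escape behaviour in previously exposed sub-boxes}, the flux into the next sub-box is dominated by a Poisson law on its kernel of intensity $\zeta_k$ up to an error $\exp\{-cL_k^{\gamma/3}\}$, so that the badness count is dominated by a Binomial$(N,p_k+\varepsilon_k)$. Lemma~\ref{l:couple2} gives nothing of the sort: its input is a deterministic, balanced starting configuration, and its sprinkling of size $\zeta_0/(16(k+1)^2)$ can absorb \emph{typical} density fluctuations (a large-deviation cost of order $\exp\{-cL_k^{d-\gamma/3}\}$), but it cannot absorb conditioning on an event of probability $\sim p_k$, which is super-polynomially small; conditionally on a neighbouring box having leaked, the particles that caused the leak enter the current box and can make it bad with conditional probability close to $1$, not $p_k+\varepsilon_k$ (badness indicators along the path of a single far-travelling particle are essentially perfectly correlated, which is incompatible with your revealment/domination scheme as stated). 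There is also a preliminary flaw: with the initial particles Poisson-distributed over all of $C^2_m$ (not confined to the kernels $C^2_{m'}$), your event ``some particle visits $\partial_i C^0_{m'}$'' holds almost surely for typical interior sub-boxes (particles start on or next to those boundaries), so a bound of the form $\binom{N}{2}(p_k+\varepsilon_k)^2$ for ``at least two bad boxes'' cannot hold unless badness is defined only after sieving --- and once it is, the conditional-decoupling problem above is exactly what must be solved, which is why the paper replaces independence across boxes by the sequential $D_1$/supremum argument.
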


The main steps in the proof of the above theorem are schematically summarized in Figure~\ref{f:bounds}. Roughly speaking, each of the boxes in that picture represent a starting configuration under which we are evaluating the probability that some particle escapes.

In Figure~\ref{f:bounds}, regions are either: gray, hatched or filled with small squares.
These patterns have different meanings that we now explain. Gray regions represent a configuration starting from a Poisson point process (like in the definition of $p_k(\zeta)$). We now explain the hatched area, which stands for \emph{balanced} configurations.

\begin{definition}
  \label{d:balanced}
  We say that a collection $(x_j)_{j \in J}^n$ is $\zeta$-balanced with respect to the paving $\{C_i\}_{i \in I}$ of side length $L$ if for every $i \in I$, the number of points falling in $C_i$ is smaller or equal to $\zeta L^d$. Recall the definition of paving in \eqref{e:paving}.
\end{definition}
The arrow between (a) and (b) in Figure~\ref{f:bounds} simply indicates that with high probability a Poisson point process is balanced.

We now need to explain what the areas filled with small gray squares represent in Figure~\ref{f:bounds}. These are the so-called \emph{sieved} configurations, where all particles are positioned in squares of type $C^2_{m'}$, with $m'$ in scale $k-1$. Having the particles in such situation, will allow us to relate the probability that some escape in terms of $p_{k-1}$, which is the main part of our induction argument. We now define precisely what these configurations are.

\begin{definition}
  \label{d:sieved}
  Given a density $\zeta$, an index $q \in \{1,2\}$ and $m \in M_k$ ($k \geq \useconstant{c:Rsmall}$), we define the $q$-sieved law in $C_m$ with density $\zeta$ as the law of a Poisson point process with intensity
\begin{equation}
  \mu^q_m(\zeta) := \zeta \;\cdot \; \1{C^q_m} \; \cdot \; \1{\cup_{m' \in M_{k-1}} C^2_{m'}} \; .
\end{equation}
The support of the above measure is illustrated in Figure~\ref{f:bounds} (c) and (e) for $q = 1$ and $2$ respectively (gray area).

\end{definition}

\begin{figure}[h]
\centering \includegraphics[width = \textwidth]{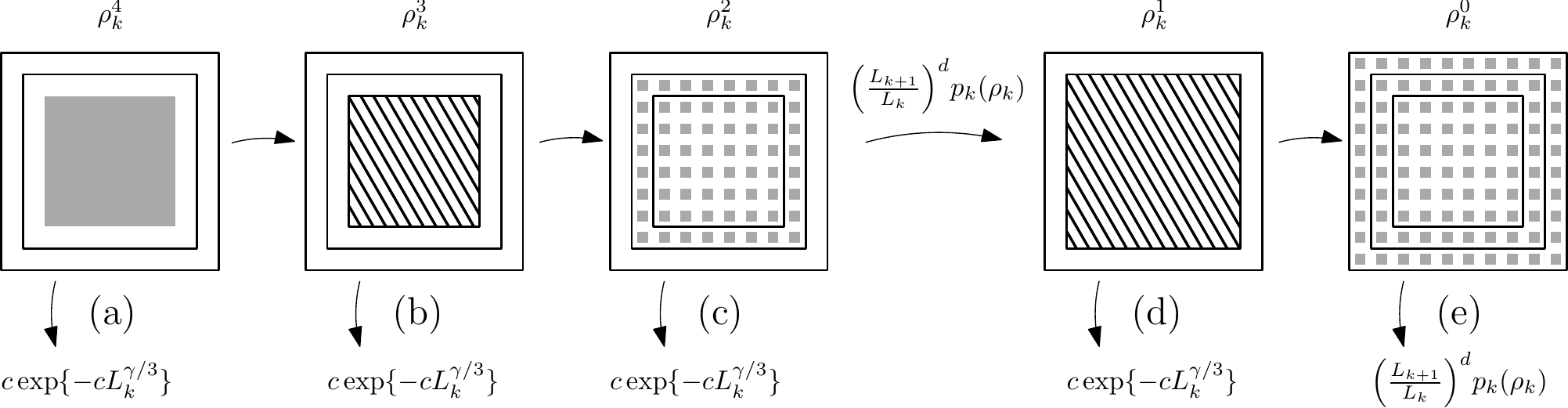}
  \caption{In this figure we represent informally the steps in the proof of Theorem~\ref{t:main_aux}. To emphasize this correspondence, we label the equations in the proof of Theorem~\ref{t:main_aux} according to what they correspond in this figure. The arrows (b $\rightarrow$ c) and (d $\rightarrow$ e) illustrate the use of the Sieving Lemma~\ref{l:couple2}.}
  \label{f:bounds}
\end{figure}

Another important aspect of our proof is that in every step depicted in Figure~\ref{f:bounds}, we need to perform a sprinkling of our density $\zeta_k$. This motivates us to introduce the intermediate densities between $\zeta_{k+1}$ and $\zeta_k$:
\begin{equation}
  \label{e:rhokr}
  \zeta_k^r = \zeta_k - \frac{r \zeta_0}{16(k+1)^2}, \text{ for $r = 0, \dots, 4$,}
\end{equation}
so that $\zeta_{k+1} = \zeta_{k}^4 < \zeta_{k}^3 < \dots < \zeta_{k}^0 = \zeta_{k}$.

\medskip

We are now ready to state our main auxiliary result in the proof of Theorem~\ref{t:main_aux}, the so-called \emph{Sieving Lemma}. Roughly speaking, in this lemma we start from a balanced configuration of particles, let them run until a certain stopping time, ending up with a configuration which is dominated by a sieved law as in Definition~\ref{d:sieved}. For this to work, we have to allow for some sprinkling in the intensity of particles ($r+1 \to r$) and for an enlargement in the size of the box ($C^{q+1}_m \to C^{q}_m$). This is all made precise below.

For the following, fix either $(r,q) = (0,1)$ or $(r,q) = (2,2)$.
\newconstant{c:couple2}
\begin{lemma}[\bf{Sieving Lemma}]
\label{l:couple2}
Fix $k > \useconstant{c:couple2}$, an index $m \in M_{k+1}$ and a collection $(x_j)_{j \in J} \subset C_m^{q+1}$ which is $(\zeta^{r+1}_k)$-balanced with respect to the paving $\{C_{m'}\}_{m' \in M_k}$. Then, assuming that
\begin{equation}
\label{e:rho_large}
1 \geq \zeta_0 \geq 8 (k+1)^2 L_k^{-\sfrac{\gamma}{3}},
\end{equation}
we can find a coupling $\mathbb{Q}$ between $\bigotimes_{j \in J} P_{x_j}$ and the sieved law with intensity $\mu^q_m(\zeta^r_k)$, in such a way that
\begin{equation}
    \mathbb{Q} \Big[ \sum_{j \in J} \delta_{X^j_{T_j}} \leq \sum_{j'
      \in J'} \delta_{Y_{j'}}, \text{ and no $X^j$ hits $\partial
      C_m^{q}$ before $T_j$}
    \Big] \geq 1 - c \exp \{ -c L_k^{\sfrac{\gamma}{3}} \},
\end{equation}
where $T_j$ is a stopping time for the $j$-th particle (defined in \eqref{e:T_j}).
\end{lemma}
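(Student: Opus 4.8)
The plan is to let all the walks diffuse freely up to their hopping times $T_j$ (the stopping time in the statement: the hitting time of $\bigcup_{m'\in M_k}C^2_{m'}$ along the hopping process of window $r'$, where $r'$ is a large multiple of $L_k^2$, chosen so that one hopping step spreads the walk over a scale much larger than $L_k$ but much smaller than $R_{k+1}$), then analyse precisely the law $\rho_j$ of the stopped position $X^j_{T_j}$, and finally invoke the soft local time technique of \cite{PT12} to realise the stopped configuration inside a genuine Poisson field. Write $\Sigma=\bigcup_{m'\in M_k}C^2_{m'}$ and $\nu=\sum_{j\in J}\rho_j$. Two estimates are needed: a \emph{bias} bound, $\nu(z)\le\zeta^{r+1}_k(1+o(1))$ uniformly in $z$ (so that, after a minuscule sprinkling, $\nu(z)\le\zeta^r_k$ with room to spare), and a \emph{concentration} bound showing that the soft local time built from the $\rho_j$ against the counting measure on $\Sigma$ stays below the level $\zeta^r_k$ everywhere with the stated probability.

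For the bias bound I would use the local central limit theorem on the torus exactly as in the proof of Lemma~\ref{l:fall}: by time $r'$ the walk's position is, up to total variation $o(1)$, distributed as a macroscopic Gaussian choosing the block $C^2_{m'}$ at a scale $\gg L_k$, times the uniform law inside that block; the rare event that $T_j$ is not the first hopping step contributes negligibly by \eqref{e:not_far}. Hence, for $z\in\Sigma$, $\rho_j(z)$ equals $|C^2|^{-1}$ times the Gaussian weight that the $j$‑th displacement assigns to the block containing $z$, up to the same errors, where $C^2$ is the common projection of the $C^2_{m'}$ to the torus. Summing over $j$ and interchanging the order of summation, $\nu(z)$ is bounded by $|C^2|^{-1}$ times $\sum_{\text{blocks }b}(\text{Gaussian weight }b\to\text{block of }z)\cdot\#\{j:x_j\in b\}$; since $(x_j)_{j\in J}\subset C^{q+1}_m$ is $\zeta^{r+1}_k$‑balanced with respect to $\{C_{m'}\}_{m'\in M_k}$ (Definition~\ref{d:balanced}), each count is at most $\zeta^{r+1}_k L_k^d$, while the weights sum to $1+o(1)$ over the source block by translation invariance. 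Combining this with $|C^2|=L_k^d(1-o(1))$ — the defect being of order $R_k/L_k$, see \eqref{e:Rk} — gives $\nu(z)\le\zeta^{r+1}_k(1+o(1))$ with the $o(1)$ a negative power of $L_k$; by the definition \eqref{e:rhokr} of $\zeta^r_k$ and the growth \eqref{e:Lk} of $L_k$ this is at most $\zeta^r_k-\tfrac12(\zeta^r_k-\zeta^{r+1}_k)$ for every admissible $k$. For $z\notin C^q_m$ one has instead the crude bound $\nu(z)\le c\,L_k^{2d}\exp\{-c L_k^{\gamma/2}\}$, since reaching such a $z$ before $T_j$ forces a displacement of at least $R_{k+1}$, so \eqref{e:not_far} applies.

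Now apply soft local times: there is a coupling of the independent sample $(X^j_{T_j})_{j\in J}$ with a Poisson point process $\eta$ of intensity (counting measure on $\Sigma$)$\,\otimes\,\mathrm{Leb}$ on $\Sigma\times\mathbb{R}_+$ and i.i.d. $\mathrm{Exp}(1)$ variables $(\xi_j)_{j\in J}$, such that $\sum_j\delta_{X^j_{T_j}}$ is dominated by the projection of $\eta\cap\{(z,u):u\le G_{|J|}(z)\}$, where $G_{|J|}(z)=\sum_{j\in J}\xi_j\rho_j(z)$ satisfies $\mathbb{E}[G_{|J|}(z)]=\nu(z)$. Since $G_{|J|}(z)$ is a sum of at most $c\zeta_0 L_{k+1}^d$ independent nonnegative terms, each of the form $\xi_j\cdot O((\sqrt{r'})^{-d})$, a Bernstein estimate for exponential‑tailed sums yields, for $t=\tfrac12(\zeta^r_k-\zeta^{r+1}_k)=\zeta_0/(32(k+1)^2)$, the bound $\mathbb{P}[G_{|J|}(z)>\nu(z)+t]\le\exp\{-c\,\zeta_0 L_k^d(k+1)^{-4}\}$; together with the bias bound this forces $G_{|J|}(z)\le\zeta^r_k$ with the same probability at each relevant $z\in C^q_m$, the case $z\notin C^q_m$ being trivial. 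A union bound over the at most $cL_{k+1}^d$ sites that matter, using $L_{k+1}\le L_k^{1+2\gamma}$ and the hypothesis $\zeta_0\ge 8(k+1)^2L_k^{-\gamma/3}$, turns this into $\mathbb{P}[\sup_z G_{|J|}(z)\le\zeta^r_k]\ge 1-c\exp\{-c L_k^{\gamma/3}\}$. On this event, intersected with the event — of probability at least $1-cL_{k+1}^d\exp\{-cL_k^{\gamma/2}\}$ by \eqref{e:not_far} applied with $D=R_{k+1}$ — that no $X^j$ leaves $C^q_m$ before $T_j$, every $X^j_{T_j}$ lies in $C^q_m\cap\Sigma$, so $\sum_j\delta_{X^j_{T_j}}$ is dominated by the projection of $\eta\cap(\Sigma\times[0,\zeta^r_k])$ restricted to $C^q_m$, which is exactly a Poisson point process of intensity $\mu^q_m(\zeta^r_k)$ (Definition~\ref{d:sieved}). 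Calling this process $\sum_{j'\in J'}\delta_{Y_{j'}}$ and attaching to each selected endpoint the trajectory of the corresponding walk in the obvious way defines the coupling $\mathbb{Q}$, and the two bad events above together have probability at most $c\exp\{-cL_k^{\gamma/3}\}$.

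I expect the main obstacle to be the bias bound. One needs the law of the stopped walk to be Poissonian to within the extremely small relative margin $\zeta^r_k/\zeta^{r+1}_k-1\asymp(k+1)^{-2}$, so the near‑uniformity of a single hopping step must be used \emph{quantitatively}, the two length scales $L_k\ll\sqrt{r'}\ll R_{k+1}$ must be juggled so that the walk mixes on the scale‑$k$ torus yet stays in $C^q_m$, and the boundary defects of both $C^2_{m'}$ and $C^q_m$ have to be shown negligible against that margin; the soft‑local‑time concentration is then comparatively routine, but it is there that the precise error $\exp\{-cL_k^{\gamma/3}\}$ and the lower bound on $\zeta_0$ enter.
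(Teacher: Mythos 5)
Your strategy is genuinely different from the paper's. The paper never analyses the law of the randomly stopped position $X^j_{T_j}$ directly: it couples the cloud at the \emph{fixed} time $t_1$ with a full-space Poisson process via Lemma~\ref{l:couple}, keeps only the points falling in $B$, and handles the leftover particles (those landing in the annuli $\bigcup_{m'} C^0_{m'}\setminus C^2_{m'}$, a cloud of density of order $L_k^{-\gamma}$, see \eqref{e:annuli_density}) through the separate Lemma~\ref{l:couple_rest}, which iterates the fixed-time coupling $S=\lfloor L_k^{\gamma/3}/2\rfloor$ times, paying a lump density $2L_k^{-2\gamma/3}$ per round; the hypothesis \eqref{e:rho_large} is used there, via \eqref{e:sprinkling_large}, precisely so that the accumulated extra density $\approx L_k^{-\gamma/3}$ fits inside the sprinkling gap $(\zeta^{r}_k-\zeta^{r+1}_k)/2$. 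You instead apply soft local times once, directly to the stopped laws $\rho_j$ of $X^j_{T_j}$, which forces you to prove the bias bound $\nu(z)=\sum_j\rho_j(z)\le\zeta^{r+1}_k(1+O(L_k^{-a}))$, but in exchange you get a single concentration estimate, no Lemma~\ref{l:couple_rest}, and in fact a better error probability. The price is a sharper analytic input: Lemma~\ref{l:integration} applies to heat kernels at a fixed time and does not cover $\rho_j$ verbatim.

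The one step that does not stand as written is your dismissal of the walks that are not sieved at the first hop. Per particle this event has probability of order $L_k^{-\gamma}$ (the volume fraction of the annuli), not a stretched-exponentially small one, and \eqref{e:not_far} says nothing about it: it controls displacement, not the failure to land in $B$. Since $|J|$ can be of order $\zeta_0 L_{k+1}^d$, a macroscopic number of walks take two or more hops, and their contribution to $\nu(z)$ must be estimated rather than ignored. The repair is to expand $\rho_j(z)$ over the number $s$ of hops, apply the Lemma~\ref{l:integration}-type bound only to the first kernel $p_{r'}(x_j,\cdot)$, and, after interchanging the sums, bound each additional hop by the factor $q=\sup_y P_y[X_{r'}\notin B]$, giving $\nu(z)\le\zeta^{r+1}_k\bigl(1+cL_k^{-0.01}\bigr)\sum_{s\ge 1}q^{s-1}$. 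For this to fit inside the margin $\zeta_0/(32(k+1)^2)$ you need $q\le cL_k^{-\gamma}$, i.e.\ a quantitative equidistribution statement for a single hop modulo $L_k$ (total-variation mixing of the torus after time $L_k^{2.02}\gg L_k^2$); note that Lemma~\ref{l:fall} only yields a constant lower bound on the per-hop hitting probability, which would give $q\le 1-c$ and ruin the bias bound, so this estimate has to be proved separately (standard, but it is an extra ingredient the paper never needs). Two smaller points: the actual $T_j$ of \eqref{e:T_j} is capped at $L_k^{2.04}$, so $X^j_{T_j}$ may lie outside $B$ on an event that must be added to the failure probability; and ``no $X^j$ hits $\partial C^q_m$ before $T_j$'' is a trajectory event, so the coupling must be completed by sampling the walks conditionally on their endpoints, as you indicate. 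With these repairs, your bookkeeping of the sprinkling margin, the exponential-moment concentration for the soft local time, and the union bound do deliver the stated conclusion.
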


The above lemma is illustrated in Figure~\ref{f:bounds}, where the cases $(r,q) = (0,1)$ or $(2,2)$ correspond respectively to the arrows (d $\rightarrow$ e) and (b $\rightarrow$ c).

\begin{proof}[Proof of Theorem~\ref{t:main_aux}]
In order to use Lemma~\ref{l:couple2}, recall that we assumed \eqref{e:rho_large} in Theorem~\ref{t:main_aux}. We start estimating $p_{k+1}(\zeta_{k+1})$ (recall \eqref{e:pk}) using Theorem~\ref{t:monotonicity}. For this, fix any given $m \in M_{k+1}$ and write
\begin{align}
  \nonumber p_{k+1}(\zeta_{k+1}) & = \mathbb{P}_{\zeta_k^4 \1{C_{m}^2}} \big[\text{some particle reaches $\partial_i C_{m}^0$} \big]\\
  \tag{a $\to$ b}
  \label{e:term_a}
  & \leq \mathbb{P}_{\zeta_k^4 \1{C_{m}^2}} \big[\text{$(X_0^j)_{j \leq J}$ is not $\zeta_k^3$-balanced with respect to $(C_{m'}^0)_{m' \in M_k}$} \big]\\
  \nonumber & \quad + \sup_{(x_j)_{j \in J} \subset C^2_{m} \atop \text{$\zeta_k^3$-balanced}} \mathbb{P}_{(x_j)_{j \in J}} \big[\text{some particle reaches $\partial_i C_{m}^0$} \big].
\end{align}
The above terms correspond to the arrows leaving the box (a) in Figure~\ref{f:bounds}.

We start by estimating the first term above. By a crude large deviation's estimate,
\begin{equation}
  \tag{a}
  \label{e:a_b}
  \begin{array}{e}
    \mathbb{P}_{\zeta_k^4 \1{C_{(k,0)}^2}} \big[& (X_0^j)_{j \leq J} & \text{ is not $\zeta_k^3$-balanced with respect to $(C_{m'}^0)_{m' \in M_k}$} \big]\\
    & \leq & \Big(\frac{L_{k+1}}{L_k}\Big)^d \mathbb{P}_{\zeta^4_k} \big[ \text{there are more than $\zeta^3_kL^d_k$ particles in $C^0_{(k,0)}$} \big]\\
    & \leq & \Big(\frac{L_{k+1}}{L_k}\Big)^d \exp \{ \zeta^4_k L_k^d (e^\theta -1) - \theta \zeta^3_k L_k^d \}\\
    & \overset{\theta \leq 1}\leq & \Big(\frac{L_{k+1}}{L_k}\Big)^d \exp \{ - \theta(\zeta^3_k - \zeta^4_k) L_k^d + \theta^2 \zeta_0 L_k^d \}\\
    & \overset{\eqref{e:rhokr}}\leq & \Big(\frac{L_{k+1}}{L_k}\Big)^d \exp \Big\{ - \theta \zeta_0 L_k^d \Big( \frac{1}{16(k+1)^2} - \theta \Big) \Big\}\\
    & \overset{\theta = 1/32(k+1)^{2}}\leq & \Big(\frac{L_{k+1}}{L_k}\Big)^d \exp \Big\{ - \frac{\zeta_0}{[32(k+1)^2]^2} L_k^d \Big\}\\
    & \overset{\eqref{e:rho_large}}\leq & \Big(\frac{L_{k+1}}{L_k}\Big)^d \exp \{ -\tfrac 1{128(k+1)^2} L_k^{d-\gamma/3} \} \overset{k>c}\leq c \exp \{- c L_k^{\gamma/3}\}.
  \end{array}
\end{equation}
The above bound corresponds to the arrow going down from (a) in Figure~\ref{f:bounds}.

We now turn to the second term in \eqref{e:term_a}, using the Diaconis-Fulton representation, together with \eqref{e:rho_large} and Lemma~\ref{l:couple2} we have
\begin{equation}
  \tag{b $\to$ c}
  \label{e:term_b}
  \begin{split}
    \sup_{(x_j)_{j \in J} \subseteq C^2_m \atop \text{$\zeta_k^3$-balanced}} \mathbb{P}_{(x_j)_{j \in J}} \big[ & \text{some particle reaches $\partial_i C_{m}^0$} \big]\\
    & \leq c \exp \{ - c L_k^{\gamma/3} \} + \mathbb{P}_{\mu^1_m(\zeta^2_k)} \big[ \text{some particle reaches $\partial_i C_{m}^0$} \big].
  \end{split}
\end{equation}
The above terms correspond to the two arrows leaving (b) in the figure.

We now turn to the bound on the second term above. Every particle in the starting configuration of $\mathbb{P}_{\mu^1_m(\zeta^2_k)}$ belongs to $C^2_{m'} \subset C^1_m$ for some $m'\in M_{k-1}$, see Definition~\ref{d:sieved}. Consider the following stopping time,
\begin{equation}
  \label{e:D_1}
  D_1 = \inf \big\{ t \geq 0; \text{some particle reaches $\cup_{m' \in M_k} \partial_i C^0_{m'}$} \big\}.
\end{equation}
Clearly if $D_1 = \infty$, all particles have slept before reaching the boundary of the larger box $\partial_i C^0_m$. Then, by the Markov property
\begin{equation}
  \label{e:term_c}
  \begin{split}
    \mathbb{P}_{\mu^1_{(k,0)}(\zeta^2_k)} \big[ & \text{some particle reaches $\partial_i C_{(k,0)}^0$} \big]\\
    & \leq \mathbb{E}_{\mu^1_{(k,0)}(\zeta^2_k)} \Big[  D_1 < \infty,  \mathbb{P}_{(X^j_{D_1})} \big[ \text{some particle reaches $\partial_i C_{(k,0)}^0$} \big] \Big]\\
    & \leq \mathbb{P}_{\mu^1_{(k,0)}(\zeta^2_k)} [D_1 < \infty] \sup_{(x^j)_{j \in J} \subseteq C^1_m \atop \text{$\zeta^1_k$-balanced}} \mathbb{P}_{(x^j)_{j \in J}} \big[ \text{some particle reaches $\partial_i C_{(k,0)}^0$} \big]\\
    & \quad + \mathbb{P}_{\mu^1_{(k,0)}(\zeta^2_k)} [(X^j_0)_{j \in J} \text{ is not $\zeta^3_k$-balanced}],
  \end{split}
\end{equation}
where in the last inequality, we used the fact that no particle can exit its corresponding box $C^0_{m'}$ ($m' \in M_k$) before time $D_1$.

Observe now that the same calculation as in \eqref{e:a_b} can be used to bound the last probability in \eqref{e:term_c}, so that the above can be bounded by
\begin{equation}
  \label{e:term_c2}
  \tag{c $\to$ d}
  \begin{split}
    \Big( \frac{L_{k+1}}{L_k} & \Big)^d p_k(\zeta_k) \sup_{(x^j)_{j \in J} \subseteq C^1_m \atop \text{$\zeta^1_k$-balanced}} \mathbb{P}_{(x^j)_{j \in J}} \big[ \text{some particle reaches $\partial_i C_{(k,0)}^0$} \big] + c \exp \{ - c L_k^{\gamma/3} \}\\
    \overset{\text{Lemma~\ref{l:couple2}}}\leq & \Big( \frac{L_{k+1}}{L_k} \Big)^d p_k(\zeta_k) \Big( \mathbb{P}_{\mu^0_{(k,0)}(p^0_k)} \big[ \text{some particle reaches $\partial_i C_{(k,0)}^0$} \big] + c \exp \{- c L_k^{\gamma/3} \} \Big)\\
    & \quad + c \exp \{ - c L_k^{\gamma/3} \}
  \end{split}
\end{equation}
and similarly to \eqref{e:term_c} we can bound $\mathbb{P}_{\mu^0_{(k,0)}(p^0_k)} \big[ \text{some particle reaches $\partial_i C_{(k,0)}^0$} \big]$ by simply $\mathbb{P}_{\mu^0_{(k,0)}(\zeta^0_k)} [D_1 < \infty] \leq \big(\frac{L_{k+1}}{L_k}\big)^d p_k$. This, together with \eqref{e:term_a}, \eqref{e:term_b} and \eqref{e:term_c2} gives
\begin{equation}
  \begin{split}
    p_{k+1}(\zeta_{k+1}) \leq & c \exp\{-c L_k^{d-\gamma/3} \} + c \exp \{ - c L_k^{\gamma/3}\}\\
    & + \Big( \frac{L_{k+1}}{L_k} \Big)^d p_k(\zeta_{k}) \Big( \Big( \frac{L_{k+1}}{L_k} \Big)^d p_k(\zeta_k) + c \exp \{ - c L_k^{\gamma/3}\} \Big),
  \end{split}
\end{equation}
finishing the proof of Theorem~\ref{t:main_aux}.
\end{proof}

We now can use the above recursion to prove the decay of $p_k$ in the following

\newconstant{c:density}
\newconstant{c:t_main}
\begin{theorem}
\label{t:main}
There exists $\useconstant{c:density}, \useconstant{c:t_main} > 0$ such that, for $\zeta < \useconstant{c:density}$,
\begin{equation}
  p_k(\zeta) \leq \useconstant{c:t_main} \exp \{ - \log^2 L_k \}.
\end{equation}
\end{theorem}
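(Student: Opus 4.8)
The plan is to run the recursion \eqref{e:main_aux} from the first scale at which it is valid, after priming it with a triggering estimate in the style of Theorem~\ref{t:triggering}, and to handle the low scales by hand. Fix $\gamma = 1/10$ and set $\epsilon_0 = \gamma/6$; the crucial point is $\epsilon_0 < \gamma/3$. Given $\zeta$ small I take $\zeta_0 = 2\zeta$ and the sprinkled densities $(\zeta_k)$ of \eqref{e:rhok}; since $\tfrac14\sum_{j\ge1}j^{-2} = \pi^2/24 < \tfrac12$ we have $\zeta_k \ge \zeta_0/2 = \zeta$ for all $k$, so coupling the Poisson inputs and using Theorem~\ref{t:monotonicity} gives $p_k(\zeta)\le p_k(\zeta_k)$, and more generally that $p_k(\cdot)$ is non-decreasing and $\mathbb{P}_{\zeta'\1{C^2_{(k,0)}}}[E]\le\mathbb{P}_{\zeta''\1{B'}}[E]$ for events $E$ increasing in the accumulated activity $(J_x)$ whenever $\zeta'\le\zeta''$ and $C^2_{(k,0)}\subseteq B'$. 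Because $L_{k+1}=\lfloor L_k^\gamma\rfloor^2L_k$ grows super-exponentially (see \eqref{e:Lk_growth}), the numbers $8(k+1)^2L_k^{-\gamma/3}$ tend to $0$, so I define $k_0=k_0(\zeta)$ as the least integer with $8(k+1)^2L_k^{-\gamma/3}<\zeta_0$ for every $k\ge k_0$; then $k_0(\zeta)\to\infty$ as $\zeta\to0$, and by minimality $L_{k_0-1}$ — hence every $L_k$ with $k\le k_0$ — is at most a fixed power of $k_0^2/\zeta_0$. Since $k_0(\zeta)$ grows only like $\log\log(1/\zeta)$, this forces $\zeta_0\le L_k^{-\epsilon_0}$ for all $k\le k_0$, provided $\zeta$ is small enough.

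For the scales $k\le k_0$ I argue directly. The density is $\le L_k^{-\epsilon_0}$, so the argument of Corollary~\ref{c:trigger} run with the exponent $\epsilon_0$ instead of $1/2$ — for which $R_k\ge cL_k^{11/12}$ still forces $C^2_{(k,0)}\subseteq B'_{L_k}$ once $k\ge\useconstant{c:Rsmall}$ — together with monotonicity gives $p_k(\zeta)\le p_k(L_k^{-\epsilon_0})\le c\exp\{-cL_k^{\epsilon_0/(4d)}\}$; the finitely many remaining $k$ are dealt with by $p_k\le1$. As the map $L\mapsto\exp\{\log^2 L-cL^{\epsilon_0/(4d)}\}$ is bounded on $[L_0,\infty)$, this yields $p_k(\zeta)\le\useconstant{c:t_main}\exp\{-\log^2 L_k\}$ for all $k\le k_0$ with a universal constant $\useconstant{c:t_main}$; applied at $k=k_0$ with the density $\zeta_0\ge\zeta_{k_0}$, it also gives the base case $p_{k_0}(\zeta_{k_0})\le\exp\{-\log^2 L_{k_0}\}$, valid once $L_{k_0}$ is large, i.e.\ for $\zeta$ small.

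For $k\ge k_0$ the hypotheses of Theorem~\ref{t:main_aux} are met ($\zeta_0\in(8(k+1)^2L_k^{-\gamma/3},1)$, and $k\ge\useconstant{c:main1}$ since $k_0\to\infty$), so I prove by induction that $p_k(\zeta_k)\le\exp\{-\log^2 L_k\}$ for all $k\ge k_0$. Using $(L_{k+1}/L_k)^{2d}\le L_k^{4d\gamma}$ and $\log L_{k+1}\le(1+2\gamma)\log L_k = 1.2\log L_k$, hence $\log^2 L_{k+1}\le1.44\log^2 L_k$, the inductive step reads
\[
  p_{k+1}(\zeta_{k+1})\ \le\ L_k^{4d\gamma}\exp\{-2\log^2 L_k\}+c\exp\{-cL_k^{\gamma/3}\}\ \le\ \exp\{-1.44\log^2 L_k\}\ \le\ \exp\{-\log^2 L_{k+1}\},
\]
the middle inequality holding because $\log L_k\ll\log^2 L_k\ll L_k^{\gamma/3}$, so each summand is $\le\tfrac12\exp\{-1.44\log^2 L_k\}$ once $L_k$ exceeds a universal threshold — which it does for $k\ge k_0$. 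Hence $p_k(\zeta)\le p_k(\zeta_k)\le\exp\{-\log^2 L_k\}$ for $k>k_0$; together with the previous paragraph, and choosing $\useconstant{c:density}$ small enough to subsume every ``$\zeta$ small'' requirement above, this proves the theorem.

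The main obstacle, in my view, is reconciling the triggering with the recursion. The recursion only becomes available at the scale $k_0(\zeta)$ where $\zeta_0\gtrsim L_k^{-\gamma/3}$, and this scale diverges as $\zeta\to0$; at that scale one cannot afford the exponent $1/2$ of Corollary~\ref{c:trigger}, which is exactly why one must trigger with a smaller exponent $\epsilon_0<\gamma/3$. Moreover the number of low scales $k\le k_0(\zeta)$ is unbounded in $\zeta$, so their treatment must itself deliver the bound $\useconstant{c:t_main}\exp\{-\log^2 L_k\}$ with a density-independent constant, rather than a merely trivial bound absorbed into $\useconstant{c:t_main}$. Once these two points are arranged, the rest — the monotonicity comparisons, the large-deviations slack engineered into $\zeta_k-\zeta_{k-1}$, and the doubly-exponential contraction of the quadratic recursion — is routine.
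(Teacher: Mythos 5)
Your proposal is correct, and it assembles the same two ingredients as the paper --- the triggering estimate of Theorem~\ref{t:triggering} and the recursion \eqref{e:main_aux} of Theorem~\ref{t:main_aux}, propagated through the ansatz $\exp\{-\log^2 L_k\}$ exactly as in \eqref{e:kzero} --- but you interface them in a genuinely different, and in fact more careful, way. The paper triggers at one fixed scale $\bar k$ via Corollary~\ref{c:trigger} (density exponent $1/2$), sets $\zeta_0=\useconstant{c:density}=L_{\bar k}^{-1/2}$, and starts the induction at $\bar k$; to do so it invokes \eqref{e:k_lower}, i.e.\ $L_k^{-1/2}\ge 8(k+1)^2L_k^{-\gamma/3}$, which with $\gamma=1/10$ cannot hold for any $k$ (the exponent $1/2$ exceeds $\gamma/3$), so at the base scale the hypothesis \eqref{e:rho_large} of Theorem~\ref{t:main_aux} is not actually available, and monotonicity cannot bridge the gap because the triggering density $L_{\bar k}^{-1/2}$ sits \emph{below} the sprinkled densities $\zeta_k$. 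You fix $\zeta$ first, define the starting scale $k_0(\zeta)$ as the first one from which \eqref{e:rho_large} holds at all larger scales, trigger with the smaller exponent $\epsilon_0=\gamma/6$ (Theorem~\ref{t:triggering} is stated for an arbitrary fixed $\epsilon$, so this costs nothing), and treat the unboundedly many scales $k\le k_0(\zeta)$ directly, using the minimality of $k_0$ to get $\zeta_0\le L_k^{-\epsilon_0}$ there; this is precisely where $\epsilon_0$ must be safely below $\gamma/3$ (with slack for the factor $1+2\gamma$ in passing from $L_{k_0-1}$ to $L_{k_0}$ and for the $(k+1)^2$ term --- your choice $\gamma/6$ has ample room, whereas your slogan ``$\epsilon_0<\gamma/3$'' alone would be borderline). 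What your arrangement buys is that the base case and the density hypothesis of the recursion are simultaneously verifiable; what it costs is that the scale at which the recursion takes over diverges as $\zeta\to0$, so the low scales must themselves deliver the bound $\useconstant{c:t_main}\exp\{-\log^2L_k\}$ with a $\zeta$-independent constant, which your $c\exp\{-cL_k^{\epsilon_0/(4d)}\}$ estimate does. The small imprecisions that remain (monotonicity of $p_k$ in the density via Poisson thinning plus Theorem~\ref{t:monotonicity}, and the increasingness of the exit event in the occupation field $(J_x)$) are shared with the paper's own argument and are harmless.
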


\begin{proof}
\newconstant{c:kzero}
We will use Theorem~\ref{t:main_aux} to prove the desired result by induction. For that let us first pick $\useconstant{c:kzero}$ so that for any $k \geq \useconstant{c:kzero}$
\begin{equation}
  \label{e:kzero}
  L_k^{4d \gamma} \exp\{ - (2 - 1 - 2\gamma) \log^2 L_k \} + \useconstant{c:main3} \exp \{ - \useconstant{c:main4}  L_k^{\gamma/3} \} \leq 1,
\end{equation}
which can clearly be done. Recall $\useconstant{c:main3}$ and $\useconstant{c:main4}$ from Theorem~\ref{t:main_aux}.

Now, recalling Corollary~\ref{c:trigger} we can choose $\bar k \geq \useconstant{c:kzero}$ such that for any $k \geq \bar k$ one has
\begin{gather}
  \label{e:first_pk}
  p_{k}(L_{k}^{-1/2}) \leq \exp \big\{- \log^2 L_{k} \big\} \text{ and}\\
  \label{e:k_lower}
  L_{k}^{-1/2} \geq 8(k+1)^2 L_{k}^{-\gamma/3}.
\end{gather}
We now fix $\useconstant{c:density} = L_{\bar k}^{-1/2}$ and prove that for $\zeta_0 = \useconstant{c:density}$ we have
\begin{equation}
  \label{e:log2ind}
  p_{k}(\zeta_k) \leq \exp \big\{- \log^2 L_{k} \big\}, \text{ for $k \geq \bar k$},
\end{equation}
by induction in $k$.

We use \eqref{e:first_pk} and the definition of $\zeta_0$ to establish \eqref{e:log2ind} for $k = \bar k$. Now, supposing that we have established \eqref{e:log2ind} for some $k \geq \bar k$, we recall \eqref{e:k_lower} and Theorem~\ref{t:main_aux} to obtain
\begin{equation*}
\begin{split}
  \frac{p_{k+1}(\zeta_{k+1})}{\exp \{ - \log^2 L_k \}} & \leq L_k^{4d \gamma} \exp\{ - 2 \log^2 L_k + \log^2 L_{k+1} \} + \useconstant{c:main3} \exp \{ - \useconstant{c:main4} L_k^{\gamma/3} + \log^2 L_{k+1}\}\\
  & \leq L_k^{4d \gamma} \exp\{ - (2 - 1 - 2\gamma) \log^2 L_k \} + \useconstant{c:main3} \exp \{ - \useconstant{c:main4} L_k^{\gamma/3} \} \overset{\eqref{e:kzero}}\leq 1,
\end{split}
\end{equation*}
so that $p_{k+1} \leq \exp \{ - \log^2 L_{k+1}\}$. Finishing the proof of \eqref{e:log2ind} by induction, which yields Theorem~\ref{t:main} by properly choosing $\useconstant{c:t_main}$.
\end{proof}

In the next section, we will employ the above results to show the existence of an absorbed phase for the infinite system.

\begin{remark}
\label{r:pho_large}
The condition \eqref{e:rho_large} is essential in proving the Sieving Lemma~\ref{l:couple2} (see Section~\ref{s:sieving}). This is justifiable, since a small value of $\zeta_0$ will make the sprinkling parameter also smaller and consequently breaking the good decay that appears in Lemma~\ref{l:couple}. This restriction is the main reason for the existence of Section~\ref{s:triggering}, where we deal with particle densities that vanish, but which are still large enough to satisfy \eqref{e:rho_large}.
\end{remark}

\section{The absorption phase}
\label{s:infinite}

We start this section with an application of Theorem~\ref{t:main}, that estimates the maximum displacement of particles when we start from sets other than a box. This will be later used in the proof of Theorem~\ref{t:absorption}.

\begin{theorem}
\label{t:set_displace}
For any given $A \subset \mathbb{Z}^d$ and $\zeta \leq \useconstant{c:density}/(4 \cdot 3^d)$, we have
\begin{equation}
  \mathbb{P}_{\zeta \1{A}} \big[ \text{some particle exits $B(A,s)$} \big] \leq c |A| \exp \{ - c \log^2 s \}.
\end{equation}
\end{theorem}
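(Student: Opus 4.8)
The plan is to reduce the claim about an arbitrary set $A$ to the box estimate in Theorem~\ref{t:main} by covering $A$ with boxes of a well-chosen scale and applying monotonicity. First I would pick the scale $k$ so that $L_k$ is comparable to $s$ — more precisely, take $k$ to be the largest index with $L_k \leq s$ (recalling from \eqref{e:Lk_growth} that $L_{k+1} \leq L_k^{1.2+o(1)}$, so $L_k \geq s^{c}$ for some fixed $c>0$, hence $\log^2 L_k \geq c\log^2 s$). Tile $\mathbb{Z}^d$ by the boxes $\{C^0_m\}_{m \in M_k}$ of side length $L_k$. Let $\mathcal{I} \subseteq M_k$ be the set of indices $m$ such that $C^0_m$ intersects $A$ or lies within $\ell_\infty$-distance $L_k$ of $A$; then $|\mathcal{I}| \leq c|A|$ (each box has volume $L_k^d \geq 1$ and the enlargement by $L_k$ only multiplies the count by a dimensional constant).

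The key step is a domination argument. For a particle to exit $B(A,s)$, it must first cross a box $C^0_m$ with $m$ at scale $k$ that is not in $\mathcal{I}$; in fact it must be that some box $C^0_m$ sees a particle reach its inner boundary $\partial_i C^0_m$ despite having started with no particles in the buffer annulus $C^0_m \setminus C^2_m$... — so instead I would run the following cleaner reduction. Group the tiling boxes into $3^d$ sublattices according to the parity classes of $i \bmod 3$ in each coordinate, so that within one sublattice the dilated boxes $C^0_m$ are pairwise disjoint while the $C^2_m$ still cover $\mathbb{Z}^d$. The initial Poisson($\zeta\1{A}$) configuration is dominated by Poisson($\zeta\1{\mathbb{Z}^d}$), and by Theorem~\ref{t:monotonicity} together with the Diaconis--Fulton representation we may, for each sublattice separately, bound the probability that activity ever leaves $\bigcup_{m \in \mathcal{I}} C^0_m$ by $\sum_m \mathbb{P}_{\zeta\1{C^2_m}}[\text{some particle reaches }\partial_i C^0_m]$ over the relevant $m$ in that sublattice — this is exactly $p_k(\zeta)$ per box. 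The factor $4 \cdot 3^d$ in the hypothesis $\zeta \leq \useconstant{c:density}/(4\cdot 3^d)$ is there precisely so that after splitting into $3^d$ sublattices and accounting for the overlap of the $C^2_m$'s, the effective density in each application stays below $\useconstant{c:density}$, letting us invoke Theorem~\ref{t:main}.

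Putting it together: by a union bound over the $3^d$ sublattices and over the $O(|A|)$ relevant boxes,
\begin{equation}
  \mathbb{P}_{\zeta\1{A}}\big[\text{some particle exits }B(A,s)\big] \leq 3^d \cdot c|A| \cdot p_k(\zeta) \leq c|A| \useconstant{c:t_main}\exp\{-\log^2 L_k\} \leq c|A|\exp\{-c\log^2 s\},
\end{equation}
where the last inequality uses $\log^2 L_k \geq c \log^2 s$. The main obstacle I anticipate is making the domination/splitting step fully rigorous: one must argue carefully that activity cannot escape the union of the chosen boxes without some individual box leaking a particle to its own inner boundary, and that conditioning on the behavior in other boxes does not spoil the per-box estimate — this is where the commutativity and monotonicity theorems (Theorems~\ref{t:commutative} and~\ref{t:monotonicity}) and the abelian nature of the Diaconis--Fulton construction do the real work, allowing one to stabilize box by box. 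A minor point to check is the boundary case where $s$ is smaller than some $L_{\useconstant{c:Rsmall}}$, which is absorbed into the constant $c$.
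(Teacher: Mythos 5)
Your overall architecture (choose $k$ with $L_k$ comparable to $s$, union bound over $O(|A|)$ boxes meeting $A$, invoke Theorem~\ref{t:main}, absorb the scale change into $\log^2 L_k \geq c\log^2 s$) matches the paper, but the central step is missing and, as written, would fail. The quantity $p_k(\zeta)$ in \eqref{e:pk} is defined for an initial Poisson configuration supported on the \emph{kernel} $C^2_m$, with the buffer annulus $C^0_m \setminus C^2_m$ empty. An initial configuration Poisson$(\zeta \1{A})$ places particles anywhere in $A$, in particular inside the buffer annuli of whatever tiling you choose, and Theorem~\ref{t:monotonicity} only lets you \emph{remove} particles or add sleep instructions -- it cannot turn a particle sitting in $C^0_m \setminus C^2_m$ into one covered by the $p_k$ estimate. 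So ``bound the probability \dots by $\sum_m \mathbb{P}_{\zeta\1{C^2_m}}[\text{some particle reaches }\partial_i C^0_m]$'' is not a consequence of domination; this is exactly the problem the paper's proof is organized around. The paper first demands that the Poisson cloud be $2\zeta$-balanced with respect to a scale-$(k-1)$ paving (a large-deviation estimate as in \eqref{e:a_b}), and then applies the Sieving Lemma~\ref{l:couple2}: the particles are run \emph{without sleeping} (Corollary~\ref{c:offsleep}, Diaconis--Fulton) up to stopping times $T_j$, after which their positions are stochastically dominated by a sieved Poisson law supported on the kernels $C^2_{m'}$, $m' \in M_{k-1}$, at the price of a sprinkling in the density and of the event that no particle escapes its enlarged box during the sieving. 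Commutativity/monotonicity alone do not substitute for this coupling (soft local times plus sprinkling); without it there is no route from Poisson$(\zeta\1{A})$ to the empty-buffer configurations that $p_k$ and Theorem~\ref{t:main} require.

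Your geometric device is also not correct as stated: grouping a disjoint tiling $\{C^0_m\}_{m\in M_k}$ into $3^d$ parity sublattices cannot make ``the $C^2_m$ still cover $\mathbb{Z}^d$'' -- the kernels of a disjoint tiling leave all the buffer annuli uncovered, and thinning to a sublattice only makes the gaps larger. The paper instead paves $\mathbb{Z}^d$ by boxes of side $L_k - 2R_k$ (the $C^1$-size), attaches to each the concentric $C^0_i$ of side $L_k$, and accepts that these enlarged boxes overlap, each meeting at most $3^d$ others (\eqref{e:3d}); the factor $4\cdot 3^d$ in the hypothesis comes from this overlap multiplicity ($3^d$) together with the factor $4$ lost in the balancing/splitting and the sieving sprinkling -- not from a need to keep the density below $\useconstant{c:density}$ across independent sublattice applications. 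Your intuition about where the constant comes from is in the right neighborhood, but the reduction it is supposed to license is the part of the argument that actually has to be proved, and it requires Lemma~\ref{l:couple2} rather than abelianness and monotonicity alone.
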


The main idea behind this proof is to sieve the particles into boxes $C^2_{m'}$ that are contained in $B(A,s)$ and then use Theorem~\ref{t:main}. The main obstacle to employ this strategy will be to adapt the Sieving Lemma~\ref{l:couple2} to deal with the situation where the particles do not start inside a kernel (either $C^1_m$ or $C^2_m$).

\begin{figure}[h]
\centering \includegraphics[width = .6\textwidth]{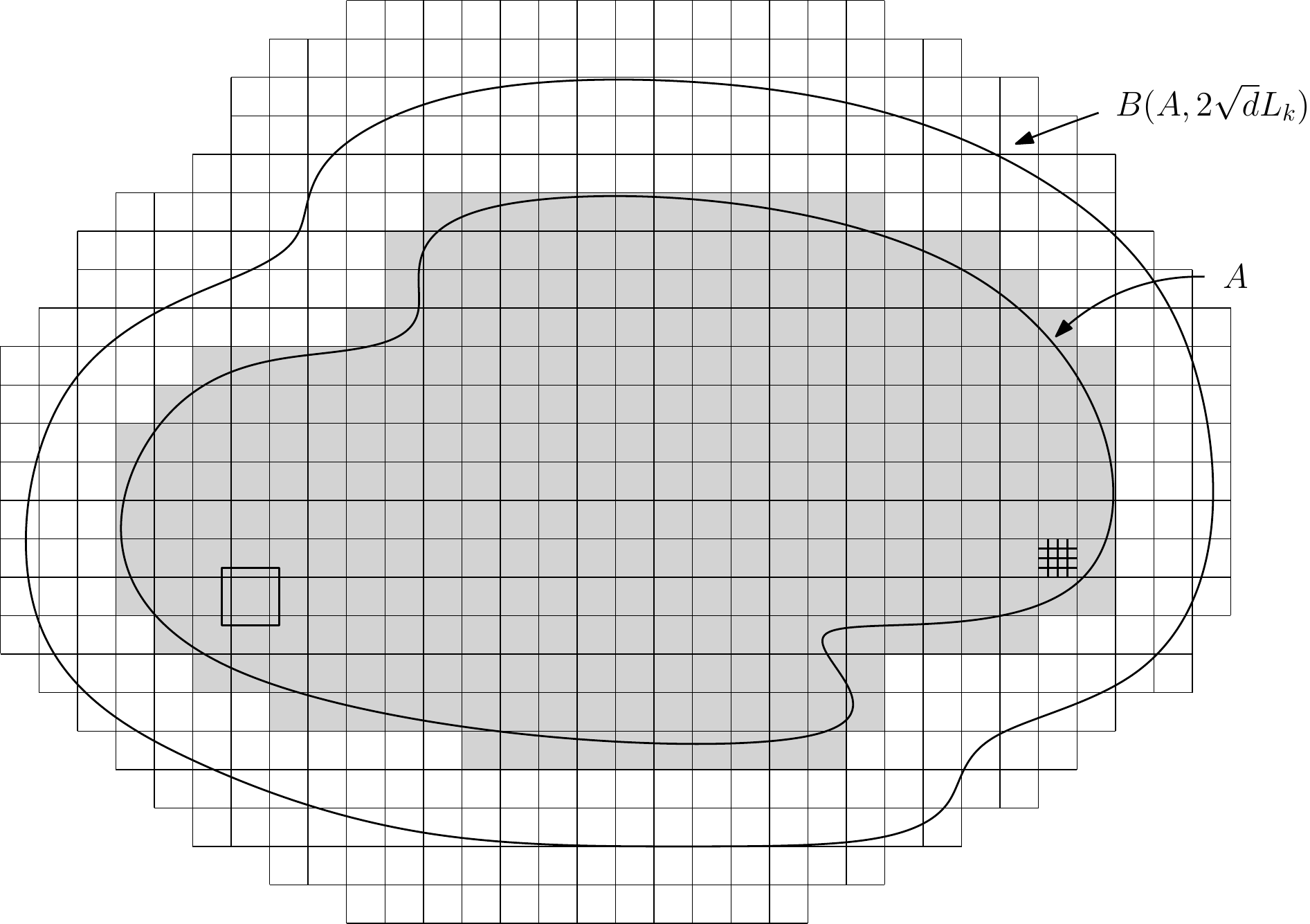}
  \caption{The sets $A$ and $B(A, 2 \sqrt{d} L_k$). We have also drawn the paving $\{C^1_i\}_{i \in I}$ and shown one box $C^0_i$ in the left and a few boxes $\{C^0_{m'}\}_{m' \in M_{k-1}}$ in the right.}
  \label{f:set_a}
\end{figure}

\begin{proof}
Given $s$ as in the statement of the theorem, we choose $k = k(s)$ such that
\begin{equation}
  L_{k-1} < \frac{s}{2\sqrt{d}} \leq L_k,
\end{equation}
recall \eqref{e:Lk}.

Observe that we can assume $\zeta = \useconstant{c:density}/(4 \cdot 3^d)$ and choose $\zeta_0 = 2\zeta$ and $\zeta_k$ as in \eqref{e:rhok}. Note that with this choice
\begin{equation}
  \label{e:rho_bounded}
  \zeta \leq \zeta_k \leq 2\zeta, \text{ for every $k \geq 1$.}
\end{equation}

Pick $c$ large enough so that, if $s \geq c$, we have
\begin{align}
  \label{e:s_large}
  & L_k - 2R_k \geq L_k/2 \text{ and}\\
  & L_k^{\gamma/3} \geq 8(k+1)^2/\zeta,
\end{align}
as in \eqref{e:rho_large}. Therefore, we are allowed to use the Sieving Lemma~\ref{l:couple2}.

Consider a paving $\{C^1_i\}_{i \in I}$ of $\mathbb{Z}^d$ by boxes of side length $L_k - 2 R_k$ (which coincides with that of $C^1_m$ in \eqref{e:C12m}), see Figure~\ref{f:set_a}. Note that this paving is not made of $C^0_m$ boxes as in the previous section. In particular, the $C^0_m$ boxes corresponding to the $C^1_m$ above overlap.

This paving can be subdivided into the finer (scale $k - 1$) paving $\{C^0_m\}_{m \in M_{k-1}}$ as indicated (for one box) in Figure~\ref{f:set_a}.

Our first requirement is for the initial Poisson point process of particles to be balanced with respect to this finer paving. More precisely, we observe that $A$ can touch at most $|A|$ boxes in $\{C^0_m\}_{m \in M_{k-1}}$, and arguing as in \eqref{e:a_b} we obtain
\begin{equation}
  \begin{array}{e}
    \mathbb{P}_{\zeta \1{A}} \big[ (X^j_0)_{j \in J} \text{ is not $(2 \zeta)$-balanced} \big]  & \leq & |A| \exp \{ \zeta L_{k-1}^d (e^\theta -1) - 2 \theta \zeta L_{k-1}^d \}\\
    & \overset{\theta = 1/2 \atop \eqref{e:s_large}} \leq & |A| \exp \{ - c \zeta L_{k-1}^d\}
  \end{array}
\end{equation}
where the term \emph{balanced} above refers of course to the paving $\{C^0_m\}_{m \in M_{k-1}}$. Thus, we get by the Diaconis-Fulton representation that
\begin{equation}
  \label{e:balance_displace}
  \begin{split}
    \mathbb{P}_{\zeta \1{A}} \big[ & \text{some particle exits $B(A,s)$} \big]\\
    & \leq \sup_{(x^j)_{j \in J} \subseteq A \atop \text{$2\zeta$-balanced}} \mathbb{P}_{(x^j)_{j \in J}} \big[ \text{some particle exits $B(A,s)$} \big] + |A| \exp \{ - c \zeta L_{k-1}^d\}.
  \end{split}
\end{equation}

We are preparing to employ the Sieving Lemma~\ref{l:couple2} for the balanced collection above. For this, given a box in the coarse paving $C^1_i$, consider the corresponding box $C^0_i$ having the same center and side length $L_k$. We again observe that the boxes $C^0_i$ are not disjoint as their corresponding $\{C^1_i\}$. However,
\begin{equation}
  \label{e:3d}
  \text{each box $C^0_i$ intersects at most $3^d$ other such boxes,}
\end{equation}
see detail in Figure~\ref{f:set_a}.

We now restrict our attention to one box $C^1_i$ intersecting $A$. If the points $(x_j)$ are $2\zeta$-balanced (with respect to $\{C^0_m\}_{m \in M_{k-1}}$) as in \eqref{e:balance_displace}, we can split them into two collections that are $\zeta^1_k$-balanced, so that we can use the Sieving Lemma~\ref{l:couple2} and \eqref{e:rho_bounded} twice, to obtain that
\begin{equation}
  \begin{split}
    \sup_{(x^j)_{j \in J} \subseteq C^1_i \atop \text{$2\zeta$-balanced}} \mathbb{P}_{(x^j)_{j \in J}} & \big[ \text{some particle exits $B(A,s)$} \big]\\
    & \leq \mathbb{P}_{\mu^i(4\zeta)} \big[ \text{some particle exits $B(A,s)$} \big] + c \exp \{ - c L_{k-1}^{\gamma/3}\},
  \end{split}
\end{equation}
where $\mu^i(4\zeta)$ stands for the measure $4\zeta \cdot \1{C^0_i} \cdot \1{\cup_{m' \in M_{k-1}}C^2_{m'}}$.

Taking the union over all possible choices of $C^1_i$ intersecting $A$, we obtain
\begin{equation}
  \label{e:sieve_displace}
  \begin{split}
    \sup_{(x^j)_{j \in J} \subseteq A \atop \text{$2\zeta$-balanced}} & \mathbb{P}_{(x^j)_{j \in J}} \big[ \text{some particle exits $B(A,s)$} \big]\\
    & \overset{\eqref{e:3d}}\leq \mathbb{P}_{\mu(4 \cdot 3^d \zeta)} \big[ \text{some particle exits $B(A,s)$} \big] + c |A| \exp \{ - c L_{k-1}^{\gamma/3}\},
  \end{split}
\end{equation}
with $\mu(4 \cdot 3^d \zeta) = 4 \cdot 3^d \zeta \cdot \1{}\{\cup_i \text{$C^0_i$ intersecting $A$}\} \cdot \1{}\{\cup_{m' \in M_{k-1}} C^2_{m'}\}$.

To finish the proof, we use the following observation. Start the particles with law given by the intensity measure $\mu(4 \cdot 3^d \zeta)$, if every particle that starts at a given $C^2_{m'}$ does not leave the corresponding $C^0_{m'}$, then they did not exit $B(\tilde{A},\sqrt{d} L_k)$, where $\tilde{A} = \cup_i C^0_i$, where the union is taken over $C^0_i$ intersecting $A$. Since $\tilde{A} \subseteq B(A, \sqrt{d}L_k)$, we conclude that no particle had left $B(A,2\sqrt{d}L_k) \subset B(A,s)$ as desired.

We can therefore join \eqref{e:balance_displace}, \eqref{e:sieve_displace} and Theorem~\ref{t:main} to obtain that
\begin{equation}
  \begin{split}
    \mathbb{P}_{\zeta \1{A}} & \big[ \text{some particle} \text{ exits $B(A,s)$} \big]\\
    & \leq |A| \exp \{ -c \zeta L_{k-1}^{\gamma/3} \} + c |A| \exp \{ -c \log^2(L_{k-1}) \} \leq c |A| \exp \{ - c \log^2 s \}.
  \end{split}
\end{equation}
This finishes the proof of Theorem~\ref{t:set_displace} (since $4 \cdot 3^d \zeta = \useconstant{c:density}$). We may need to properly choose the constants, in order to take care of the case $s < c$.
\end{proof}

We are now in position to prove the existence of an absorption phase for our model. But before, let us quickly present the

\begin{proof}[Proof of Theorem~\ref{t:dd}]
Given a box $B$ of side length $n$, without loss of generality we may suppose that $B = [0,n)^d \cap \mathbb{Z}^d$. We define $B'$ to be $[n^{\varepsilon/2}, n - n^{\varepsilon/2})^d \cap \mathbb{Z}^d$.

Using the Diaconis-Fulton representation, it easy to see that
\begin{equation*}
  \begin{split}
    \mathbb{P}_{\zeta \1{B}} \big[\text{more than $n^{d - 1 + \varepsilon}$} & \text{ particles leave $B$}\big]\\
    & \leq \mathbb{P}_{\zeta \1{B}} \big[\text{more than $n^{d - 1 + \varepsilon}$ particles start in $B \setminus B'$}\big]\\
    & \quad + \mathbb{P}_{\zeta \1{B'}} \big[ \text{some particles exits $B$} \big].
  \end{split}
\end{equation*}
For $n$ large enough, the volume of $B \setminus B'$ is smaller or equal to $(1/2) n^{d - 1 + \varepsilon}$. Therefore, the first term above is bounded by $c \exp\{-c n^{d - 1 + \varepsilon}\}$, recall that $\zeta \leq 1$.

The bound on the second term of the above equation is obtained by a simple application of Theorem~\ref{t:set_displace}. This finishes the proof of Theorem~\ref{t:dd}.
\end{proof}

We now turn to the proof of our main result. The proof has two steps. First we ``turn off sleeping'' and let the particles walk until a stopping time (that depends on the particle), in order to accommodate them into a nested collection of annuli surrounding the origin, see the right hand side of Figure~\ref{f:S_in_out}. Then we ``turn sleeping back on'' and let the particles run, hopping that different annuli will not interact. For this we use Theorem~\ref{t:set_displace}.

\begin{proof}[Proof of Theorem~\ref{t:absorption}]
By \eqref{e:finite_approx} and Borel-Cantelli's lemma, it is clearly enough to show \eqref{e:short_tails}. Given $l$ as in \eqref{e:short_tails}, we choose $i_0$ such that
\begin{equation}
  \label{e:i_0}
  2^{10d (i_0 + 1) } \leq l < 2^{10d (i_0 + 2)},
\end{equation}
which is greater or equal to $4$ if $l \geq c$.

Fixed such value $i_0 \geq 4$ we partition the space $\mathbb{Z}^d$ into the disjoint annuli
\begin{equation}
  \label{e:annuli}
  B_i = \big\{ x \in \mathbb{Z}^d; 2^{i - 1} < |x| \leq 2^{i} \big\}, \text{ for $i \geq i_0 + 1$},
\end{equation}
together with the ball $B_{i_0} = B(0,2^{i_0})$.

We start by decomposing the desired probability as follows
\begin{equation}
  \label{e:eta_changes}
  \begin{split}
    \mathbb{P}_\zeta \big[\#\{ & \text{times $\eta(0)$ changes} \} > l\big]\\
    & \leq \mathbb{P}_{\zeta} \big[ \{ \text{some particle visits more than three $B_i$'s} \} \big]\\
    & \quad + \mathbb{P}_{\zeta} \big[\#\{ \text{visits to the origin when particles are killed on $B_{i_0 + 3}$}\} > l]
  \end{split}
\end{equation}
Let us start bounding the second term in the right hand side. We do this by first bounding the probability that more than $2\zeta \cdot \text{vol}(B(0,2^{i_0+2}))\}]$ particles start in $B_{i_0} \cup B_{i_0 + 1} \cup B_{i_0 + 2}$. Plus the probability that some of them takes longer than $2^{3(i_0 + 2)}$ to hit $B_{i_0 + 3}$. Observe that any particle starting at $B_{i_0} \cup B_{i_0 + 1} \cup B_{i_0 + 2}$ has a positive probability of hitting $B_{i_0 + 3}$ after $2^{2(i_0 + 2)}$ steps. Moreover, this probability is bounded away from zero, uniformly over the starting point and independently of $i_0$. Therefore,
\begin{equation}
  \label{e:i_0+3}
  \begin{split}
    \mathbb{P}_{\zeta} & \big[\#\{ \text{visits to the origin when particles are killed on $B_{i_0 + 3}$}\} > l]\\
    & \overset{\eqref{e:i_0}}\leq \mathbb{P}_\zeta [\#\{\text{particles in $B_{i_0} \cup B_{i_0 + 1} \cup B_{i_0 + 2}$}\} > 2\zeta \cdot \text{vol}(B(0,2^{i_0+2}))\}]\\
    & \qquad + 2\zeta \cdot \text{vol}(B(0,2^{i_0+2}) \sup_{x \in B_{i_0} \cup B_{i_0 + 1} \cup B_{i_0 + 2}}P_x[H_{B_{i_0 + 3}} > 2^{3(i_0 + 2))}]\\
    & \leq \exp \{ -c 2^{i_0} \} + c \zeta 2^{d i_0}\exp \{ - c 2^{i_0} \} \overset{\eqref{e:i_0}}\leq c \exp \{-c l^{10^{-d}}\}.
  \end{split}
\end{equation}

In order to bound the first term in the right-hand-side of \eqref{e:eta_changes} we need to introduce some further notation.
Given $i \geq i_0 + 1$ and any point $x \in B_i$, we associate to $x$ the two spheres in $\mathbb{Z}^d$
\begin{equation}
  \begin{split}
    & S_{\text{out}}(x) = \Big\{ y \in \mathbb{Z}^d; \lfloor |y| \rfloor = \lfloor |x| \rfloor + 3 \cdot 2^{i-2} \Big\} \text{ and},\\
    & S_{\text{in}}(x) = \Big\{ y \in \mathbb{Z}^d; \lfloor |y| \rfloor = \lfloor |x|/4 \rfloor + 3 \cdot 2^{i-4} \Big\},
  \end{split}
\end{equation}
see Figure~\ref{f:S_in_out} (left). Finally, if $x \in B_{i_0}$, then we set $S_{\text{in}}(x) = \varnothing$ and
\begin{equation}
  S_{\text{out}}(x) = \Big\{y; \lfloor |y| \rfloor = \lfloor |x|/2 \rfloor + 5 \cdot 2^{i_0 - 2} \Big\}.
\end{equation}

The exact form of the above definition is not important. The only properties of these sets that we need are
\begin{gather}
  \label{e:S_size}
  |S_{\text{in}}(x)| \text{ and } |S_{\text{out}}(x)| \leq c |x|^{d-1}, \text{ for $x \in B_i$, $i \geq i_0+1$},\\
  S_{\text{in}}(x) \subset B_{i-1} \text{ and } S_{\text{out}}(x) \subset B_{i+1}, \text{ for $x \in B_i, i \geq i_0 + 1$},\\
  d(S_{\text{in}}(x), \partial_i B_{i-1}) \text{ and } d(S_{\text{out}}(x), \partial_i B_{i+1}) \geq c|x|, \text{ for $x \in \mathbb{Z}^d$ and}\\
  \label{e:S_overlap}
  \# \big\{y \in B_i; S_{\text{in}}(y) = S_{\text{in}}(x) \text{ or } S_{\text{out}}(y) = S_{\text{out}}(x) \big\} \leq c |x|^{d-1}, \;\; x \in \mathbb{Z}^d,
\end{gather}
which are easy to verify from their definitions.

\begin{figure}[h]
\centering \includegraphics[width = .6\textwidth]{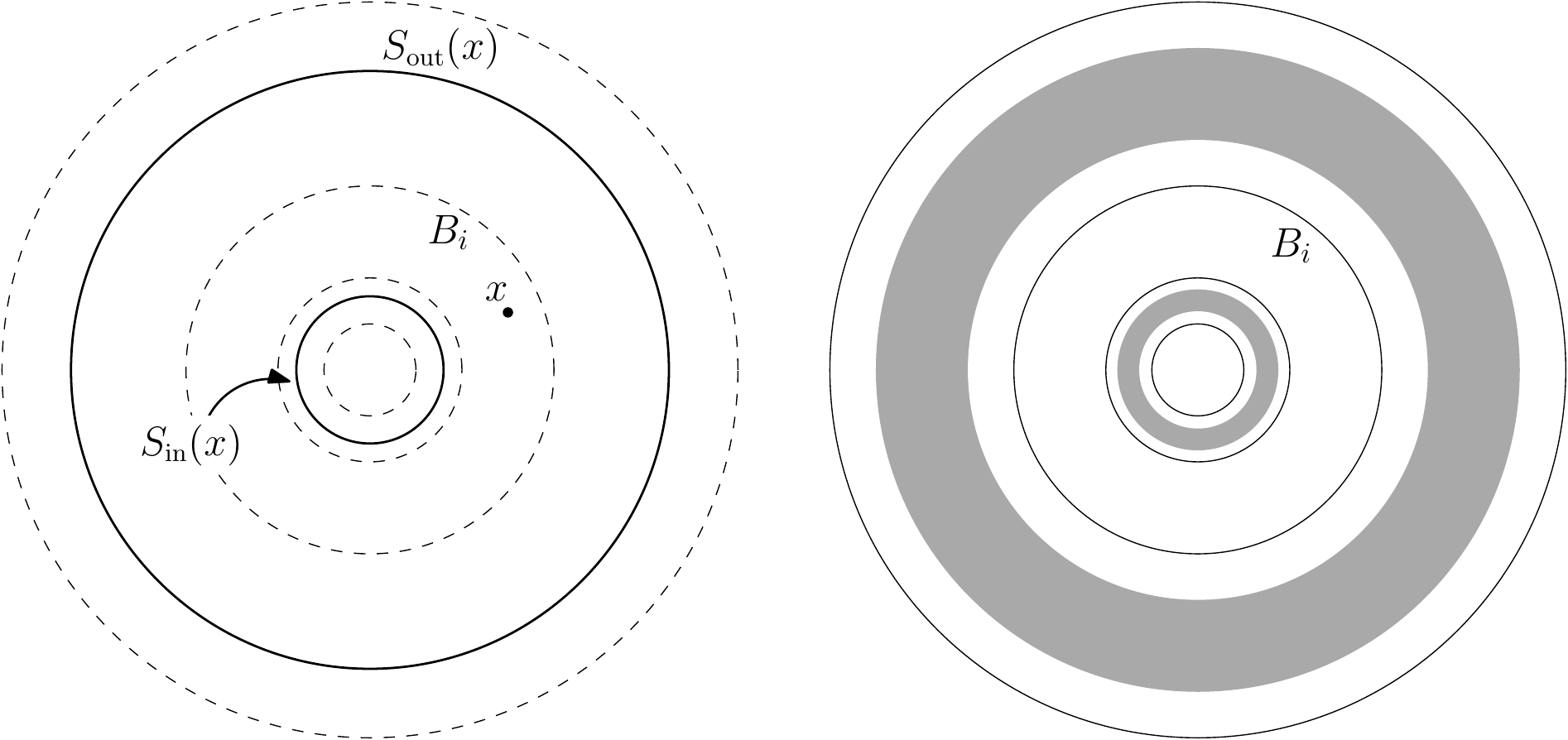}
  \caption{In the left, we depict a point $x \in B_i$ and its corresponding $S_{\text{in}}$ and $S_{\text{out}}$ (the dashed lines represent the boundaries of $B_i$'s). In the right, the gray area represents the union of $S_{\text{in}}(x)$ and $S_{\text{out}}(x)$, for all $x \in B_i$.}
  \label{f:S_in_out}
\end{figure}

We now define the stopping time
\begin{equation}
  T = \inf \{t \geq 0; X_t \in S_{\text{in}}(X_0) \cup S_{\text{out}}(X_0) \},
\end{equation}
which clearly satisfies $T < \infty$, $P_x$-a.s. We also define the set
\begin{equation}
  D = \mcup_{x \in \mathbb{Z}^d} S_{\text{in}}(x) \cup S_{\text{out}}(x),
\end{equation}
see Figure~\ref{f:S_in_out} (right), depicting two annuli from $D$.

For $i \geq i_0$ and $x \in B_i$, we use Lemma~7.6 in \cite{PT12}, to conclude that
\begin{equation}
  \label{e:uniform_entrance}
  P_x[X_T = y] \leq c |y|^{d-1}, \text{ for any $y \in S_{\text{in}}(x) \cup S_{\text{out}}(x)$.}
\end{equation}

Joining \eqref{e:S_size}, \eqref{e:S_overlap} and \eqref{e:uniform_entrance}, we obtain that: if $(X^j_0)_{j \geq 1}$ is distributed as a Poisson point process on $\mathbb{Z}^d$ with density $\zeta$, then
\newconstant{c:rho_D}
\begin{display}
  \label{e:pit_stop}
  under $\bigotimes_{j \geq 1} P_{X^j_0}$, the distribution of $(X^j_T)_{j \geq 1}$ follows a Poisson point process with intensity bounded by $\useconstant{c:rho_D} \zeta \cdot \1 D$.
\end{display}

We can now bound the first term in the right-hand-side of \eqref{e:eta_changes}. For this, fix any $\zeta \leq \useconstant{c:density}/(\useconstant{c:rho_D} 4 \cdot 3^d)$, and note that
\begin{equation*}
  \begin{split}
    \mathbb{P}_{\zeta} \big[ \{ & \text{some particle visits more than three $B_i$'s} \} \big]\\
    & \overset{\eqref{e:pit_stop}}\leq \mathbb{P}_{\useconstant{c:rho_D} \zeta \1 D} \big[ \{ \text{some particle leaves its corresponding $B_i$} \} \big]\\
    & \leq \sum_{i \geq i_0} \mathbb{P}_{\useconstant{c:rho_D} \zeta \1{D \cap B_i}} \big[ \text{some particle leaves $B_i$} \big]\\
  \end{split}
\end{equation*}
which, by Theorem~\ref{t:set_displace} is smaller or equal to
\begin{equation}
  \sum_{i \geq i_0} c (2^i)^d \exp\{ - c \log^2(c 2^i)\} \leq c \int_{c \log(x)}^\infty \exp\{ - s^2 \} \d s \leq \frac{1}{c \log(x)} \exp \{ - c \log(x)^2\},
\end{equation}
where $x = 2^{i_0}$. This, together with \eqref{e:i_0}, \eqref{e:eta_changes} and \eqref{e:i_0+3} finishes the proof of the theorem.
\end{proof}

\section{Simulation and domination of particles}
\label{s:simulation}

The main result of this section is the Lemma~\ref{l:couple} which is the main ingredient to prove the Sieving Lemma~\ref{l:couple2} in Section~\ref{s:sieving}. The Sieving Lemma was used several times during the proof of our main results. This section is devoted to the construction of a coupling between a collection of independent random walks and a Poisson point process.

\begin{remark}
  The Lemma~\ref{l:couple} below is very similar in spirit to Proposition~4.1 in \cite{PSSS11}. However, the results in \cite{PSSS11} are not formulated in the most convenient way for our use here (most notably, it concerns Brownian motions instead of random walks). Therefore, for the reader's convenience, we present a self contained proof of this coupling here. We would also like to point out that our proof follows a different approach than that of \cite{PSSS11}, employing the concept of \emph{Soft local times} from \cite{PT12} instead.
\end{remark}

Let us collect some simple facts concerning the heat kernel of a continuous time, simple random walk on $\Z^d$. Let $p_t(x,y)$ stands for the heat kernel, given by $p_t(x,y) = P_x[X_t = y]$. When confusion may arise, we write $p^d_t$ instead of $p_t$, to explicit the dimension under consideration.

\begin{lemma} \textnormal{($d=1$)}
 \label{l:monotoneheat}
 For any $t \geq 0$, the heat kernel $p_t(0,x)$ is monotone non-increasing in $x \geq 0$.
\end{lemma}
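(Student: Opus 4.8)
The plan is to analyse the Kolmogorov (forward) equation satisfied by $p_t(0,\cdot)$ and to propagate the desired monotonicity from the initial condition. After normalising the jump rates so that the generator is the discrete Laplacian $\mathcal{L}f(x)=f(x-1)-2f(x)+f(x+1)$ (the constant is irrelevant after a time change), the function $x\mapsto p_t(0,x)$ solves $\partial_t p_t(0,x)=\mathcal{L}p_t(0,\cdot)(x)$ with initial datum $\mathbbm{1}_{\{0\}}$, and, by the reflection symmetry of the walk, $p_t(0,-x)=p_t(0,x)$ for every $x$ and $t$.

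First I would introduce the sequence of consecutive differences $g_t(x):=p_t(0,x)-p_t(0,x+1)$, $x\ge 0$, which is exactly the quantity we want to show is nonnegative. Differentiating in $t$ and using the equation for $p_t$, one finds for $x\ge 1$
\[
\partial_t g_t(x)=g_t(x-1)-2g_t(x)+g_t(x+1),
\]
while at the boundary point $x=0$ the symmetry $p_t(0,-1)=p_t(0,1)$ collapses the corresponding expression to $\partial_t g_t(0)=-3g_t(0)+g_t(1)$. Hence $(g_t(x))_{x\ge 0}$ solves a linear ODE $\partial_t g_t=\mathcal{M}g_t$, where $\mathcal{M}$ is the bounded operator on sequences indexed by $\Z_{\ge 0}$ given by the nearest‑neighbour Laplacian in the bulk and by the rule $h\mapsto -3h(0)+h(1)$ at $0$.

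The key observation is that $\mathcal{M}+3I$ has only nonnegative entries, so $e^{t\mathcal{M}}=e^{-3t}e^{t(\mathcal{M}+3I)}$ is a positivity‑preserving semigroup — in fact the semigroup of a sub‑Markovian nearest‑neighbour walk on $\Z_{\ge 0}$ killed at $0$. Since the initial vector is $g_0=(1,0,0,\dots)\ge 0$, we conclude $g_t=e^{t\mathcal{M}}g_0\ge 0$ for all $t\ge 0$, which is precisely the statement of the lemma.

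The only steps needing a little care are the boundary term at $x=0$ — this is exactly where $p_t(0,-1)=p_t(0,1)$ enters, and one must check that the resulting off‑diagonal coefficient is nonnegative so that positivity is still preserved — and the justification of the term‑by‑term differentiation and of the infinite‑dimensional semigroup manipulations; both are routine because the generator is bounded and $g_t$ is summable (a telescoping difference of a summable sequence), so if preferred one can instead run the argument on a truncation $\{0,1,\dots,N\}$ with an added maximum‑principle comparison and let $N\to\infty$. I should note that a reflection‑principle argument very cleanly yields $p_t(0,x)\ge p_t(0,y)$ for $0\le x\le y$ with $x\equiv y\bmod 2$, but it does not compare values of opposite parity, which is why this slightly less direct route seems necessary.
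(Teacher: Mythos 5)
Your argument is correct, but it is a genuinely different route from the one the paper has in mind. The paper's (one-line) proof invokes reversibility, $p_t(0,x)=p_t(x,0)$, and then compares $p_t(x,0)$ with $p_t(y,0)$ for $0\le x\le y$ by a reflection-type coupling of two walks started at $x$ and $y$. You instead work analytically with the forward equation: the differences $g_t(x)=p_t(0,x)-p_t(0,x+1)$, $x\ge 0$, solve $\partial_t g_t=\mathcal{M}g_t$ where $\mathcal{M}$ is the discrete Laplacian on $\Z_{\ge 0}$ with the modified coefficient $-3g_t(0)+g_t(1)$ at the origin (coming from $p_t(0,-1)=p_t(0,1)$), and since $\mathcal{M}+3I$ has nonnegative entries and $\mathcal{M}$ is bounded, $e^{t\mathcal{M}}$ preserves positivity; as $g_0=(1,0,0,\dots)\ge 0$, monotonicity follows. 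I checked the boundary computation and the bulk recursion, and both are right; the remaining points you flag (term-by-term differentiation, uniqueness of the bounded/$\ell^1$ solution so that $g_t=e^{t\mathcal{M}}g_0$, or alternatively truncation plus a maximum principle) are indeed routine for a bounded generator. What each approach buys: the coupling proof is shorter and conceptually probabilistic, though one must handle the parity/crossing issue in the mirror coupling on $\Z$ with some care; your semigroup/maximum-principle argument is self-contained, sidesteps parity entirely (as you note, a naive reflection-principle comparison only handles points of equal parity), and generalizes readily to other birth-and-death kernels, at the cost of a slightly heavier setup.
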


\begin{proof}
 We should use reversibility and couple two walks starting at different positions.
\end{proof}

\begin{lemma} \textnormal{($d \geq 1$)}
 \label{l:localclt}
 For any $t \geq 0$ and $x = (x_1, \dots, x_d)$,
 \begin{align}
  \label{e:indcoord}
  p^d_t(0,x) & = \prod_{k = 1}^d p^1_t(0,x_k) \text{ and}\\
  \label{e:localclt}
  p^1_t(0,x) & \leq \frac{c}{\sqrt{t}}, \text{ for every $t \geq 0$, $x \in \Z$}.
 \end{align}
\end{lemma}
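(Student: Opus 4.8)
The statement to prove is Lemma~\ref{l:localclt}, which has two parts: the product formula \eqref{e:indcoord} and the one-dimensional on-diagonal bound \eqref{e:localclt}. The plan is to handle these separately, as the first is an exact identity and the second a quantitative estimate.

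For \eqref{e:indcoord}, the idea is to exploit the fact that a continuous-time simple random walk on $\Z^d$ (with the generator normalized so each coordinate direction gets rate $1/(2d)$ for each of its two neighbors, i.e.\ total jump rate $1$) decomposes into $d$ independent one-dimensional pieces. More precisely, I would observe that the generator $\Delta_d$ of the $d$-dimensional walk is the sum $\sum_{k=1}^d \Delta^{(k)}$ of the one-dimensional Laplacians acting on the respective coordinates, and these commute; hence $e^{t\Delta_d} = \prod_{k=1}^d e^{t\Delta^{(k)}}$, which when evaluated at $(0,x)$ gives exactly $\prod_k p^1_t(0,x_k)$. Probabilistically, one can realize this by running $d$ independent one-dimensional continuous-time walks, one per coordinate; the resulting process has the correct law because the superposition of the $d$ independent rate-$1$ jump clocks, selected uniformly, reproduces the $d$-dimensional dynamics. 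Either phrasing gives \eqref{e:indcoord} with essentially no computation.

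For \eqref{e:localclt}, the cleanest route is via the Fourier (characteristic function) representation: writing $p^1_t(0,x) = \frac{1}{2\pi}\int_{-\pi}^{\pi} e^{-t(1-\cos\theta)} e^{-ix\theta}\,\d\theta$, one bounds $|p^1_t(0,x)| \leq \frac{1}{2\pi}\int_{-\pi}^{\pi} e^{-t(1-\cos\theta)}\,\d\theta$ and then uses $1-\cos\theta \geq c\theta^2$ on $[-\pi,\pi]$ to get $\int_{-\pi}^{\pi} e^{-ct\theta^2}\,\d\theta \leq \int_{\R} e^{-ct\theta^2}\,\d\theta = \sqrt{\pi/(ct)}$, yielding the claimed $c/\sqrt t$. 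For small $t$ (say $t \leq 1$) the bound $p^1_t(0,x)\leq 1 \leq c/\sqrt t$ is trivial, so the estimate holds for all $t\geq 0$ after adjusting $c$. An alternative, more hands-on argument: $p^1_t(0,0) = e^{-t}\sum_{n\geq 0} \frac{(t/2)^{2n}}{(n!)^2} \cdot (\text{combinatorial factor}) = e^{-t} I_0(t)$ where $I_0$ is the modified Bessel function, and the classical asymptotics $I_0(t)\sim e^t/\sqrt{2\pi t}$ give the on-diagonal decay; monotonicity in $|x|$ (Lemma~\ref{l:monotoneheat}) then extends it to all $x$. I would present the Fourier argument as the main line since it is shortest and self-contained.

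The only mild subtlety — hardly an obstacle — is matching normalization conventions: one must be slightly careful that the "simple random walk" here has total jump rate $1$ (so that each coordinate walk in the decomposition has rate $1/d$, or alternatively that the convention in \eqref{e:localclt} already accounts for this), since the constant $c$ absorbs any such factor but the product formula \eqref{e:indcoord} requires the coordinate walks to share the same clock rate as is used for $p^1_t$ on the right-hand side. Since the paper's constants $c$ are allowed to depend on $d$, this causes no real difficulty; I would simply fix the convention explicitly at the start of the proof and note that both displayed claims are invariant under the choice up to the value of $c$.
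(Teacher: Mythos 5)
Your proposal is correct and follows essentially the same route as the paper, which simply invokes the independence of the coordinate evolutions for \eqref{e:indcoord} and ``standard random walk theory'' for \eqref{e:localclt}; your Fourier computation and the generator/commuting-semigroup phrasing are just the standard details behind those two citations, and your remark on matching the jump-rate normalization (so that the product formula is exact rather than holding only after a time change $t \mapsto t/d$) is the right point to fix at the outset.
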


\begin{proof}
 The first claim \eqref{e:indcoord} is a consequence of the independence of the evolution of coordinates of a continuous time random walk, while the second follows from standard random walk theory.
\end{proof}

The above lemma will be used in the proof of Lemma~\ref{l:integration}, which deals with the integration of the heat kernel under a balanced  cloud of starting particles. More precisely, Lemma~\ref{l:integration} is crucial in the proof of Lemma~\ref{l:couple}.

\medskip

We know that the heat kernel $p_t$ sums up to one on its second coordinate. What the lemma below attempts to do is to approximate this feature when we are not summing all the possible points in the second coordinate, but over a balanced collection $(x_j)_{j \in J}$, recall definition~\ref{d:balanced}. More precisely,

\begin{lemma}\textnormal{($d \geq 1$)}
 \label{l:integration}
 Let $\{C_i\}_{i \in I}$ be a paving of $\Z^d$ by disjoint boxes of side length $L$ in the sense of \eqref{e:paving}, see ~\ref{f:heat}. Consider also an $\zeta$-balanced collection $(x_j)_{j \in J}$ with respect to $\{C_i\}_{i \in I}$.
 Then, for any $t \geq 0$,
 \begin{equation}
  \label{e:integration}
  \sum_{j \in J} p_t(0,x_j) \leq \zeta \Big( 1 + \frac{cL}{\sqrt{t}} + \dots + \frac{c L^d}{\sqrt{t}^d} \Big).
 \end{equation}
\end{lemma}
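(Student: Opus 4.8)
The plan is to group the particles by the box of the paving that contains them, bound the contribution of each group by its cardinality times the largest value of the heat kernel on that box, and then use the product structure of both $p_t$ and of the paving to reduce everything to a one-dimensional estimate for $p^1_t$.

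Concretely, set $J_i = \{ j \in J : x_j \in C_i \}$. Since $(x_j)_{j \in J}$ is $\zeta$-balanced we have $|J_i| \le \zeta L^d$, so
\[
  \sum_{j \in J} p_t(0,x_j) \;=\; \sum_{i \in I}\sum_{j \in J_i} p_t(0,x_j) \;\le\; \zeta L^d \sum_{i \in I}\, \sup_{y \in C_i} p_t(0,y).
\]
Every box has the form $C_i = I^{(i)}_1 \times \dots \times I^{(i)}_d$ with $I^{(i)}_k$ an integer interval of length $L$, so by \eqref{e:indcoord} and nonnegativity of the factors, $\sup_{y \in C_i} p_t(0,y) = \prod_{k=1}^d \sup_{y \in I^{(i)}_k} p^1_t(0,y)$. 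Since the paving \eqref{e:paving} is itself a product of one-dimensional pavings of $\Z$ by translates of $[0,L)$, the sum over $i \in I$ factorizes, and
\[
  \sum_{i \in I} \sup_{y \in C_i} p_t(0,y) \;\le\; \prod_{k=1}^d \Sigma_t(L), \qquad \Sigma_t(L) := \sum_{m \in \Z}\, \sup_{y \in [a+mL,\, a+(m+1)L)} p^1_t(0,y),
\]
where $a$ is the (coordinate-dependent) offset; $\Sigma_t(L)$ does not depend on $a$ after the trivial relabelling of intervals below.

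The crux is the one-dimensional bound $\Sigma_t(L) \le \tfrac{c}{\sqrt t} + \tfrac1L$. The interval containing the origin and the two intervals adjacent to it contribute at most $3\sup_x p^1_t(0,x) \le c/\sqrt t$ by \eqref{e:localclt}. Any remaining interval $E$ to the right of $0$ has $\ell_E := \min E > L$, and $\sup_{y \in E} p^1_t(0,y) = p^1_t(0,\ell_E)$ by Lemma~\ref{l:monotoneheat}; moreover every integer in the block $\{\ell_E - L, \dots, \ell_E - 1\} \subset (0,\ell_E)$ has $p^1_t(0,\cdot) \ge p^1_t(0,\ell_E)$, again by monotonicity, so $p^1_t(0,\ell_E) \le \tfrac1L \sum_{y=\ell_E-L}^{\ell_E-1} p^1_t(0,y)$. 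As $E$ ranges over the far right intervals these blocks are disjoint and contained in $\{y \ge 1\}$, so they sum to at most $\tfrac1L \sum_{y \ge 1} p^1_t(0,y)$; arguing symmetrically on the left, the far intervals contribute at most $\tfrac1L \sum_{y \ne 0} p^1_t(0,y) \le \tfrac1L$, which gives the claimed bound on $\Sigma_t(L)$.

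Combining the displays and expanding the binomial,
\[
  \sum_{j \in J} p_t(0,x_j) \;\le\; \zeta L^d\Big(\tfrac{c}{\sqrt t}+\tfrac1L\Big)^d \;=\; \zeta\Big(1+\tfrac{cL}{\sqrt t}\Big)^d \;=\; \zeta\sum_{k=0}^d \binom dk\Big(\tfrac{cL}{\sqrt t}\Big)^k \;\le\; \zeta\Big(1+\tfrac{cL}{\sqrt t}+\dots+\tfrac{cL^d}{\sqrt t^d}\Big),
\]
where for $k \ge 1$ the factors $\binom dk c^k$ are absorbed into a single constant $c$, while the $k=0$ term is exactly $1$, producing the required leading term $\zeta$. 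The only genuinely delicate point is the estimate for $\Sigma_t(L)$, and specifically the use of Lemma~\ref{l:monotoneheat} to dominate the value of $p^1_t(0,\cdot)$ at the near endpoint of a far interval by the average of $p^1_t(0,\cdot)$ over the preceding block of $L$ sites; the rest is bookkeeping with the product structure of the paving and of the heat kernel.
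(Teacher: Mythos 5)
Your proof is correct, but it takes a genuinely different route from the paper's. The paper splits the boxes according to whether they meet the coordinate hyperplanes: for boxes away from the axes it uses a sign-shift map $\phi$ (pushing each box one step further from the origin) together with Lemma~\ref{l:monotoneheat} to dominate the contribution of those particles by $\zeta \sum_x p_t(0,x) \leq \zeta$, and it handles the remaining particles (those within distance $3L$ of a hyperplane) by projecting out one coordinate, paying a factor $c/\sqrt t$ via \eqref{e:localclt}, and running an induction on the dimension $d$. You instead tensorize completely: the balancedness gives the factor $\zeta L^d$ per box, the product structure \eqref{e:indcoord} of both the kernel and the paving reduces everything to the one-dimensional quantity $\Sigma_t(L)$, and your key step is the averaging trick, via Lemma~\ref{l:monotoneheat}, that dominates $p^1_t(0,\ell_E)$ for each far interval by $\tfrac1L$ times the mass of the preceding disjoint block of $L$ sites, giving $\Sigma_t(L)\le c/\sqrt t + 1/L$; the binomial expansion of $\zeta(1+cL/\sqrt t)^d$ then yields \eqref{e:integration}, with the $k=0$ term giving exactly the leading $\zeta$ (which is the feature that matters downstream, in \eqref{e:EGJ} and the sprinkling in Lemma~\ref{l:couple}, and which both arguments preserve). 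Your route is arguably cleaner: it replaces the dimension induction and the near-axis case analysis by a single one-dimensional estimate, and the only ingredients are the same two facts the paper uses (monotonicity and $p^1_t\le c/\sqrt t$). One small point to make explicit: to treat the far intervals on the negative half-line you need the symmetry $p^1_t(0,-y)=p^1_t(0,y)$ in order to apply the monotonicity lemma there, since Lemma~\ref{l:monotoneheat} is stated for $x\ge 0$; this is immediate for the symmetric walk (and is implicitly used in the paper's \eqref{e:phismall} as well), but it deserves a sentence.
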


\begin{figure}[h]
\centering \includegraphics[width = 0.7\textwidth]{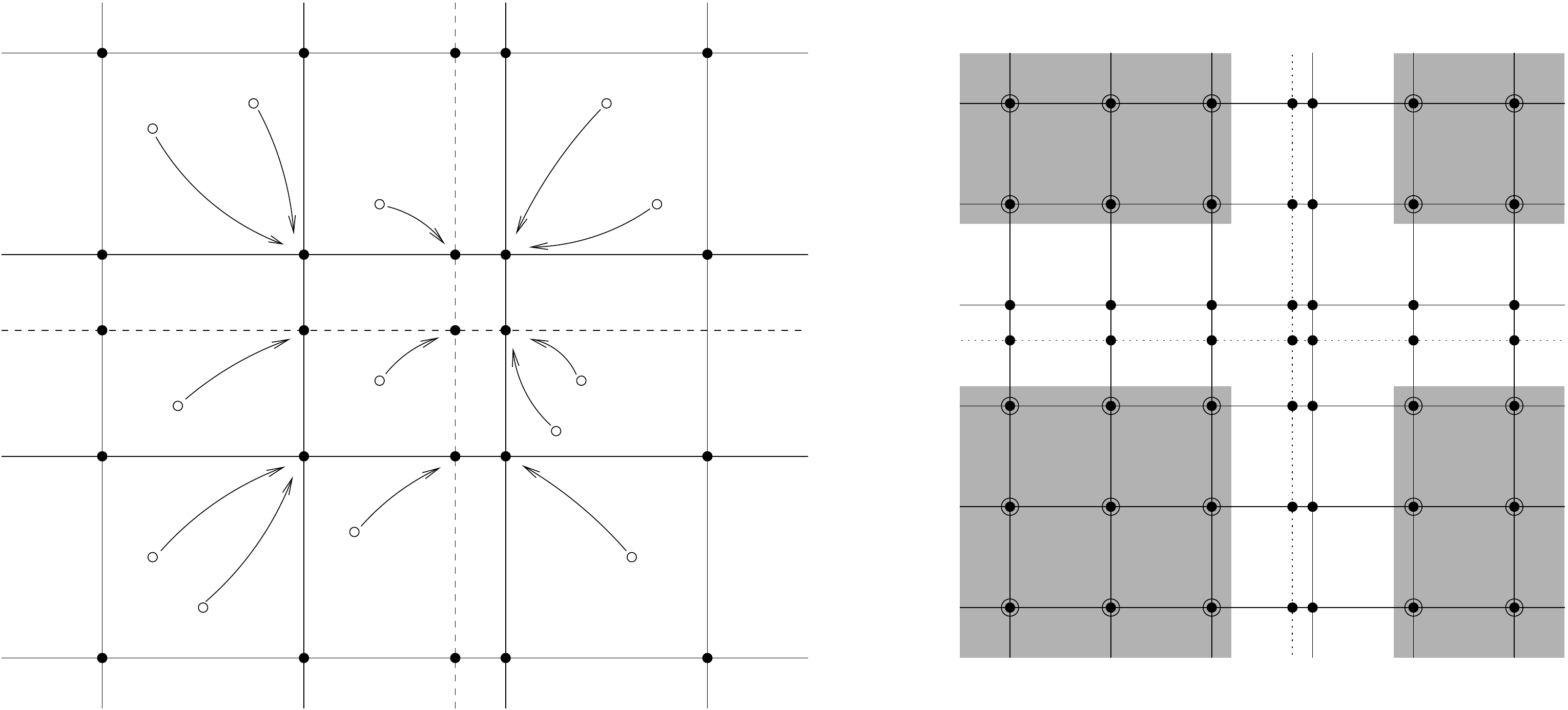}
  \caption{The boxes $\{C_i\}_{i \in I}$ and the points $x_j$ (circles) and their corresponding $\phi(x_j)$ (filled). Note the coordinate axes represented by dashed lines.}
  \label{f:heat}
\end{figure}

\begin{proof}
 We introduce the set $E = \{x \in \Z^d; \text{ at least one coordinate of $x$ is zero}\}$ composed of coordinate hyper-planes. Also, define the set of box indices
 \begin{equation}
  I' = \{i \in I; C_i \text{ does not intersect } E\}.
 \end{equation}
 Since the boxes $C_i$ are connected, for every $i \in I'$,
 \begin{display}
  \label{e:samesign}
  all points $x = (x_1, \dots, x_d) \in C_i$ have the same signs $s_i = \big(\tfrac{x_1}{|x_1|}, \dots, \tfrac{x_d}{|x_d|} \big)$.
 \end{display}
 Therefore, we can define the map $\phi: I' \to I$ that sends an index $i$ to the index of the box $C_{\phi(i)} = C_i + s_i L$. The importance of this map lies on the fact that
 \begin{align}
  \label{e:phiinject}
  & \phi \text{ is injective and}\\
  \label{e:phismall}
  & \min_{x \in C_i} p_t(0,x) \geq \max_{x \in C_{\phi(i)}} p_t(0,x), \text{ for every $t \geq 0$ and $i \in I'$.}
 \end{align}
 Indeed, this is a simple consequence of Lemma~\ref{l:monotoneheat} and the fact that the absolute value of any given coordinate of a point $x$ in $C_i$ is smaller than that of a point $x' \in C_{\phi(i)}$.

 We now split the balanced set $(x_j)_{j \in J}$ into the indices $J' = \{j \in J; x_j \in \bigcup_{i \in I'} C_{\phi(i)}\}$ and $J'' = J \setminus J'$. First, let us estimate, for any $t \geq 0$,
 \begin{equation}
  \label{e:Jprime}
  \begin{array}{e}
  \sum_{j \in J'} p_t(0,x_j) & \overset{\eqref{e:phiinject}}{\leq} & \zeta L^d \sum_{i \in I'} \max_{x \in C_{\phi(i)}} p_t(0,x) \overset{\eqref{e:phismall}}\leq \zeta L^d \sum_{i \in I'} \min_{x \in C_i} p_t(0,x)\\
  & \leq & \zeta \sum_{i \in I'} \sum_{x \in C_i} p_t(0, x) \leq \zeta.
  \end{array}
 \end{equation}

 Given a point $x = (x_1, \dots, x_d) \in J''$, we first claim that
 \begin{equation}
  \label{e:closetoaxis}
  \text{there exists a coordinate $k_x \in \{1, \dots, d\}$ for which $|x_k| \leq 3L$.}
 \end{equation}
 Supposing that the above does not hold, the signs $s = (\tfrac{x_1}{|x_1|}, \dots, \tfrac{x_d}{|x_d|})$ will be well defined and let $i \in I$ be the index of the box containing $y = x - Ls$. We will prove that $i$ belongs to $I'$ and $x \in C_{\phi(i)}$, contradicting the fact that $x \in J''$. For this, observe that all the coordinates of $y$ have absolute value at least $2L$, in which case, any other point $z \in C_i$ has its coordinates with absolute value larger or equal to $L$, implying that $i \in I'$. The fact that $x \in C_{\phi(i)}$ follows from the definition of $\phi$ below \eqref{e:samesign}. This contradiction establishes \eqref{e:closetoaxis}. For $x \in J''$, we define the projection $\tilde x = (x_1, x_2, \dots, 0, \dots, x_{d-1}, x_d)$, where we nullify the $k_x$-th coordinate.

 We will now show \eqref{e:integration} by induction in $d$. For $d = 1$, the result is obvious from \eqref{e:closetoaxis}, since there are at most $6 \zeta L$ points in $J''$. Now supposing that \eqref{e:integration} holds for $d - 1$ and given an $\zeta$-balanced collection $(x_j)_{j \in J} \subset \Z^d$,
 \begin{equation}
  \sum_{j \in J} p_t(0,x_j) \overset{\eqref{e:Jprime}}\leq \zeta + \sum_{j \in J''} p_t(0,x_j) \overset{\text{Lemma~\ref{l:monotoneheat}}}\leq \zeta + \sum_{j \in J''} p_t(0, \tilde x_j).
 \end{equation}
 Observe that for each $j \in J''$, the point $\tilde x_j$ belongs to a coordinate hyperplane, we therefore partition $J''$ into $J''_1, \dots, J''_d$, corresponding to each of these hyperplanes. Moreover, $(\tilde x_j)_{j \in J''_k}$ is a $6 \zeta L^{d-1}$-balanced collection in $\Z^{d-1}$. Thus, we can use our induction hypothesis to conclude that
 \begin{equation}
  \begin{array}{e}
  \sum_{j \in J} p^d_t(0,x_j) & \overset{{\eqref{e:indcoord} \atop \eqref{e:localclt}}}\leq & \zeta + \frac{c}{\sqrt{t}} \sum_{k = 1}^d \sum_{j \in J''_k} p^{d-1}_t(0,\tilde x_j)\\
  & \leq & \zeta + \frac{c\zeta L}{\sqrt{t}} \Big( 1 + \frac{cL}{\sqrt{t}} + \dots + \frac{c L^{d-1}}{\sqrt{t}^{d-1}} \Big)\\
  & \leq & \zeta \Big( 1 + \frac{cL}{\sqrt{t}} + \dots + \frac{c L^{d}}{\sqrt{t}^{d}} \Big),
  \end{array}
 \end{equation}
 finishing the proof of Lemma~\ref{l:integration}.
\end{proof}

Given a sequence $(x_j)_{j \in J}$ of points in $\mathbb{Z}^d$, we denote by $\bigotimes_{j \in J} P_{x_j}$ the law of an independent sequence of random walks starting on each of these points. Still in this context, we denote by $X^j_t$ the canonical coordinates of the $j$-th particle at time $t$.

Then next lemma provides us with a way to couple a collection of independent particles  with a Poisson point process. This procedure is very much inspired by the works in \cite{PSSS11} and \cite{PT12}.

\newconstant{c:couple}
\begin{lemma}
 \label{l:couple}
 Let $(x_j)_{j \in J} \subset \mathbb{Z}^d$ be an $\zeta$-balanced collection with respect to the paving $\{C_i\}_{i \in I}$, of side length $L$. Then, given $\zeta > 0$, we can then find a coupling $\mathbb{Q}$ between $\bigotimes_{j \in J} P_{x_j}$ and the law of a Poisson point process $\sum_{j' \in J'} \delta_{Z_j}$ on $\mathbb{Z}^d$ with intensity $\zeta' \geq \zeta$, in such a way that
 \begin{equation}
  \mathbb{Q} \Big[ \1{D} \cdot \sum_{j \in J} \delta_{X^j_t} \leq \1{D} \cdot \sum_{j' \in J'} \delta_{Z_{j'}} \Big] \geq 1 - |D| \; \exp \Big\{ - (\zeta' - \zeta)L^d + c \zeta \frac{L^{d+1}}{\sqrt{t}} \Big\},
 \end{equation}
 for any set $D \subset \Z^d$ and every $t \geq \useconstant{c:couple} L^2$.
\end{lemma}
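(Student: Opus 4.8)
The plan is to construct the coupling $\mathbb{Q}$ by the \emph{soft local time} technique of \cite{PT12} applied to the time-$t$ marginals of the walks, and then to estimate the resulting soft local time using Lemmas~\ref{l:localclt} and~\ref{l:integration}.

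\emph{Step 1 (the coupling).} Put $g_j:=p_t(x_j,\cdot)$, a genuine probability measure on $\Z^d$ for every $j\in J$, and let $\eta$ be a Poisson point process on $\Z^d\times\R_+$ with intensity $(\text{counting})\otimes(\text{Lebesgue})$. Following \cite{PT12}, sample $(X^j_t)_{j\in J}$ from $\eta$ one particle at a time: having accumulated a soft local time $G_{j-1}(\cdot)$, set $\xi_j=\inf\{\xi>0:(z,G_{j-1}(z)+\xi g_j(z))\in\eta\ \text{for some }z\}$, let $X^j_t$ be the (a.s.\ unique) minimizing $z$, and update $G_j:=G_{j-1}+\xi_j g_j$. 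Since the $g_j$ are probability measures, the standard properties of this construction give that the $X^j_t$ are independent with the correct marginals $X^j_t\sim g_j$, that $(\xi_j)_{j\in J}$ are i.i.d.\ $\mathrm{Exp}(1)$, and that, writing $G:=\sum_{j\in J}\xi_jg_j$, the configuration $\sum_{j'\in J'}\delta_{Z_{j'}}$ obtained by projecting $\eta\cap(\Z^d\times[0,\zeta'])$ onto $\Z^d$ is a Poisson point process of intensity $\zeta'\geq\zeta$ which, on the event $\{G(z)\leq\zeta'\ \text{for all }z\in D\}$, dominates $\1{D}\sum_{j\in J}\delta_{X^j_t}$: each $X^j_t\in D$ is sampled at height $G_j(X^j_t)\leq G(X^j_t)\leq\zeta'$, and the map sending $j$ to the point of $\eta$ used at step $j$ is injective. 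One then extends $\mathbb{Q}$ to a coupling of full random-walk trajectories by inserting the bridges conditionally, which affects nothing above. It therefore suffices to bound $\mathbb{P}[G(z)>\zeta']$ for each fixed $z\in D$ and to sum over $z$.

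\emph{Step 2 (bounding $G(z)$).} Fix $z$. Then $G(z)=\sum_{j\in J}\xi_j w_j$ with deterministic weights $w_j:=p_t(x_j,z)\geq0$, so the Chernoff bound gives $\mathbb{P}[G(z)>\zeta']\leq e^{-\theta\zeta'}\prod_{j\in J}(1-\theta w_j)^{-1}$ for any $0<\theta<1/\max_j w_j$. Two estimates on the weights are needed. First, by \eqref{e:indcoord}--\eqref{e:localclt}, $\max_j w_j\leq c\,t^{-d/2}$. Second, since the translated family $(x_j-z)_{j\in J}$ is $\zeta$-balanced with respect to the translated paving $\{C_i-z\}_{i\in I}$ (again a disjoint paving of side length $L$), Lemma~\ref{l:integration} yields $\sum_j w_j\leq\zeta\big(1+cL/\sqrt{t}+\dots+cL^d/\sqrt{t}^{\,d}\big)\leq\zeta(1+cL/\sqrt{t})$, the last step being valid once $L/\sqrt{t}$ is small, which is guaranteed by taking $\useconstant{c:couple}$ large in $t\geq\useconstant{c:couple}L^2$. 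Now pick the specific value $\theta=L^d$; enlarging $\useconstant{c:couple}$ if necessary makes $\theta\max_j w_j\leq cL^dt^{-d/2}\leq cL/\sqrt{t}\leq 1/2$, so that $\theta<1/\max_j w_j$ and $-\log(1-\theta w_j)\leq\theta w_j(1+c\theta w_j)$, whence $\prod_j(1-\theta w_j)^{-1}\leq\exp\{\theta(\sum_jw_j)(1+c\theta\max_jw_j)\}\leq\exp\{\theta\zeta(1+cL/\sqrt{t})\}$. Combining the two bounds,
\begin{equation*}
  \mathbb{P}[G(z)>\zeta']\leq\exp\big\{-\theta\big((\zeta'-\zeta)-c\zeta L/\sqrt{t}\big)\big\}=\exp\big\{-L^d(\zeta'-\zeta)+c\zeta L^{d+1}/\sqrt{t}\big\},
\end{equation*}
and summing over $z\in D$ gives the asserted inequality (which is anyway vacuous whenever its right-hand side is $\geq1$).

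\emph{Main obstacle.} The delicate point is the calibration in Step 2: $\theta$ must be of order exactly $L^d$ in order to produce the leading term $L^d(\zeta'-\zeta)$, yet it has to remain below $1/\max_j p_t(x_j,z)\sim t^{d/2}$ for the moment generating function to be finite, and simultaneously the overshoot $\sum_j p_t(x_j,z)-\zeta$ must be kept of order $\zeta L/\sqrt{t}$ (which is exactly the content of Lemma~\ref{l:integration}); the hypothesis $t\geq\useconstant{c:couple}L^2$ is precisely what reconciles these constraints. Everything else — checking that the soft-local-time construction applies to the (possibly countably infinitely many) probability densities $g_j$, and adjusting constants across dimensions — is routine.
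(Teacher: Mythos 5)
Your proposal is correct and follows essentially the same route as the paper: the coupling is the soft local time construction of \cite{PT12} (this is exactly what Corollary~\ref{c:couplesystem} in the appendix packages, with $G_J(z)=\sum_j \xi_j p_t(x_j,z)$ and i.i.d.\ $\mathrm{Exp}(1)$ weights), and the tail bound on $G_J(z)$ is the same exponential Chebyshev argument with $\theta=L^d$, using Lemma~\ref{l:integration} for $\sum_j p_t(x_j,z)\leq \zeta(1+cL/\sqrt t)$ and the local CLT bound \eqref{e:localclt} to keep $L^d p_t(x_j,z)$ small under $t\geq \useconstant{c:couple}L^2$, followed by a union bound over $z\in D$.
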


\begin{proof}
 Using Corollary~\ref{c:couplesystem}, we obtain a coupling $\mathbb{Q}$ such that
 \begin{equation}
  \label{e:coupleGG}
  \mathbb{Q} \Big[ \1{D} \cdot \sum_{j \in J} \delta_{X^j_t} \leq \1{D} \cdot \sum_{j' \in J'} \delta_{Z_j} \Big] \geq \mathbb{Q} \big[ G_J(z) \leq \zeta', \text{ for every $z \in D$}\big],
 \end{equation}
 where $G_J(z) = \sum_{j \in J} \xi_j \; p_{t}(x_j, z)$ and $\xi_j$ is an i.i.d. sequence of Exp($1$) distributed random variables.

 We are going to estimate the right hand side of \eqref{e:coupleGG} using concentration inequalities. For this, let us first estimate the expectation
 \begin{equation}
  \label{e:EGJ}
  E^{\mathbb{Q}}[G_J(z)] = \sum_{j \in J} p_{t}(0, x_j - z) \; \; \overset{\mathclap{\text{Lemma~\ref{l:integration}}}}\leq \quad \; \zeta \Big( 1 + \dots + \frac{cL^d}{\sqrt{t}^d} \Big) \overset{t \geq cL^2}\leq \zeta \Big( 1 + c\frac{L}{\sqrt{t}} \Big).
 \end{equation}
 Given $z \in D$, we write $p_j$ for $p_{t}(x_j, z)$ and estimate
 \begin{align}
  \nonumber
  \mathbb{Q} \big[ & G_J(z) > \zeta', \text{ for some $z \in D$}\big] \leq |D| \cdot \mathbb{Q} \Big[ G_J(z) > \zeta' \Big]\\
  & \leq |D| \cdot e^{-\zeta'L^d} \cdot E^{\mathbb{Q}} \big[ \exp\{ L^d G_J(z) \} \big] \; = \; |D| \cdot e^{-\zeta'L^d} \cdot \smash{\prod_{j \in J}} E^{\mathbb{Q}} \big[ \exp\{ L^d \xi_j p_j \} \big]
  \intertext{and from \eqref{e:localclt} we get that, for $t \geq c L^2$,}
  \nonumber
  & \leq |D| \cdot e^{-\zeta' L^d} \cdot \prod_{\smash{j \in J}} \Big(1 - L^d p_j \Big)^{-1} \; \leq \; |D| \cdot \exp \Big\{ - \zeta'L^d + \sum_{\smash{j \in J}} L^d p_j + c (L^d p_j)^2 \Big\}\\
  \intertext{for $t \geq \useconstant{c:couple} L^2$ we use \eqref{e:EGJ} and \eqref{e:localclt} get}
  \nonumber
  & \leq |D| \cdot \exp \Big\{ -\zeta'L^d + \zeta L^d (1 + \frac{cL}{\sqrt{t}}) (1 + c \sup_j\{L^d p_j\}) \Big\}\\
  \nonumber
  & \leq |D| \cdot \exp \Big\{ -\zeta'L^d + \zeta L^d(1 + \frac{cL}{\sqrt{t}}) \Big\},
 \end{align}
 yielding the result.
\end{proof}

\section{Proof of the Sieving Lemma~\ref{l:couple2}}
\label{s:sieving}

The proofs in this section make use of the ``soft local time'' technique, introduced in \cite{PT12} and also used in \cite{2014arXiv1401.4498H} for a similar purpose.

Fix $q \in \{1, 2\}$ for the rest of this section and define
\begin{equation}
  \label{e:set_B}
  B = C^q_m \cap \Big(\cup_{m' \in M_k} C^2_{m'}\Big),
\end{equation}
which is the support of the sieved measure $Y_{j'}$ in Lemma~\ref{l:couple2}.

Given a collection $(x_j)_{j \in J}$, we define the stopping time for the $j$-th particle (called \emph{sieving time}) as
\begin{equation}
  \label{e:T_j}
  T_j = L_k^{2.04} \wedge \inf \Big\{ t_s; \text{ $X_{t_s} \in B$, with $s \geq 1$ integer} \Big\},
\end{equation}
where the $t_s$ stands for the equally spaced sequence $t_s = sL_k^{2.02}$, $s \geq 0$. Note that we have defined $t_0 = 0$, but it does not feature in \eqref{e:T_j}, so that $T_j$ is at least $L_k^{2.02}$. The stopping times $T_j$ that appeared in the statement of Lemma~\ref{l:couple2} are nothing more that the ones above.

Before starting the proof of Lemma~\ref{l:couple2}, let us present an auxiliary result, which encompasses the main difficulty of this section. It looks very similar to the Sieving Lemma, however we start with a very sparse collection of particles and we are allowed to dominate them by a sieved law with much higher density (which is not the case in Lemma~\ref{l:couple2}).

Below, let the sequences $L_k$, $R_k$ and $\zeta^r_k$ be as in \eqref{e:Lk}, \eqref{e:Rk} and \eqref{e:rhokr}. Fix also $q = 0, 1$.
\newconstant{c:couple_rest}
\begin{lemma}
 \label{l:couple_rest}
 Fix some $k > \useconstant{c:couple_rest}$, an index $m \in M_{k+1}$ and a collection $(x_j)_{j \in J}$ in $C_m^{q+1}$ which is $(L_k^{-2\gamma/3})$-balanced with respect to the paving $\{C_{m'}\}_{m'j \in M_k}$. Then, we can find a coupling $\mathbb{Q}$ between $\bigotimes_{j \in J} P_{x_j}$ and a sieved collection $(Y_{j'})_{j' \in J'}$ with intensity $\mu^q_m(L_k^{-\gamma/3})$, in such a way that
\begin{equation}
  \mathbb{Q} \Big[ \sum_{j \in J} \delta_{X^j_{T_j}} \leq \sum_{j' \in J'} \delta_{Y_{j'}} \Big] \geq 1 - c \exp \{ -c L_k^{0.08} \},
\end{equation}
where $T_j$ are defined as in \eqref{e:T_j}.
\end{lemma}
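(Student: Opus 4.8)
I would keep the hopping process $s\mapsto X_{t_s}$ ($t_s=sL_k^{2.02}$) at the centre, and split the statement into two essentially independent tasks: (i) show that, except on an event of probability $c\exp\{-cL_k^{0.08}\}$, \emph{every} walk $X^j$, $j\in J$, enters the set $B$ of \eqref{e:set_B} at one of the hopping times strictly before the deterministic horizon $L_k^{2.04}$ --- so that $T_j$ is genuinely a sieving time and $X^j_{T_j}\in B\subset C_m^q$ --- and moreover does not leave $C_m^q$ before then; (ii) on that event, couple the stopped cloud $\sum_{j\in J}\delta_{X^j_{T_j}}$ with a Poisson point process $\sum_{j'\in J'}\delta_{Y_{j'}}$ of intensity $\mu^q_m(L_k^{-\gamma/3})$ in such a way that the former is dominated by the latter. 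The second task is where the hypothesis ``$L_k^{-2\gamma/3}$-balanced'' gets used against the generously larger target density $L_k^{-\gamma/3}$.

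\textbf{Task (i): fast sieving and no escape.} For $k$ large the half-kernel $C_i'=[L_k/4,3L_k/4)^d\cap\Z^d+L_ki$ of a scale-$k$ box is contained in the kernel $C^2_{m'}$ (since $2R_k\ll L_k/4$); hence a walk sitting in some $C_i'$ while still inside $C_m^q$ automatically lies in $B=C_m^q\cap\bigcup_{m'\in M_k}C^2_{m'}$. Applying Lemma~\ref{l:fall} with $\ell=L_k$ and $r=L_k^{2.02}\geq\ell^2$, the estimate \eqref{e:S_fast} controls the probability that the hopping process fails to meet $\bigcup_iC_i'$ before the cutoff, while \eqref{e:not_far} controls the probability that the walk has moved farther than the buffer $R_{k+1}$ separating $C_m^{q+1}$ (where the particles start) from $C_m^q$. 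A union bound over the polynomially many particles of $J$ then yields the error $c\exp\{-cL_k^{0.08}\}$ of the statement; the precise exponent is dictated by the interplay of $R_{k+1}=\lfloor L_k^{\gamma}\rfloor L_k$, the hopping length $L_k^{2.02}$ and the cutoff $L_k^{2.04}$ in \eqref{e:T_j}.

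\textbf{Task (ii): domination via soft local times.} Here I would apply the soft-local-time coupling underlying Lemma~\ref{l:couple}, adapted to the stopped endpoints $X^j_{T_j}$, which are independent across $j$. The key input is a bound on the ``soft local time'' $G_J(z)=\sum_{j\in J}\xi_j\,\nu^j(\{z\})$ (with $\xi_j$ i.i.d.\ $\mathrm{Exp}(1)$ and $\nu^j=\mathrm{Law}(X^j_{T_j})$) for $z\in B$. Writing $\nu^j(\{z\})\leq\sum_{s\geq1}P^{x_j}[X_{t_1},\dots,X_{t_{s-1}}\notin B,\ X_{t_s}=z]\leq\sum_{s=1}^{S_{\max}}p_{t_s}(x_j,z)+P^{x_j}[\text{never sieved}]$, and using that each $t_s\geq L_k^{2.02}\geq \useconstant{c:couple}L_k^2$, Lemma~\ref{l:integration} gives $\sum_{j\in J}p_{t_s}(x_j,z)\leq L_k^{-2\gamma/3}(1+cL_k/\sqrt{t_s})^d\leq cL_k^{-2\gamma/3}$ for the $L_k^{-2\gamma/3}$-balanced collection; summing over the $S_{\max}$ relevant hopping times and adding the (small, by Task~(i)) never-sieved term, one obtains $\sup_{z\in B}E[G_J(z)]=\sup_{z\in B}\sum_{j}\nu^j(\{z\})\leq c\,S_{\max}L_k^{-2\gamma/3}\leq L_k^{-\gamma/3}$ for $k$ large --- this is exactly the point where the room $2\gamma/3<\gamma/3$ between hypothesis and conclusion, together with the small number of hopping steps, is spent. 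Combined with $\sup_{z,j}\nu^j(\{z\})\leq cL_k^{-1.01d}$ (local CLT), the concentration step of Lemma~\ref{l:couple} gives $\mathbb{Q}[G_J(z)\leq L_k^{-\gamma/3}\ \forall z\in B]\geq 1-|B|\exp\{-cL_k^{d-o(1)}\}$, and on that event the construction produces $(Y_{j'})$, Poisson with intensity $\mu^q_m(L_k^{-\gamma/3})=L_k^{-\gamma/3}\1{B}$, with $\sum_j\delta_{X^j_{T_j}}\leq\sum_{j'}\delta_{Y_{j'}}$.

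\textbf{Main obstacle and conclusion.} The delicate point is Task~(ii): making the soft-local-time bookkeeping handle the \emph{stopped} endpoints (rather than a single fixed time) while still summing over the $x_j$'s through the balancedness via Lemma~\ref{l:integration}, and checking the arithmetic $S_{\max}L_k^{-2\gamma/3}\leq L_k^{-\gamma/3}$ with the given parameters $\gamma=1/10$, $t_s=sL_k^{2.02}$, cutoff $L_k^{2.04}$. All the coupling errors are of order $\exp\{-cL_k^{d-o(1)}\}$, hence are dominated by the escape/sieving error $c\exp\{-cL_k^{0.08}\}$ from Task~(i), which is the one appearing in the statement. Intersecting the good event of Task~(i) with that of Task~(ii) finishes the proof, after a final adjustment of $\useconstant{c:couple_rest}$ to absorb the requirement ``$k$ large''.
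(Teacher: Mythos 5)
Your proposal is correct in substance, but it takes a genuinely different route from the paper's proof. The paper never couples the stopped endpoints in one shot: it iterates Lemma~\ref{l:couple} stage by stage along the hopping times, coupling at each stage $s \leq S = \lfloor L_k^{\gamma/3}/2\rfloor$ the particles freshly fallen into $B$ (the sets $F_s$ of \eqref{e:J_n}) with a fresh independent Poisson cloud of density $2L_k^{-2\gamma/3}$, and then sums the $S$ clouds to obtain the sieved intensity $L_k^{-\gamma/3}$ (see \eqref{e:sum_Poisson}); the price is that after each stage one must verify that the not-yet-sieved particles, which live in the thin annuli $C^0_{m'}\setminus C^2_{m'}$, are again $(L_k^{-2\gamma/3})$-balanced (the events $A_s$, estimate \eqref{e:balanced_1}) and one must assemble the couplings $\mathbb{Q}^1,\dots,\mathbb{Q}^S$ with the right conditional independence. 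You instead invoke the soft-local-time construction (Corollary~\ref{c:couplesystem}) once, directly for the laws $\nu^j$ of the stopped endpoints $X^j_{T_j}$ (which are indeed independent over $j$, so the machinery applies), and control the expected soft local time on $B$ via $\nu^j(\{z\}) \leq \sum_{s\leq S_{\max}} p_{t_s}(x_j,z) + P_{x_j}[T_j=L_k^{2.04}]$ together with Lemma~\ref{l:integration} at each of the at most $S_{\max}=L_k^{0.02}$ hopping times; since $0.02<\gamma/3$, the resulting bound $cL_k^{0.02-2\gamma/3}$ is indeed below the target density $L_k^{-\gamma/3}$, and the concentration step of Lemma~\ref{l:couple} then goes through verbatim. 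What your route buys is a visibly simpler proof (no re-balancing events, no iteration of couplings); what it costs is having to redo, mildly, the soft-local-time estimate for stopped rather than fixed-time endpoints, which is legitimate exactly because Corollary~\ref{c:couplesystem} only needs the endpoint densities. The factor you lose by summing over hopping times sits in the mean soft local time, whereas the paper spends it as the density $2L_k^{-2\gamma/3}\cdot S$; the arithmetic is the same. One bookkeeping remark: the event that some $T_j$ hits the cutoff $L_k^{2.04}$ has probability only $c\exp\{-cL_k^{0.02}\}$ (there are just $L_k^{0.02}$ hopping trials, cf.\ \eqref{e:S_empty}), so the error you attribute to Task~(i) is governed by this slightly weaker stretched exponential rather than by $\exp\{-cL_k^{0.08}\}$; this discrepancy is present in the paper's own statement and proof as well, and is harmless for every subsequent application, so it is not a gap specific to your argument.
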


We postpone the proof of the above lemma to later in the text. For now, let us see how it helps proving the Sieving Lemma~\ref{l:couple2}.

\medskip

The above lemma is very useful in the process of ``sieving'', however, it is very rough in the sense that it needs a high density of particles in the sieved law for the domination to work ($L_k^{-\gamma/3} \gg L_k^{-2\gamma/3}$). We will now turn to the proof of the Sieving Lemma~\ref{l:couple2}, which was essential in the proof of Theorem~\ref{t:main}.

Recall the definitions \eqref{e:pk}, \eqref{e:rhokr} and the statement of the Sieving Lemma~\ref{l:couple2} in Section~\ref{s:renorm}.

The proof of this lemma strongly relies on the fact that the density of the $x_j$'s is slightly smaller than that of the sieved law. These two important densities to keep in mind during this proof are $\zeta_k^r > \zeta_k^{r+1}$.

\begin{proof}[Proof of the Sieving Lemma~\ref{l:couple2}]
The first thing we observe in the proof is that the stopping times defined in \eqref{e:T_j} will rarely invoke the minimum with $L_k^{2.04}$. More precisely,
\begin{equation}
  \label{e:S_empty}
  \bigotimes_{j \in J} P_{x_j} \big[ T_j = L_k^{2.04}, \text{ for some $j \in J$} \big] \leq c \exp \{ -c L_k^{0.02} \}.
\end{equation}
The bound \eqref{e:S_empty} follows from  Lemma~\ref{l:fall} as, for $k \geq c$,
\begin{equation}
  \label{e:no_exit}
  \begin{split}
    \smash{\bigotimes_{j \in J}} P_{x_j} \big[ & \text{some particle $(X^j)_{j \in J}$ exits $C_m^q$ before $L_k^{2.04}$} \big]\\
    & \qquad \leq \useconstant{c:shake} |J| \exp \Big\{ \useconstant{c:shake2} \useconstant{c:growth} \frac{L_k^{1.1}}{L_k^{(2.02 + 0.02)/2}} \Big\} \leq c |J| \exp\{ - c L_k^{0.08}\}.
  \end{split}
\end{equation}

The proof goes in two steps, we first use Lemma~\ref{l:couple} in order to couple the majority of the particles (those that have $T_j = t_1$) with a sieved law, then we apply Lemma~\ref{l:couple_rest} to deal with the remaining dust.

We first let the random walks (starting at $(x_j)_{j \in J}$) run for time $t = L_k^{2.02}$ and apply Lemma~\ref{l:couple} to couple them with a Poisson point process $\sum_{j' \in J'} \delta_{Z_{j'}}$, having intensity $(\zeta_k^q + \zeta_k^{q-1})/2 \leq 1$, in such a way that
\begin{align}
  \label{e:first_couple}
  \nonumber
  \mathbb{Q}' \Big[ \1{C_m} \; \sum_{j \in J} \delta_{X^j_t} \not \leq \sum_{j' \in J'} \delta_{Z_{j'}} \Big] & \leq L_{k+1}^d \exp \Big\{ - \big(\tfrac{\zeta_k^{q-1} - \zeta_k^q}{2} \big)
  L_k^d + c \zeta_k^q L_k^{d+1-1.01} \Big\}\\[3mm]
  & \overset{(k > c)}\leq L_{k+1}^d \exp \Big\{ - c \frac{\zeta_0 L_k}{(k+1)^2} \Big\} \overset{\eqref{e:rho_large}}\leq c \exp \Big\{ - c L_k^{1-\gamma/3} \Big\}.
\end{align}

Observe that the Poisson point process $\sum_{j'\in J'} \delta_{Z_j}$ above is not supported on the set $B = \cup_{m' \in M_k} C^0_{m'}$ as is the sieved law. This will be dealt with later by restricting the sum to the indices $J'' = \{j' \in J'; Z_{j'} \in B\}$.
When we do this, we will be left with some remaining walks (those with indices $J_1 = \{j \in J; T_j > L_k^{2.02}\}$).

The points $X^j_{L_k^{2.02}}$, for $j \in J_1$ are exactly those which have fallen in the collection of annuli $A = \cup_{m' \in M_k} C^0_{m'} \setminus C^2_{m'}$ and they should be sparse, as the fraction of points in $C^0_{m'}$ that intersects $A$ is not larger than
\begin{equation}
  \label{e:annuli_density}
  \displaystyle \frac{2d L_k^{d-1} (2R_k)}{L_k^d} = 4d\frac{R_k}{L_k} \leq 4dL_k^{-\gamma}.
\end{equation}
Therefore, $\mathbb{Q}' \big[ (Z_{j'})_{j' \in J' \setminus J''}  \text{ is not $(8d L_k^{- \gamma})$-balanced} \big] \leq c \exp\{ -c L_k^{d-\gamma} \}$ and thus
\begin{equation}
  \label{e:PoissonRing}
  \mathbb{Q}' \Big[ (X^j)_{j \in J_1}  \text{ is not $(8d L_k^{- \gamma})$-balanced} \Big] \overset{\eqref{e:first_couple}} \leq c \exp\{ -c L_k^{1/2} \}.
\end{equation}

Using the above, we will use Lemma~\ref{l:couple_rest} to couple the remaining particles (with indices in $J_1$). For $k > c$ we know that $8d L_k^{-\gamma} < L_k^{-2\gamma/3}$, so that we are in a good position to use Lemma~\ref{l:couple_rest}. On the event in \eqref{e:PoissonRing} this lemma provides us with a coupling $\mathbb{Q}''$ between $\otimes_{j \in J_1} P_{X^j_{t_1}}$ and a $(q-1)$-sieved law $\sum_{j'' \in J''} \delta_{Y_{j''}}$ with density $L_k^{-\gamma/3}$, in such a way that
\begin{equation}
  \label{e:finish_rest}
  \mathbb{Q}''\Big[ \sum_{j \in J_1} \delta_{X^j_{T^j}} \not \leq \sum_{j'' \in J''} \delta_{Y_{j''}} \Big] \leq c \exp \{ - c L_k^{0.08} \}.
\end{equation}

Observe that we need the sprinkling $L_k^{-\gamma/3}$ that we used in defining $Y_{j''}$'s to be compatible with our choice of densities \eqref{e:rhokr}. More precisely, we need
\begin{equation}
  \label{e:sprinkling_large}
  L_k^{-\gamma/3} \leq \frac{\zeta^{q-1}_k - \zeta^q_k}{2} = \frac{\zeta_0}{8(k+1)^2},
\end{equation}
Which is precisely the hypothesis on $\zeta_0$ in the statement of the lemma. The above condition is plays a central in this article, see Remark~\ref{r:pho_large} for more details.

Using \eqref{e:sprinkling_large}, we can define $\mathbb{Q}$, combining $\mathbb{Q}'$ with $\mathbb{Q}''$, in a way that
\begin{equation*}
  \begin{split}
    \mathbb{Q} \Big[ \sum_{j \in J} \delta_{X^j_{T_j}} \leq & \sum_{j'
      \in J'} \delta_{Y_{j'}}, \text{ and no $X^j$ hits $\partial
      C_m^{q-1}$ before $T_j$}
    \Big]\\
    & \geq 1 - c \; \exp \big\{ c L_k^{1-\gamma/3} \big\} - L_k^d \exp \Big\{ - c L_k^{1/2} \Big\} - c \exp \big\{ - c L_k^{0.08} \big\},
  \end{split}
\end{equation*}
where above we used Lemma~\ref{l:fall}, \eqref{e:first_couple}, \eqref{e:PoissonRing} and \eqref{e:finish_rest}. This finishes the proof of the lemma.
\end{proof}

We now give a proof of Lemma~\ref{l:couple_rest}, which was used in the proof of the Sieving Lemma~\ref{l:couple2}.

Definition \eqref{e:T_j} allows us to split the indices of particles $J_0$ into disjoint sets
\begin{equation}
  \label{e:J_n}
  F_s = \{ j \in J_0; T_{j} = t_s \}, \text{ for $s \geq 1$},
\end{equation}
corresponding to the particles that have finished sieving at time $t_s$. We also define the indices of particles that are not yet sieved at stage $s$, that is $J_0 = J$ and $J_s = J_0 \setminus \cup_{s' \leq s} F_{s'}$, for $s \geq 1$.

Lemma~\ref{l:couple} would have been enough to couple the particles at time $t_1$ with a Poisson point processes on the entire lattice. The challenge now is to make sure that the dominating Poisson point process is restricted to $B$, the so-called ``sieving''. For this we will now apply Lemma~\ref{l:couple} repeatedly $S = \lfloor L_k^{\gamma/3}/2 \rfloor$ times.

In the following proof, we let $S = \lfloor L_k^{\gamma/3}/2 \rfloor$ as above.
\begin{proof}[Proof of Lemma~\ref{l:couple_rest}]
Define, for each $s = 1, \dots, S$, an independent Poisson point process $(Z^{j'}_s)_{j' \in J'_s}$, with density $2 L_k^{-2\gamma/3}$. Clearly, if we sum these Poisson point process, we obtain another Poisson point process with density $2 L_k^{-2\gamma/3} S$. Comparing this intensity measure with that of $(Y_{j'})_{j' \in J'}$, we see that
\begin{display}
  \label{e:sum_Poisson}
  $\1{B} \cdot \sum_{s \leq S} \sum_{j' \in J'_s} \cdot \delta_{Z^{j'}_s}$ is dominated by $\sum_{j'}
  \delta_{Y_{j'}}$.
\end{display}

It is therefore clear that, in order to prove the lemma, it is enough to construct a coupling $\mathbb{Q}$ satisfying
\begin{equation}
  \label{e:XJs_Zs}
  \mathbb{Q} \Big[ \sum_{j \in F_s} \delta_{X^j_{T_j}} \leq \1{B} \cdot \sum_{j' \in J'_s} \delta_{Z^{j'}_s} \text{, for every $s \leq S$} \Big] \geq 1 - c \exp \{ -c L_k^{\gamma/3} \}.
\end{equation}
For the construction of $\mathbb{Q}$ and the proof of \eqref{e:XJs_Zs} we make repeated use of Lemma~\ref{l:couple}.

\newconstant{c:sieve1}
Using the fact that $(x_j)_{j \in J_0}$ is $(L_k^{-2\gamma/3})$-balanced, we can employ Lemma~\ref{l:couple} to obtain a coupling $\mathbb{Q}^1$ between $\otimes_{j \in J_0} P_{x_j}$ and $(Z^{j'}_1)_{j' \in J'_1}$, such that
\begin{equation}
  \label{e:Q1_good}
  \begin{split}
    \mathbb{Q}^1 & \Big[ \sum_{j \in J_0} \delta_{X^j_{t_1}} \leq \sum_{j' \in J'_{1}} \delta_{Z^{j'}_{1}} \Big] \overset{\eqref{e:no_exit}}\geq \mathbb{Q}^1 \Big[ \1{C^{q}_m} \cdot \sum_{j \in J_0} \delta_{X^j_{t_1}} \leq \sum_{j' \in J'_{1}} \delta_{Z^{j'}_{1}} \Big] - \useconstant{c:shake}j \exp\{ - c L_k^{0.08}\}\\
    & \geq 1 - |C^{q}_m| \; \exp \Big \{ - L_k^{d - 2\gamma/3} + c L_k^{d - 2\gamma/3 + 1 - 1.01} \Big \} - \useconstant{c:shake} |C^{q}_m| \exp\{ - c L_k^{0.08}\}\\
    & \geq 1 - \useconstant{c:sieve1} \; \exp \{ - c L_k^{0.08} \}.
  \end{split}
\end{equation}
Note that if we restrict the above sum over $j$'s such that $X^j_{t_1} \in B$, we obtain
\begin{equation}
  \label{e:Q1_bal}
  \mathbb{Q}^1 \Big[ \sum_{j \in F_1} \delta_{X^j_{T^j}} \leq \1{B} \cdot \sum_{j' \in J'_{1}} \delta_{Z^{j'}_{1}} \Big] \geq 1 - \useconstant{c:sieve1} \; \exp \{ - c L_k^{0.08} \},
\end{equation}
which already resembles \eqref{e:XJs_Zs}.

We are now left with the particles $(X^j_{t_1})_{j \in J_1}$, which have not been sieved at this first stage, recall that $J_1 = J_0 \setminus F_1$. We could restart the coupling from them using Lemma~\ref{l:couple} again, as long as we are on the event
\begin{equation}
  \label{e:A1}
  A_{1} := \big[\text{$(X^j_{t_{1}})_{j \in J_{1}}$ is $(L_k^{-2\gamma/3})$-balanced}\big].
\end{equation}
We expect this event to have high probability, since most particles will not fall into $C^0_{m'} \setminus C^2_{m'}$, given that
\begin{equation}
  \label{e:C2_large}
  \frac{|C^0_{m'} \setminus C^2_{m'}|}{|C^0_{m'}|} \leq 1/4, \text{ for $k > c$.}
\end{equation}

Let us now recall the whole strategy of the proof. We are constructing the coupling $\mathbb{Q}$ and for this we have first built $\mathbb{Q}^1$. In case $\mathbb{Q}^1$ fails, i.e. on the complement of the event in \eqref{e:Q1_bal}, we declare that $\mathbb{Q}$ also failed (in case of failure, we let $(X^j_t)$ and $(Y_{j'})$ be independent under $\mathbb{Q}$).

\newconstant{c:sieve2}
On the complement of $A_1$ we will also assume that $\mathbb{Q}$ failed. However, this is improbable, since
\begin{equation}
  \label{e:balanced_1}
  \begin{array}{e}
    \mathbb{Q}^1 [A_{1}^c] & \overset{\eqref{e:Q1_good}} \leq & \mathbb{Q}^1 \Big[ (Z^{j'}_1)_{\{j' \in J'_1; Z^{j'}_1 \not \in B\}} \text{ is not $(L_k^{-2\gamma/3})$-balanced} \Big] - \useconstant{c:sieve1} \exp\{ - c L_k^{0.08}\}\\
    & \overset{\eqref{e:C2_large}}\leq & \Big( \frac{L_{k+1}}{L_k} \Big)^d P \big[\text{Poisson$(L_k^{d-2\gamma/3}/2) \geq L_k^{d-2\gamma/3}$} \big] - \useconstant{c:sieve1} \exp\{ - c L_k^{0.08}\}\\
    & \leq & \useconstant{c:sieve2} \; \exp \{ - c L_k^{0.08}\}.
  \end{array}
\end{equation}

On the event $A_1$ we can proceed as above, using Lemma~\ref{l:couple} to construct $\mathbb{Q}^2$. Proceeding by induction one finally obtains, for every $s \leq S$,
\begin{align*}
  & \mathbb{Q}^s \big[{\textstyle \sum_{j \in F_s} \delta_{X^j_{T^j}} \leq \1{B} \cdot \sum_{j' \in J'_{s}} \delta_{Z^{j'}_{s}}} \big] \geq 1 - \useconstant{c:sieve1} \; \exp \{ - c L_k^{0.08} \},\\
  & A_{s} := \big[\text{$(X^j_{t_{s}})_{j \in J_{s}}$ is $(L_k^{-2\gamma/3})$-balanced}\big] \text{ and}\\
  & \mathbb{Q}^s [A_{s}^c] \geq 1 - \useconstant{c:sieve2} \; \exp \{ - c L_k^{0.08}\}.
\end{align*}
We thus construct the coupling $\mathbb{Q}$ by iterating $\mathbb{Q}^s$ for $s = 1, \dots, S$. More precisely, let $\mathbb{Q}$ be defined as follows:
\begin{itemize}
  \item if $A_s$ does not occur for some $s \leq S$ or $J_{S+1}$ is non-empty, the coupling failed, and in this case the random walks and the sieved law are independent under $\mathbb{Q}$,
  \item if $A_s$ holds for every $s \leq S$ and $J_{S+1} = \varnothing$, , $\mathbb{Q}$ is given by all the couplings $\mathbb{Q}^s$, conditionally independent given $(J_s)_{s \leq S}$ and the position of all particles $(X^j_{t_s})_{s \leq S, j \in J_s}$.
\end{itemize}
It is simple to verify that $\mathbb{Q}$ is a coupling between $\otimes_{j \in J_0} P_{x_j}$ and the Poisson point processes $(Z^j_s)_{s \leq S, j \in J'_s}$. Summing up the probabilities that $\mathbb{Q}$ failed we prove \eqref{e:XJs_Zs}, finishing the proof of Lemma~\ref{l:couple_rest}.
\end{proof}

\section{Other models}
\label{s:other_models}

In this section we comment on other models for which our techniques apply.
Although we have stated Theorems~\ref{t:absorption} and \ref{t:dd} for the process of Activated Random Walks, let us now show how the same proof can be used in other cases.

The first trivial observation is that if we show that the total activity of a certain model is stochastically dominated by that of the Activated Random Walkers, then this model will also fall in the scope of this work.
However, this simple argument is very limited.
For instance, it does not apply to the Stochastic Sandpile Model as we observe below.

\begin{remark}
  \label{r:no_domination}
  The activity of the Stochastic Sandpile Model (SSM) is not dominated by the Activated Random Walks (ARW).
  This can be easily seen by starting with $\kappa$ particles (the site's capacity) at the origin.
  For such starting configuration, the total activity for the SSM will be at least $\kappa$ as the origin will topple all its particles to random neighbors after an exponential time.
  On the other hand, for the ARW, it could happen that the first $\kappa-1$ particles jump and sleep and then the remaining particle at the origin immediately goes to sleep, resulting in total activity $\kappa - 1$.
\end{remark}

Here, we show how to extend our main results for other models.
In order to keep the exposition simple, we focus on the Stochastic Sandpile Model, but our intention is to illustrate what are the main ingredients one needs for such an extensions.
Let us first observe what are the main properties of the Activated Random Walks model that were used through the previous proofs.
We used the commutativity and monotonicity provided by Theorems~\ref{t:commutative} and \ref{t:monotonicity}, and several times the ``turning off'' of the dynamics provided by Corollary~\ref{c:offsleep}.

The proof of the above properties can often be tedious for different models, therefore we will refer to the excellent work in \cite{2013arXiv1309.3445B}, where the authors provide a collection of results that hold for a large class of models.
Following their notation, we sometimes call the particles \emph{messages}.

Let us first put some definitions in place.
Each site $x \in \mathbb{Z}^d$ will be assigned an infinite sequence of instructions $(y^x_i)_{i \geq 1}$, where $y^x_i \in \{\pm e_j; j = 1, \dots, d\}$.
Later, the $y^x_i$'s will be chosen randomly, but for now we can pick an arbitrary sequence.

The site $x$ will react to the arrival of two types of messages, corresponding to \emph{ordinary} and \emph{activation} particles.
The number of particles sent by $x$ to its neighbors will be determined by the function
\begin{equation}
f(q,r) = \min \big\{ q, \max\{q - (q \text{ mod } \kappa), r \} \big\},
\end{equation}
see Figure~\ref{m:fqr}. Note that $f(q,r)$ in non-decreasing in both arguments.

Informally speaking, after a site $x$ receives $q$ ordinary particles and $r$ activation particles, the site will launch $f(q,r)$ ordinary particles to its neighbors.
The directions to which these particles will be sent will be determined by the unitary vectors $y^x_i$.

\begin{figure}[ht]
  \centering
  \begin{tabular}{ | l | l | l | l | l | l | l | l | l }
    \vdots & \vdots & \vdots & \vdots & \vdots & \vdots & \vdots & \vdots &  \\ \hline
    0 & 1 & 2 & 3 & 4 & 5 & 6 & 7 & \dots \\ \hline
    0 & 1 & 2 & 3 & 4 & 5 & 6 & 6 & \dots \\ \hline
    0 & 1 & 2 & 3 & 4 & 5 & 6 & 6 & \dots \\ \hline
    0 & 1 & 2 & 3 & 4 & 4 & 6 & 6 & \dots \\ \hline
    0 & 1 & 2 & 3 & 3 & 3 & 6 & 6 & \dots \\ \hline
    0 & 1 & 2 & 3 & 3 & 3 & 6 & 6 & \dots \\ \hline
    0 & 1 & 1 & 3 & 3 & 3 & 6 & 6 & \dots \\ \hline
    0 & 0 & 0 & 3 & 3 & 3 & 6 & 6 & \dots \\ \hline
  \end{tabular}
  \caption{Some values of the function $f(q,r)$ when $\kappa = 3$. Here $q$ increases to the right and $r$ to the top, both starting from zero.}
\label{m:fqr}
\end{figure}

Let us now make the above description precise.
We define for $x \in \mathbb{Z}^d$,
\begin{enumerate}
\item a dictionary of messages $A_x = \{x^o, x^a\}$, corresponding to the arrival of ordinary and activation particles and
\item a state space $Q_x = \{0,1,\dots\}^2$, which stores how many particles of each type have arrived at $x$.
\end{enumerate}
We introduce the update rules $T_x: A_x \times Q_x \to Q_x$ as
\begin{equation}
  \begin{split}
    T_{(x)}(x^o, (q,r)) & = (q + 1, r)\\
    T_{(x)}(x^a, (q,r)) & = (q, r + 1),
  \end{split}
\end{equation}
which simply increases the counter of the corresponding particle type.
After updating its counter, the site $x$ will send some particles to its neighbors (only ordinary particles are sent).
The number of particles emanating from $x$ after it receives a message will be given by the increment on $f(q,r)$, that is, $f(T_{(x)}(\cdot, (q,r))) - f(q,r)$ (recall that $f$ is monotone).
And these particles will jump to directions given by the instructions $y^x_j$, where $j$ varies over fresh indices.
More precisely, for $|y| = 1$, let
\begin{equation}
T_{(x, x + y)}(\cdot, (q,r)) = (x + y)_o^l, \text{ where $l = \#\big\{f(q,r) < j \leq f\big(T_{(x)}(\cdot, (q,r))\big); y^x_j = y \big\}$}.
\end{equation}
In the above definition, one can replace $\cdot$ by either $x^o$ or $x^a$.

In order to apply the results in \cite{2013arXiv1309.3445B}, we need to prove that the rules $T_{(x)}$ and $T_{(x,x+y)}$ above are Abelian.
Indeed, independently of the order in which messages arrive, the total number of particles sent by $x$ is simply $f$ evaluated in the total number of each message type.

We can now apply Theorem~4.8 of \cite{2013arXiv1309.3445B} to conclude that the final configuration of particles as well as the total occupation field $(J_x)_{x \in \mathbb{Z}^d}$ (that is the total number of messages processed by each site) does not depend on the order in which each processor acts.

It is clear that if there are no activations messages in the system, the process will be determined simply by $f(q,0) = q - (q \text{ mod } \kappa)$, which sends $\kappa$ particles whenever $q$ reaches $\{0, \kappa, 2 \kappa, \dots\}$.
This is nothing more than the original Stochastic Sandpile Model.

Applying Lemma~4.2 of \cite{2013arXiv1309.3445B}, we conclude that adding more messages to the system (of type either $x^o$ or $x^a$) can only increase its final occupation field $(J_x)_{x \in \mathbb{Z}^d}$.

We have seen that our representation of the process provides us with analogous results to Theorems~\ref{t:commutative} and \ref{t:monotonicity}.
Let us finish by establishing a result similar to Corollary~\ref{c:offsleep}.

\begin{lemma}
  \label{c:offsleep2}
  Fix a finite collection of particles $(x_i)_{i \in I}$ and a stopping time $T_i$ for each of them. Then, for any event $A$ which is increasing in the final accumulated activity $(J_x)$,
  \begin{equation}
    \label{e:offsleep2}
    \sup \mathbb{P}_{(x_i)_{i \in I}} (A) \leq E_{(x_i)_{i \in I}} \big( \sup \mathbb{P}_{(X^i_{T_i})_{i \in I}} (A) \big),
  \end{equation}
  where the suprema are taken over all starting configurations where the state of each site $x$ is on the diagonal. As previously, the expectation above is taken with respect to an independent collection of simple random walks.
\end{lemma}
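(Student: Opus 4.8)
The plan is to mimic the proof of Corollary~\ref{c:offsleep}, replacing the operation of ``removing $\mathfrak s$-instructions'' (which has no literal counterpart for the SSM) by that of ``flooding sites with activation messages''. I would work throughout with the message-passing representation set up above, with i.i.d.\ instruction stacks $(y^x_i)$, and recall its two structural features: by Theorem~4.8 of \cite{2013arXiv1309.3445B} the final occupation field $(J_x)$ does not depend on the order in which processors act, and by Lemma~4.2 of \cite{2013arXiv1309.3445B} adding extra messages to the system (of either type $x^o$ or $x^a$) can only increase $(J_x)$. The supremum appearing on both sides of \eqref{e:offsleep2} is there precisely so that the estimate can be iterated, exactly as Corollary~\ref{c:offsleep} is iterated in Section~\ref{s:infinite}.

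First I would sample, independently of the instruction stacks, a collection of simple random walks $(X^i)_{i\in I}$ issued from $(x_i)_{i\in I}$ together with the stopping times $T_i$ (which depend only on $X^i$). Given these paths, run a \emph{modified} dynamics: during an initial phase each particle $i$ is forced to perform exactly the jumps prescribed by $X^i$ up to time $T_i$, ignoring the stabilisation rule at the sites it crosses, and after $T_i$ the ordinary SSM rule resumes everywhere. Since the jump directions are read from the (uniform, i.i.d.) site stacks, a particle moving in this way is genuinely performing the walk $X^i$; the only thing we must override is that a transiting particle should not trigger a toppling of a site that happens to be nearly full, which is achieved by feeding each traversed site sufficiently many activation messages so that $f(q,r)=q$ there, i.e.\ the site relays rather than holds. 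Thus the modified dynamics is just the SSM started from $(x_i)_{i\in I}$ together with a family of additional activation messages; by Lemma~4.2 of \cite{2013arXiv1309.3445B}, and using the Abelian property of Theorem~4.8 to process these forced moves first, its occupation field $J^{\mathrm{mod}}$ dominates coordinate-wise the occupation field $J^{\mathrm{true}}$ of the genuine SSM. As $A$ is increasing in $(J_x)$, this gives $\mathbb P_{(x_i)}(A)\le \mathbb P^{\mathrm{mod}}(A)$ for every initial configuration admissible on the left of \eqref{e:offsleep2}, hence also after the supremum.

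It then remains to identify the conditional law of the modified dynamics given $(X^i_s)_{s\le T_i,\ i\in I}$. Once the forced phase is over, the $|I|$ particles occupy the positions $(X^i_{T_i})_{i\in I}$ (with possible coincidences), and every site that has only relayed messages has been left holding zero particles, i.e.\ in a state of the diagonal type quantified over in the statement, the precise value depending on the sampled paths. Consequently the modified dynamics continued from time $T_i$ is an SSM started from a configuration with particles at $(X^i_{T_i})_{i\in I}$ and each site on the diagonal; bounding its probability of $A$ by $\sup \mathbb P_{(X^i_{T_i})_{i\in I}}(A)$ and integrating over the independent walks produces \eqref{e:offsleep2}.

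The delicate point is the second paragraph. One must check carefully that ``forcing particle $i$ through a site'' is realised by an \emph{addition} of messages (so that Lemma~4.2 of \cite{2013arXiv1309.3445B} is applied in the correct direction) and keeps us within the Abelian class covered by Theorem~4.8, and that the residual site states are indeed of the diagonal type — equivalently, that $J^{\mathrm{mod}}\ge J^{\mathrm{true}}$ under the natural coupling in which every message processed by the true dynamics is also processed by the modified one. The measurability subtlety, that $T_i$ is a stopping time of $X^i$ alone, is handled as in the ARW case by revealing the walks first and only then performing the (now deterministic) message injection. The remaining verifications are bookkeeping in the $(q,r)$-state space and use the monotonicity of $f(q,r)$ in both arguments noted right after its definition.
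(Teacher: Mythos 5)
Your proposal is correct and follows essentially the same route as the paper: the paper also pairs each ordinary message with an activation message so that every visited site stays on the diagonal and relays (using $f(q,q)=q$), thereby realizing the forced motion as a simple random walk until the stopping times, and then invokes the Abelian property and the monotonicity of $(J_x)$ under message addition (Theorem~4.8 and Lemma~4.2 of \cite{2013arXiv1309.3445B}) to conclude. Your extra care about measurability of $T_i$ and the bookkeeping of diagonal states is exactly the (implicit) content of the paper's argument.
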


\begin{proof}
  Suppose that the system starts with a finite collection $(x^o_i)_{i \in I}$ of ordinary messages, endowed with stopping times $T_i$, that may depend on $i$.
  Moreover, each site $x \in \mathbb{Z}^d$ has state $(q_x,q_x)$ on the diagonal.
  Throughout the proof, we will manage to keep the state of every site on the diagonal, so that particles never accumulate on sites.

  Suppose that at some given time a site $x$ has at least one ordinary message $x^o$ waiting to be processed and we want this message to move to some neighboring site at random.
  What we do is send this ordinary message to be processed at $x$, together with an activation message $x^a$, so that $Q_x$ stays in the diagonal.
  Then, since $f(q,q) = q$, we are guaranteed to have the $x^o$ message jump to some random neighbor.
  After we have repeated this procedure until the stopping time $T_i$ of each ordinary message, we can stop feeding the system with activation messages, so that we are now back to the original dynamics ruled by $\mathbb{P}$ (still the state of each site remains on the diagonal).

  Since we only added activation messages to the system, our procedure can only increase the probability of the event $A$, finishing the proof of the lemma.
\end{proof}

To repeat the proof of absorption, one should only observe that Lemma~\ref{l:tiny_fraction} can be proven the same way for this new particle system (even with the supremum appearing in Lemma~\ref{c:offsleep2}).
Another important change is in the definition of $p_k(\zeta)$ in \eqref{e:pk}, that should include this supremum as well.
All other steps of the proof should be identical to what has been presented.

\appendix
\section*{Appendix}
\label{s:appendix}

\renewcommand{\theequation}{A.\arabic{equation}}
\renewcommand{\thetheorem}{A.\arabic{theorem}}
\setcounter{theorem}{0}

In this section we quote some results from \cite{PT12}, concerning how to simulate random processes using Poisson processes; this result will be used later in order to show a mixing-type result for a collection of independent random walkers.

Let $\Sigma$ be a locally compact and Polish metric space. Suppose also that we are given a measure space $(\Sigma, \mathcal{B}, \mu)$ where $\mathcal{B}$ is the Borel $\sigma$-algebra on $\Sigma$ and $\mu$ is Radon, i.e. every compact set has finite $\mu$-measure.

The above setup is standard for the construction of a Poisson point process on $\Sigma$. For this, we also consider the space of Radon point measures on $\Sigma \times \mathbb{R}_+$
\begin{equation}
 \label{e:M}
 M = \Big\{\m = \sum_{i \geq 1} \delta_{(z_i, v_i)}; z_i \in \Sigma, v_i \in \mathbb{R}_+ \text{ and } \m(K) < \infty \text{ for all $K \subseteq \Sigma$ compact} \Big\}.
\end{equation}
One can now canonically construct a Poisson point process $\m$ on the space $(M, \mathcal{M}, \mathbb{Q})$ with intensity given by $\mu \otimes \d v$, where $\d v$ is the Lebesgue measure on $\mathbb{R}_+$. For more details on this construction, see for instance~\cite{R08}, Proposition~3.6 on p.130.

The result below provides us with a way to simulate a random element of $\Sigma$ using the Poisson point process $\m$. Although this result is very intuitive, we provide here its proof for the sake of completeness and the reader's convenience.

\begin{proposition}
\label{p:simulate}
Let $g:\Sigma \to \mathbb{R}_+$ be a measurable function with $\int g(z) \mu(\d z) = 1$. For $\m = \sum_{i \geq 1} \delta_{(z_i, v_i)} \in M$, we define
\begin{equation}
 \xi = \inf \{ t \geq 0; \text{ there exists $i \geq 1$ such that $t g(z_i) \geq v_i$}\},
\end{equation}
see Figure~\ref{f:xi}. Then under the law $\mathbb{Q}$ of the Poisson point process~$\m$,
\begin{enumerate}
 \item \label{e:io} there exists a.s.\
 a single value $\hat{\imath} \geq 1$ such that
 $\xi g(z_{\hat{\imath}}) = v_{\hat{\imath}}$,
 \item \label{e:xiio} $(z_{\hat{\imath}}, \xi)$ is distributed as $g(z) \mu(dz)
\otimes \Exp(1)$,
 \item \label{e:mprime} $\m' := \sum_{i \neq \hat{\imath}} \delta_{(z_i,v_i -
\xi g(z_i))}$ has the same distribution as~$\m$ and is independent
of $(\xi, \hat{\imath})$.
\end{enumerate}
\end{proposition}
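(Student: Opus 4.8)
The plan is to deduce everything from elementary facts about a rate-one Poisson process on $\mathbb{R}_+$, after a change of variables that ``straightens'' the graphs $t\mapsto t\,g(z)$. First I would discard the part of $\m$ lying over $\{g=0\}$: if $g(z_i)=0$ then $t\,g(z_i)\geq v_i$ fails for every $t\geq 0$ (a.s.\ no point of $\m$ has $v$-coordinate equal to $0$), so these points play no role in $\xi$ and reappear unchanged in $\m'$; by the restriction property of Poisson processes, $\m|_{\{g=0\}\times\mathbb{R}_+}$ is an independent Poisson process that we simply carry along. On $\{g>0\}\times\mathbb{R}_+$ consider the measurable bijection $\Phi(z,v)=(z,v/g(z))$, with inverse $(z,w)\mapsto(z,w\,g(z))$. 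By the mapping theorem, $\widetilde{\m}:=\Phi(\m)=\sum_i\delta_{(z_i,w_i)}$ with $w_i:=v_i/g(z_i)$ is a Poisson point process on $\{g>0\}\times\mathbb{R}_+$ with intensity the pushforward of $\mu\otimes\d v$, namely $(g\,\d\mu)\otimes\d w$; note that $g\,\d\mu$ is a probability measure since $\int g\,\d\mu=1$. In these coordinates the defining inequality reads $t\geq v_i/g(z_i)=w_i$, so $\xi=\inf\{t\geq 0;\ \exists i,\ t\geq w_i\}=\min_i w_i$, the index $\hat\imath$ is the (a.s.\ unique) minimiser, and since $v_i-\xi\,g(z_i)=g(z_i)(w_i-\xi)$ we get $\m'=\Phi^{-1}\big(\sum_{i\neq\hat\imath}\delta_{(z_i,\,w_i-\xi)}\big)$ together with the untouched $\{g=0\}$ part.

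Next I would invoke the standard marking/colouring properties of a Poisson process whose intensity is a probability measure times Lebesgue measure: $\widetilde{\m}$ has the law of $\sum_{j\geq 1}\delta_{(\zeta_j,s_j)}$, where $0<s_1<s_2<\cdots$ are the points of a rate-one Poisson process on $\mathbb{R}_+$ and $(\zeta_j)_{j\geq1}$ is i.i.d.\ with law $g\,\d\mu$, independent of $(s_j)_j$; this is legitimate because $(g\,\d\mu)\otimes\d w$ assigns finite mass $T$ to every slab $\Sigma\times[0,T]$, so the $\mathbb{R}_+$-marginal is a non-explosive rate-one process (in particular $\mathbb{Q}[\xi>t]=e^{-t}$, the set $\{(z,v):v\leq t\,g(z)\}$ having $(\mu\otimes\d v)$-measure $t$). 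In this representation $\xi=s_1$, $z_{\hat\imath}=\zeta_1$, and $s_1<s_2$ almost surely; transporting the last fact back through $\Phi^{-1}$ gives the uniqueness assertion~\eqref{e:io}. Since $s_1\sim\Exp(1)$ and $\zeta_1\sim g\,\d\mu$ are independent, assertion~\eqref{e:xiio} follows. Finally, by the memoryless (regenerative) property of the one-dimensional Poisson process, $(s_{j+1}-s_1)_{j\geq1}$ is again a rate-one Poisson process independent of $s_1$; combined with the independence of $(\zeta_j)_{j\geq1}$ from $(s_j)_j$ and the fact that $(\zeta_{j+1})_{j\geq1}$ is i.i.d.\ $g\,\d\mu$ and independent of $(s_1,\zeta_1,(s_{j+1}-s_1)_{j\geq1})$, the point process $\sum_{j\geq1}\delta_{(\zeta_{j+1},\,s_{j+1}-s_1)}$ has the same law as $\widetilde{\m}$ and is independent of $(s_1,\zeta_1)=(\xi,z_{\hat\imath})$. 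Applying $\Phi^{-1}$ and re-attaching the independent $\{g=0\}$ part recovers $\m'$, yielding $\m'\overset{d}{=}\m$ and its independence of $(\xi,\hat\imath)$, the label $\hat\imath$ being just a name for the point $(\zeta_1,s_1)$; this is assertion~\eqref{e:mprime}.

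The statement is essentially a picture (Figure~\ref{f:xi}), so there is no deep obstacle; the work lies in the measure-theoretic bookkeeping. The points requiring care are: checking that $\Phi$ is a measurable bijection of $\{g>0\}\times\mathbb{R}_+$ with the claimed pushforward intensity and that the $\{g=0\}$-part genuinely decouples; citing and correctly applying the marking representation together with the memoryless property of the Poisson process to extract the independence in~\eqref{e:mprime}; and the routine measurability of $\xi$, $\hat\imath$ and of the map $\m\mapsto\m'$. One should also record that $g(z_{\hat\imath})>0$ automatically, since $w_{\hat\imath}=\xi<\infty$.
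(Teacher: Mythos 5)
Your proof is correct, but it takes a genuinely different route from the paper. The paper works directly in the original coordinates: it introduces the variables $\xi^A$ for measurable $A\subset\Sigma$, uses that these are independent exponentials with parameters $\int_A g\,\d\mu$ for disjoint sets (plus separability of $\Sigma$) to get claims~(1) and~(2), and for claim~(3) it invokes the strong Markov property of Poisson processes at the stopping set $\{(z,v);\,v\leq \xi g(z)\}$ (Theorem~4 of \cite{Ros82}) followed by a mapping argument that shifts the remaining points down by $\xi g(z)$. You instead straighten the picture first, via $\Phi(z,v)=(z,v/g(z))$ on $\{g>0\}$, so that the intensity becomes $(g\,\d\mu)\otimes\d w$, and then everything reduces to elementary one-dimensional facts: the time-marginal is a rate-one Poisson process, the marks are i.i.d.\ with law $g\,\d\mu$ and independent of the times, $\xi$ is the first arrival, and claim~(3) follows from the regeneration of the one-dimensional process at its first point together with the independence of the marks; the $\{g=0\}$ part decouples by the restriction theorem, as you note. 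Your version buys simplicity: it avoids the stopping-set theorem entirely and makes the $\Exp(1)$ law and the independence in \eqref{e:mprime} transparent. What the paper's approach buys is robustness for what comes next: in Proposition~\ref{p:poisson} the construction is iterated with \emph{different} densities $g_j$, and then no single change of variables straightens all graphs simultaneously, whereas the stopping-set/strong-Markov argument (which only sees the region below the current soft local time) iterates verbatim. Minor points you handle correctly and should keep: a.s.\ no point has $v_i=0$, so $g(z_{\hat\imath})>0$ automatically and the $\{g=0\}$ points never realize the infimum; and the independence statement really concerns $(\xi,z_{\hat\imath})$, the index $\hat\imath$ being a non-canonical label, a convention the paper also adopts tacitly.
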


\begin{proof}
Let us first define, for any measurable $A \subset \Sigma$, the random variable
\begin{equation}
 \xi^A = \inf \{ t \geq 0; \text{ there exists $i \geq 1$ such
  that $t \mathbbm{1}_A g(z_i) \geq v_i$}\}.
\end{equation}
Elementary properties of Poisson point processes
(see for instance~(a) and~(b) in~\cite{R08}, page~130) yield that
\begin{display}
\label{e:xiA}
 $\xi^A$ is exponentially distributed (with parameter $\int_A g(z) \mu(dz)$) and if $A$ and $B$ are disjoint, $\xi^A$ and $\xi^B$ are
independent.
\end{display}

Property~(\ref{e:io}) now follows from \eqref{e:xiA}, using that~$\Sigma$ is separable and the fact that two independent exponential random variables are almost surely distinct. Observe also that
\begin{equation}
 \mathbb{Q}[\xi \geq \alpha, z_{\hat{\imath}} \in A] = \mathbb{Q}[\xi^{\Sigma \setminus A} > \xi^A \geq \alpha].
\end{equation}
Thus, using \eqref{e:xiA} we can prove property (\ref{e:xiio}) using simple properties of the minimum of independent exponential random variables.

Finally, let us establish property (\ref{e:mprime}).
We first claim that, given $\xi$, $\m'' := \sum_{i \neq \hat{\imath}} \delta_{(z_i,v_i)}$ is a Poisson point process, which is independent of $z_{\hat{\imath}}$ and, conditioned on $\xi$, has intensity measure $\mathbbm{1}_{\{v > \xi g(z)\}} \cdot \mu(\d z) \otimes \d v$.

This is a consequence of the Strong Markov property for Poisson point processes and the fact that $\{(z,v) \in \Sigma \times \mathbb{R}_+; v \leq \xi g(z)\}$ is a stopping set, see Theorem~4 of~\cite{Ros82}.

To finish the proof, we observe that, given $\xi$, $\m'$ is a mapping of $\m''$ (in the sense of Proposition~3.7 of \cite{R08}, p.134). This mapping pulls back the measure $\mathbbm{1}_{\{v > \xi g(z)\}} \cdot \mu(\d z) \otimes \d v$ to $\mu(\d z) \otimes \d v$. Noting that the latter distribution does not involve $\xi$, we conclude the proof of (\ref{e:mprime}) and therefore of the lemma.
\end{proof}

\begin{figure}
\centering \includegraphics[width = 0.65 \textwidth]{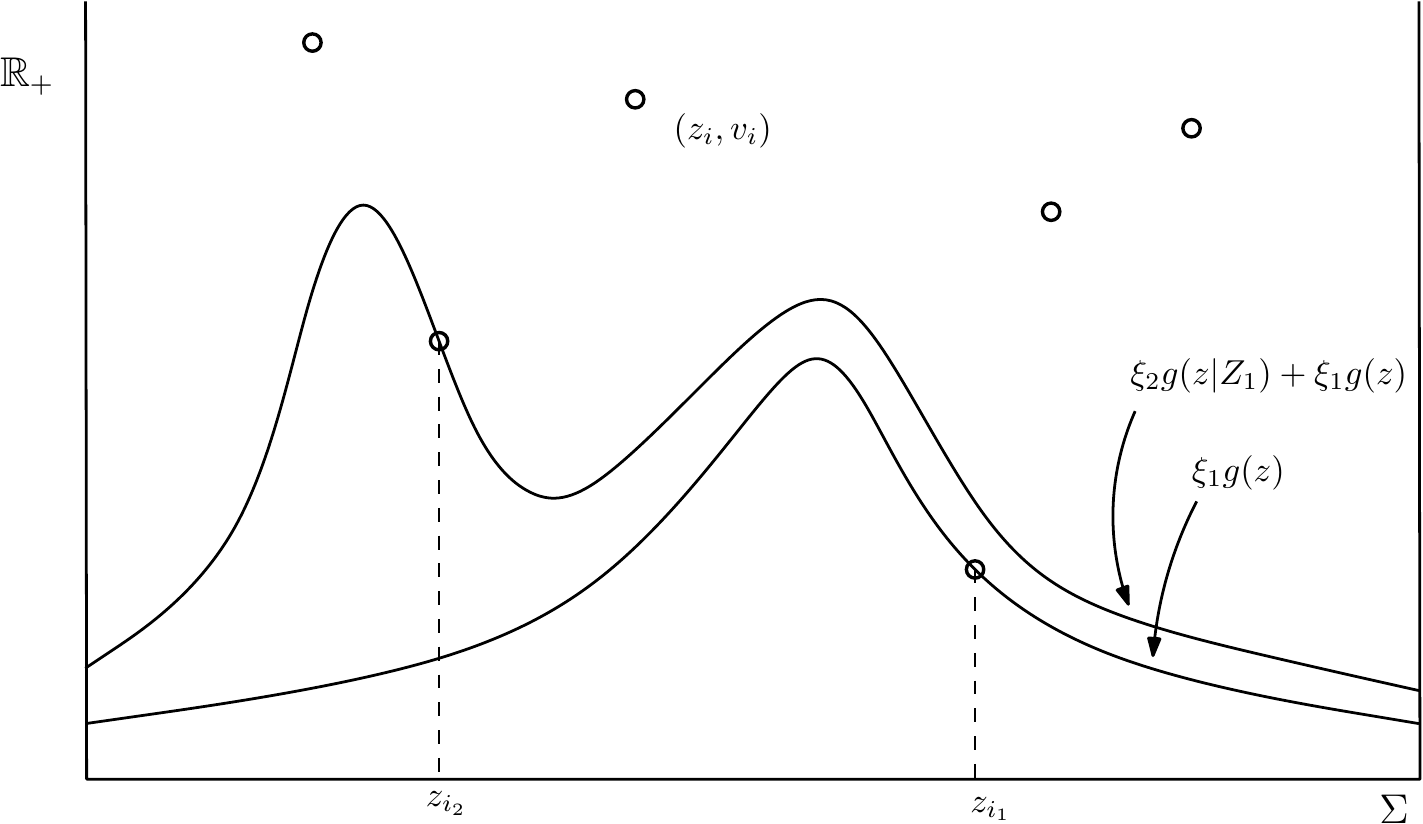}
  \caption{An example illustrating the definition of $\xi$ and $\hat{\imath}$ in Proposition~\ref{p:simulate}. More generally, $\xi_1$, $Z_1$ and $\xi_2$, $Z_2$ in \eqref{e:xis}}
  \label{f:xi}
\end{figure}

In Proposition~\ref{p:poisson} below, we use Lemma~\ref{p:simulate} in order to simulate a collection of independent random elements $Z_j$ of $\Sigma$ using the same Poisson point process $\m$ as above. Let us setup the required definitions.

Suppose that in some probability space $(M, \mathcal{M}, \mathcal{P})$ we are given an independent collection of (not necessarily identically distributed) random elements $(Z_j)_{j \geq 1}$ of $\Sigma$ such that
\begin{equation}
 \label{e:density}
 \text{for any given $j \geq 1$, the distribution of $Z_j$ is given by $g_j(z) \mu(dz)$.}
\end{equation}

In what follows, we are going to use a single Poisson point process $\m$ to simulate the above sequence $(Z_j)$.

In the same spirit of the definition of $\xi$ in Proposition~\ref{p:simulate}, we will now define what we call the \emph{soft local time}~$G$:
\begin{equation}
 \label{e:xis}
 \begin{split}
  & \xi_{1} = \inf \big\{ t \geq 0; \text{ there exists $i \geq 1$ such that $tg_1(z_i) \geq v_i$}\big\},\\
  & \quad G_{1}(z) = \xi_{1} \; g_1(z),\\
  & \qquad \quad \vdots\\
  & \xi_{k} =  \inf \big\{ t \geq 0; \text{ for at least $k$ values of $i$, $tg_j(z_i) + G_{k-1}(z_i) \geq v_i$}\big\},\\
  & \quad G_{k}(z) = \xi_{1} \; g_1(z) + \dots + \xi_{k} \; g_k(z),
 \end{split}
\end{equation}
see Figure~\ref{f:xi} for an illustration of this procedure.

Applying Proposition~\ref{p:simulate} repeatedly, we conclude by induction in~$J$ that

\begin{proposition}
 For an independent sequence $(Z_j)_{j \geq 1}$ as in \eqref{e:density} and with the definitions in \eqref{e:xis}, we have that
 \label{p:poisson}
 \begin{align}
  \label{e:ijk}
   & (\xi_{k})_{j = 1}^{J} \text{ are i.i.d. Exp$(1)$-random variables.}\\
   & \text{there is a.s. a unique $i_{J}$ such that $G_{J}(z_{i_{J}}) = v_{i_J}$}\\
   & (z_{i_{1}}, \dots, z_{i_{J}}) \overset d\sim (Z_1, \dots, Z_{J}) \text{ and}\\
   & \text{$\m' := \;\; \sum_{\mathclap{i \not \in \{i_{1}, \dots, i_{J}\}}} \;\; \delta_{(z_i, v_i - G_{J}(z_i))}$ is distributed as $\m$ and independent of the above.}
 \end{align}
\end{proposition}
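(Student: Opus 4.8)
The plan is to prove Proposition~\ref{p:poisson} by induction on $J$, peeling off one random element at a time with Proposition~\ref{p:simulate}. For $J=1$ there is nothing to do: Proposition~\ref{p:simulate} applied with $g=g_1$ gives $\xi_1\sim\Exp(1)$, the a.s.\ unique index $i_1$ with $\xi_1 g_1(z_{i_1})=v_{i_1}$, the fact that $z_{i_1}$ has the law $g_1(z)\mu(\d z)$ of $Z_1$, and the residual measure $\m^{(1)}:=\sum_{i\neq i_1}\delta_{(z_i,\,v_i-G_1(z_i))}$ (recall $G_1=\xi_1 g_1$), which is distributed as $\m$ and independent of $(\xi_1,i_1)$, hence of $z_{i_1}$ as well.

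For the inductive step, assume the four assertions hold at level $J-1$ and write $\m^{(J-1)}:=\sum_{i\notin\{i_1,\dots,i_{J-1}\}}\delta_{(z_i,\,v_i-G_{J-1}(z_i))}$; by hypothesis it has the same law as $\m$ and is independent of $\big((\xi_k)_{k\le J-1},(i_k)_{k\le J-1},(z_{i_k})_{k\le J-1}\big)$. I would now apply Proposition~\ref{p:simulate} to $\m^{(J-1)}$ with the normalized density $g_J$. The point requiring care is that the quantity this produces is exactly the $\xi_J$ of \eqref{e:xis}. Unwinding definitions, Proposition~\ref{p:simulate} on $\m^{(J-1)}$ returns $\inf\{t\ge 0:\ \exists\, i\notin\{i_1,\dots,i_{J-1}\},\ t g_J(z_i)\ge v_i-G_{J-1}(z_i)\}=\inf\{t\ge 0:\ \exists\, i\notin\{i_1,\dots,i_{J-1}\},\ t g_J(z_i)+G_{J-1}(z_i)\ge v_i\}$. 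Since each $\xi_k$ and each $g_k$ is nonnegative, $G_{J-1}\ge G_k$ pointwise for $k\le J-1$, and $G_k(z_{i_k})=v_{i_k}$, so the indices $i_1,\dots,i_{J-1}$ satisfy $t g_J(z_i)+G_{J-1}(z_i)\ge v_i$ for \emph{every} $t\ge 0$. Therefore ``$t g_J(z_i)+G_{J-1}(z_i)\ge v_i$ for at least $J$ values of $i$'' coincides with ``$t g_J(z_i)+G_{J-1}(z_i)\ge v_i$ for some $i\notin\{i_1,\dots,i_{J-1}\}$'', which is what Proposition~\ref{p:simulate} tracks on $\m^{(J-1)}$; thus the returned time is $\xi_J$, its selected index is the new $i_J$, and its residual measure is $\sum_{i\notin\{i_1,\dots,i_J\}}\delta_{(z_i,\,v_i-G_{J-1}(z_i)-\xi_J g_J(z_i))}=\sum_{i\notin\{i_1,\dots,i_J\}}\delta_{(z_i,\,v_i-G_J(z_i))}=:\m^{(J)}$.

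It then remains to collect the conclusions. Proposition~\ref{p:simulate} gives $\xi_J\sim\Exp(1)$, $(z_{i_J},\xi_J)\sim g_J(z)\mu(\d z)\otimes\Exp(1)$ (so $z_{i_J}\sim Z_J$), the a.s.\ uniqueness of $i_J$, and $\m^{(J)}$ distributed as $\m$ and independent of $(\xi_J,i_J,z_{i_J})$. Since $(\xi_J,i_J,z_{i_J})$ and $\m^{(J)}$ are all $\sigma(\m^{(J-1)})$-measurable and $\m^{(J-1)}$ is independent of the level-$(J-1)$ data, the new batch is independent of all previously constructed objects; combining this with the inductive hypothesis yields that $(\xi_k)_{k\le J}$ are i.i.d.\ $\Exp(1)$, the $i_k$ are a.s.\ well defined and distinct, $(z_{i_1},\dots,z_{i_J})\overset d\sim(Z_1,\dots,Z_J)$, and $\m^{(J)}$ has the law of $\m$ and is independent of all of the above, which is the statement at level $J$. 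The only real obstacle is the bookkeeping highlighted in the previous paragraph: checking that feeding the thinned process $\m^{(J-1)}$ into Proposition~\ref{p:simulate} reproduces precisely the soft local time $\xi_J$ of \eqref{e:xis} — this rests on the monotonicity $G_{J-1}\ge G_k$ and on the already-selected points being saturated ($G_k(z_{i_k})=v_{i_k}$) so that they contribute no new crossings — together with propagating independence of the residual from the \emph{entire} accumulated history rather than merely from the last step.
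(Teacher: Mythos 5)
Your proof is correct and follows exactly the route the paper intends: Proposition~\ref{p:poisson} is stated there with only the one-line justification ``applying Proposition~\ref{p:simulate} repeatedly, by induction in $J$,'' and your argument is precisely that induction, with the useful extra verification (saturation $G_k(z_{i_k})=v_{i_k}$ and monotonicity $G_{J-1}\geq G_k$) that feeding the thinned process into Proposition~\ref{p:simulate} reproduces the $\xi_J$ of \eqref{e:xis}. No discrepancy with the paper's approach; you have simply supplied the details it leaves implicit.
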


We would like to finish this section giving a flavor of how the above proposition can help with the proof of Lemma~\ref{l:couple}. The next corollary shows that performing the above construction for two collections (say $Z_k$ and ${Z'}_k$) of independent elements of $\Sigma$ while using the same Poisson point process as basis, can provide a powerful coupling between them. More precisely,

\begin{corollary}
 \label{c:couplesystem}
 Suppose we are given a family of densities $(g_j(\cdot))_{j = 1}^J$ and the corresponding $\xi_{j}$, $G_{j}$ and $i_{j}$, for $j = 1, \dots, J$, as in \eqref{e:xis} and \eqref{e:ijk}. Then, for any $\zeta > 0$,
 \begin{equation*}
  \mathbb{Q} \Big[ \sum_{j \leq J} \delta_{z_{i_{j}}} \leq \sum_{i; v_i < \zeta} \delta_{z_i}, \Big]
   \geq \mathbb{Q} \Big[ G_{J} \leq \zeta \Big].
 \end{equation*}
\end{corollary}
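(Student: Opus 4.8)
The plan is to upgrade the probabilistic inequality to an almost-sure set inclusion: I will show that, up to a $\mathbb{Q}$-null event,
\[
  \{G_J \leq \zeta\} \;\subseteq\; \Big\{ \sum_{j \leq J} \delta_{z_{i_j}} \leq \sum_{i;\, v_i < \zeta} \delta_{z_i} \Big\},
\]
after which monotonicity of $\mathbb{Q}$ gives the stated bound at once.

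First I would extract the two structural facts furnished by the soft-local-time construction \eqref{e:xis} together with Proposition~\ref{p:poisson} (itself obtained by iterating Proposition~\ref{p:simulate}): almost surely the selected indices $i_1, \dots, i_J$ are pairwise distinct, and for every $j \in \{1, \dots, J\}$ the $j$-th caught point lies exactly on the soft local time at stage $j$, i.e.\ $v_{i_j} = G_j(z_{i_j})$. Second, I would record the elementary monotonicity $G_1(z) \leq G_2(z) \leq \dots \leq G_J(z)$ for every $z \in \Sigma$, which is immediate from $G_k(z) = \xi_1 g_1(z) + \dots + \xi_k g_k(z)$ and $\xi_k, g_k \geq 0$.

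Combining these, on the event $\{G_J \leq \zeta\}$ one has, for each $j \leq J$,
\[
  v_{i_j} \;=\; G_j(z_{i_j}) \;\leq\; G_J(z_{i_j}) \;\leq\; \zeta .
\]
Since the intensity measure $\mu \otimes \mathrm{d}v$ assigns zero mass to the slice $\Sigma \times \{\zeta\}$, almost surely no atom of $\m$ has height exactly $\zeta$, so $\{i;\, v_i \leq \zeta\} = \{i;\, v_i < \zeta\}$; hence $\{i_1, \dots, i_J\} \subseteq \{i;\, v_i < \zeta\}$. As the $i_j$ are distinct, $\sum_{j \leq J} \delta_{z_{i_j}} = \sum_{i \in \{i_1,\dots,i_J\}} \delta_{z_i} \leq \sum_{i;\, v_i < \zeta} \delta_{z_i}$, which is the claimed inclusion. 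The same argument run with every point measure intersected with a fixed set $D$ yields the $\1{D}$-localized form actually invoked in \eqref{e:coupleGG}.

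I do not expect a genuine obstacle: the corollary is essentially a reformulation of the selection rule of Proposition~\ref{p:poisson}, the only substantive inputs being the monotonicity of $G_j$ in $j$ and the disjointness of the caught indices. The one point needing a word of care is the strict inequality $v_{i_j} < \zeta$ demanded on the right-hand side against the non-strict $G_J \leq \zeta$ on the left, and this is disposed of by the non-atomicity of the Lebesgue factor $\mathrm{d}v$ of the intensity, as above.
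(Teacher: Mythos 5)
Your argument is correct, and it is exactly the deduction the paper intends: the corollary is stated without a written proof as an immediate consequence of Proposition~\ref{p:poisson}, and your steps (distinctness of the caught indices, $v_{i_j}=G_j(z_{i_j})\leq G_J(z_{i_j})$ by nonnegativity of the $\xi_k g_k$, and the a.s.\ absence of atoms at height exactly $\zeta$ to pass to the strict inequality) fill in precisely that intended reasoning.
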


Note that the right-hand side of the above bound only depends on the soft local time, which may be estimated for instance, through large deviation bounds.

\bibliographystyle{plain}
\bibliography{../BibTeX/all}

\end{document}